\definecolor{darkred}{RGB}{100,0,0}
\definecolor{darkgreen}{RGB}{0,100,0}
\definecolor{darkblue}{RGB}{0,0,150}
\def\haus{{\sf d}_{\rm H}}
\def\scl{\textsc{l}}
\def\sck{\textsc{k}}
\def\ring{A}
\def\supp{{\rm supp}}
\def\level{\cL}
\def\up{\cU}
\def\d{{\rm d}}
\def\ms{{\sf MS}}
\def\ball{B}
\newtheorem{thm}{Theorem}[section]
\newtheorem{theorem}{Theorem}[section]
\newtheorem{lem}[thm]{Lemma}
\newtheorem{lemma}[thm]{Lemma}
\theoremstyle{remark}
\newtheorem{remark}[thm]{Remark}
\newtheorem{property}{Property}
\def\beq{\begin{equation}} 
\def\eeq{\end{equation}}
\def\beqn{\begin{eqnarray*}}
\def\eeqn{\end{eqnarray*}}
\def\Bitem{\begin{itemize}\setlength{\itemsep}{.2in}}
\def\bitem{\begin{itemize}\setlength{\itemsep}{.05in}}
\def\eitem{\end{itemize}}
\def\Benum{\begin{enumerate}\setlength{\itemsep}{.2in}}
\def\benum{\begin{enumerate}\setlength{\itemsep}{.05in}}
\def\eenum{\end{enumerate}}
\def\bmult{\begin{multline*}}
\def\emult{\end{multline*}}
\def\bcenter{\begin{center}}
\def\ecenter{\end{center}}
\def\bframe{\begin{frame}}
\def\eframe{\end{frame}}
\newcommand{\thmref}[1]{Theorem~\ref{thm:#1}}
\newcommand{\propertyref}[1]{Property~\ref{property:#1}}
\newcommand{\lemref}[1]{Lemma~\ref{lem:#1}}
\newcommand{\secref}[1]{Section~\ref{sec:#1}}
\DeclareMathOperator*{\argmax}{arg\, max}
\DeclareMathOperator*{\argmin}{arg\, min}
\def\cA{\mathcal{A}}
\def\cB{\mathcal{B}}
\def\cC{\mathcal{C}}
\def\cF{\mathcal{F}}
\def\cH{\mathcal{H}}
\def\cL{\mathcal{L}}
\def\cM{\mathcal{M}}
\def\cS{\mathcal{S}}
\def\cT{\mathcal{T}}
\def\cU{\mathcal{U}}
\def\cV{\mathcal{V}}
\def\cY{\mathcal{Y}}
\def\cZ{\mathcal{Z}}
\def\bbR{\mathbb{R}}
\def\eps{\varepsilon}
\def\1{\mathbbm{1}}
\definecolor{purple}{rgb}{0.4,.1,.9}
\begin{document}
\thispagestyle{empty}

\title{Clustering by Hill-Climbing: Consistency Results}
\author{
Ery Arias-Castro\footnote{University of California, San Diego, California, USA (\url{https://math.ucsd.edu/\~eariasca/})} 
\and 
Wanli Qiao\footnote{George Mason University, Fairfax, Virginia, USA (\url{https://mason.gmu.edu/\~wqiao/})}
}
\date{}
\maketitle

\begin{abstract}
We consider several hill-climbing approaches to clustering as formulated by \citet{fukunaga1975} in the 1970's. We study both continuous-space and discrete-space (i.e., medoid) variants and establish their consistency. 

\medskip\noindent
{\em Keywords and phrases:}
clustering; gradient lines; gradient flow; dynamical systems; ordinary differential equations; Euler scheme; Morse theory; Mean Shift; Max Shift; Max Slope Shift; hill-climbing methods for clustering
\end{abstract}

\section{Introduction} 
\label{sec:introduction}

Clustering methods based on `climbing' the density landscape date back to the 1970's, in particular, to work by K.~Fukunaga and his collaborators.\footnote{~Hill-climbing often refers to greedy approach to optimizing an objective function. Such strategies have been suggested in the context of clustering, including the algorithm of \citet{kernighan1970efficient}, the $k$-means algorithms proposed by \citet{lloyd1982least} and \citet{hartigan1979ak}, and even the EM algorithm of \citet{dempster1977maximum} when used to fit a mixture distribution. In the present paper, we reserve this term for approaches that climb the landscape defined by a density.}
Indeed, in 1975, \citet{fukunaga1975} proposed to ``assign each observation to the nearest mode along the direction of the gradient". Formally, the gradient ascent line starting at a point $x$ is the curve given by the image of $\gamma_x$, the parameterized curve defined by the the following ordinary differential equation (ODE)
\begin{equation} \label{gradient flow}
\gamma_x(0) = x; \quad
\dot\gamma_x(t) = \nabla f(\gamma_x(t)), \quad t \ge 0.
\end{equation} 
In this fashion, a point $x$ is assigned to the (critical) point at the end of the gradient line above, meaning $\gamma_x(\infty)$. 
\citet{fukunaga1975} then added that ``To accomplish this, one could move each observation a small step in the direction of the gradient and iteratively repeat the process on the transformed observations until tight clusters result near the modes." This led them to propose what is known in numerical analysis as a forward Euler scheme (with step size being $\rho > 0$ here):
\begin{equation} \label{euler}
x(0) = x; \quad
x(k+1) = x(k) + \rho \nabla f(x(k)), \quad k \ge 0.
\end{equation}
Under some standard conditions, the scheme is consistent in that, as $\rho \to 0$, the sequence converges in an appropriate sense to the gradient line $\gamma_x$. 
We will refer to this scheme as the {\em Euler Shift}.

In practice, the density needs to be estimated, and this is often done by kernel density estimation. This was already considered in \cite{fukunaga1975}, and the name `mean shift' comes from the fact that when using a kernel density estimator, the gradient of that estimator is proportional to the shift in mean with respect to another kernel --- what \citet{cheng1995} called a `shadow' of the kernel used to estimate the density.
In the actual implementation proposed by \citet{fukunaga1975} --- which is nowadays known as {\em Blurring Mean Shift} --- at each iteration all the sample points are moved and the density estimate is recomputed based on the new locations. \citet{cheng1995} contrasted this with what he calls {\em Mean Shift}, where instead the density is estimated based on the original sample before the sample points are moved by mean shift as described above. 
We note that in both implementations the points are moved by successive mean shifts, which differs from applying the Euler scheme above. 

The approach to clustering advocated by \citet{fukunaga1975} has generated a good amount of enthusiasm over the past decades, leading to a number of methods. 
\citet{carreira2015clustering} provides a fairly recent review of this work.
Among them, arguably the simplest variant is what we call {\em Max Shift}, where at each step the location is changed to a point in the neighborhood with largest density value:
\begin{equation} \label{max shift density}
x(0) = x_0; \quad
x(k+1) \in \argmax_{x \in \bar\ball(x(k), \eps)} f(x), \quad k \ge 0.
\end{equation}
Ties are broken in an arbitrary but deterministic way (for simplicity), and if $f(x(k+1)) = f(x(k))$, the process stops.
The parameter $\eps > 0$ defines the size of neighborhood around the present location where the maximization takes places to compute the next location in the sequence.
This is effectively embedded in a method proposed by \citet{chazal2013persistence}, which they called {\em ToMATo} (for Topological Mode Analysis Tool) --- although the overall approach is more sophisticated and also includes some merging of attraction basins based on (topological) persistence considerations.

In the present paper, we establish the consistency of Euler Shift, Mean Shift, Max Shift, and a few other variants (including a regularized version of the one proposed in \cite{koontz1976graph}) in a concise and comprehensive manner. To be clear, consistency refers to the task of clustering in the sense of \citet{fukunaga1975}, where points are grouped according to the critical points where the density gradient ascent flow \eqref{gradient flow} leads them.
By doing this, we contribute to the building of a mathematical foundation for this type of clustering methods, and adds to existing work in that area.
For one, Euler Shift is known to be consistent \cite{arias2016estimation, comaniciu2002mean, carreira2000mode}, and there is a surrounding literature on the problem of estimating the gradient lines of a density \cite{cheng2004estimating} and even density ridges \cite{genovese2014nonparametric, qiao2016theoretical}.
Further, via the shadow kernel concept of \citet{cheng1995}, Mean Shift can be directly related to Euler Shift and thus proven to be also consistent.

The remainder of the paper is organized as follows.
In \secref{setting}, we introduce the framework and some concepts and notation.
In \secref{prototype}, we introduce a prototypical hill-climbing algorithm and establish its consistency. We then specialize this to several variants. 
In \secref{methods}, we establish the large-sample consistency of the corresponding methods.
Throughout the paper, we distinguish between {\em algorithm} --- defined based on a given density function --- and {\em method} --- an algorithm applied to an estimate of the density based on an iid sample from that density.
In \secref{medoids}, we consider medoid variants of the previous algorithms where the sequence is constrained to be made of a given set of points (the sample points in practice).
We discuss some open problems in \secref{discussion} and gather some additional technical results in the \hyperref[sec:auxiliary]{Appendix}.

\section{Setting}
\label{sec:setting}
We lay in this section the foundations, starting with our assumptions on the underlying density --- which are very standard for this literature --- and introducing some basic concepts regarding the gradient flow it defines. We also discuss the role of the point set $\cY$ that appears in \eqref{max slope shift} and \eqref{max shift}.

\subsection{The density $f$}
\label{sec:density}
Throughout, we consider a density with respect to the Lebesgue measure on $\bbR^d$ --- denoted $f$ everywhere --- assumed to satisfy the following conditions:
\begin{itemize}
\item {\em Zero at infinity.}
$f$ converges to zero at infinity, meaning, $f(x) \to 0$ as $\|x\| \to \infty$.
\item {\em Twice differentiable.} 
$f$ is twice differentiable everywhere with bounded and uniformly continuous zeroth, first, and second derivatives.
\item 
{\em Non-degenerate critical points.} 
The Hessian is non-singular at every critical point of $f$.
\end{itemize}
The first condition is equivalent to $f$ having bounded upper level sets, meaning that $\up_s := \{f \ge s\}$ is bounded, and therefore compact by continuity of $f$, for any $s > 0$.
The second condition is simply a smoothness assumption on the density. (Note that the uniform continuity of the zeroth and first derivatives is implied by the fact that the second derivative exits everywhere and is bounded.)
And a function that satisfies the third condition is sometimes referred to as a {\em Morse function} \citep{milnor1963morse}.
Similar conditions are standard in the literature cited in the \hyperref[sec:introduction]{Introduction}. 

Define
\begin{align}
\label{kappa}
\kappa_0 = \sup_x f(x), \quad
\kappa_1 = \sup_x \|\nabla f(x)\|, \quad 
\kappa_2 = \sup_x \|\nabla^2 f(x)\|.
\end{align}
Then $f$ is $\kappa_1$-Lipschitz, meaning
\begin{align}
\label{kappa1}
|f(y) - f(x)| \le \kappa_1 \|y-x\|, \quad \forall x, y,
\end{align}
and $\nabla f$ is $\kappa_2$-Lipschitz, meaning
\begin{align}
\label{kappa2}
\|\nabla f(y) - \nabla f(x)\| \le \kappa_2 \|y-x\|, \quad \forall x, y,
\end{align}
and the following Taylor expansion holds
\begin{align}
\label{taylor2}
\big|f(y) - f(x) - \nabla f(x)^\top (y-x)\big|
\le \tfrac12 \kappa_2 \|y-x\|^2, \quad \forall x, y.
\end{align}

We will denote $N(x) := \nabla f(x)/\|\nabla f(x)\|$, which is well-defined whenever $\nabla f(x) \ne 0$, and in fact differentiable, with derivative equal to 
\begin{align}
\label{DN}
DN(x) = \frac{\nabla^2 f(x)}{\|\nabla f(x)\|} - \frac{\nabla f(x) \nabla f(x)^\top \nabla^2 f(x)}{\|\nabla f(x)\|^2}. 
\end{align}

In most of the paper, we will assume that the density is available. In practice, of course, it needs to be estimated, and this is most often done via kernel density estimation. Using some stability results implying roughly speaking that things do not change much if the estimation is accurate enough, we will port the consistency results established for the setting where the density is known to the setting where it is estimated.
We differentiate between {\em algorithm}, which is applied with knowledge of the density; and {\em method}, which is applied without knowledge of the density except for an estimate of the density, typically derived from a sample. This terminology may not be standard, but we find it useful in the confined setting of the present work.

By {\em population} we mean the support of the density, which we will denote by $\supp f$ on occasion. When we talk of a sample, we will assume it to be generated iid from $f$.

\subsection{Gradient lines and gradient flow}
\label{sec:setting gradient}
Under the above conditions on $f$, $\nabla f$ is Lipschitz, and this is enough for standard theory for ODEs \citep[Sec 17.1]{hirsch2012differential} to justify the definition in \eqref{gradient flow} of the gradient ascent line $\gamma_x$ originating at any point $x$. This, and the fact that this is a gradient flow \citep[Sec 9.3]{hirsch2012differential}, gives the following.

\begin{lemma}
\label{lem:gradient flow}
For any $x$, the function $\gamma_x$ defined in \eqref{gradient flow} is well-defined on $[0,\infty)$, with $\gamma_x(t)$ converging to a critical point of $f$ as $t \to \infty$.
\end{lemma}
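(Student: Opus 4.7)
The plan is to combine standard ODE theory for global existence with a Lyapunov/$\omega$-limit argument, using $f$ itself as the Lyapunov function, and then to invoke the Morse (non-degeneracy) hypothesis at the very end to upgrade a LaSalle-type conclusion into actual convergence. For the first half of the statement I would appeal to the Picard--Lindel\"of theorem: by \eqref{kappa2}, the vector field $\nabla f$ is globally $\kappa_2$-Lipschitz, so there is a unique local $C^1$ solution to \eqref{gradient flow}; since $\|\dot\gamma_x(t)\| = \|\nabla f(\gamma_x(t))\| \le \kappa_1$, the solution cannot escape to infinity in finite time and therefore extends to all of $[0,\infty)$.

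For the second half, I would start from the energy identity
\begin{equation}
\tfrac{d}{dt} f(\gamma_x(t)) \;=\; \nabla f(\gamma_x(t))^\top \dot\gamma_x(t) \;=\; \|\nabla f(\gamma_x(t))\|^2 \;\ge\; 0,
\end{equation}
so $t \mapsto f(\gamma_x(t))$ is non-decreasing and bounded above by $\kappa_0$, hence tends to some finite limit $c$. If $\nabla f(x) = 0$ (which happens in particular whenever $f(x) = 0$, since $f \ge 0$ forces any zero to be a minimum), uniqueness makes $\gamma_x$ constant at the critical point $x$ and there is nothing more to show. Otherwise $c \ge f(x) > 0$, the trajectory is confined to the compact upper level set $\up_{f(x)}$, and its $\omega$-limit set $\omega(x) := \bigcap_{T \ge 0} \overline{\gamma_x([T,\infty))}$ is nonempty, compact, and connected as a nested intersection of nonempty compact connected sets in $\bbR^d$. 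By continuity $f \equiv c$ on $\omega(x)$; positive invariance of $\omega(x)$ under the autonomous flow then implies that for any $y \in \omega(x)$ the entire orbit $\gamma_y$ sits inside $\omega(x)$, so $f \circ \gamma_y$ is constantly $c$ and evaluating the derivative at $0$ gives $\|\nabla f(y)\|^2 = 0$. Hence $\omega(x)$ is contained in the critical set of $f$.

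The main obstacle, and the point at which the Morse hypothesis is essential, is promoting this inclusion to convergence of $\gamma_x(t)$ to a single point rather than along a subsequence. Non-singularity of the Hessian at each critical point, via the inverse function theorem applied to $\nabla f$, shows that critical points are isolated, so the critical set of $f$ is discrete. A connected subset of a discrete set must be a singleton, so $\omega(x) = \{x^*\}$ for a unique critical $x^*$, and by definition of the $\omega$-limit set this gives $\gamma_x(t) \to x^*$ as $t \to \infty$.
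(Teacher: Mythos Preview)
Your argument is correct and follows the standard route the paper has in mind: the paper does not actually write out a proof of this lemma but simply cites Picard--Lindel\"of (via the Lipschitz bound \eqref{kappa2}) for global existence and standard gradient-flow theory \citep[Sec~9.3]{hirsch2012differential} for convergence to a critical point. Your proposal is a careful unpacking of exactly those two ingredients, with the $\omega$-limit/LaSalle step and the use of the Morse hypothesis to pass from a connected set of critical points to a singleton made explicit; nothing in your write-up deviates from or exceeds what the cited references supply.
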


The {\em basin of attraction} of a point $x_*$ is defined as $\{x: \gamma_x(\infty) = x_*\}$. Note that this set is empty unless $x_*$ is a critical point (i.e., $\nabla f(x_*) = 0$).
In the gradient line view of clustering, we call a cluster any basin of attraction of a mode.
It turns out that, if $f$ is a Morse function \citep{milnor1963morse}, then all these basins of attraction, sometimes called stable manifolds, provide a partition of the support up to a set of zero measure.

\begin{lemma}
\label{lem:basins}
Under the assumed regularity conditions, the basins of attraction of the local maxima, by themselves, cover the population, except for a set of zero measure.
\end{lemma}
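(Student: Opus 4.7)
The plan is to combine Lemma~\ref{lem:gradient flow} with standard dynamical systems tools, notably the stable manifold theorem. Lemma~\ref{lem:gradient flow} guarantees that every $x \in \bbR^d$ belongs to the basin $B(x_*) := \{y : \gamma_y(\infty) = x_*\}$ of some critical point $x_*$ of $f$. The Morse condition makes critical points isolated, and the zero-at-infinity condition makes each level set $\{f \ge 1/n\}$ compact, so the set $C$ of critical points with $f > 0$ is at most countable. Split $C = M \cup N$ into local maxima and non-maxima (local minima and saddles). Because a countable union of null sets is null, it suffices to prove that $B(x_*)$ has Lebesgue measure zero for every $x_* \in N$.

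For this key step, the bounds \eqref{kappa} and \eqref{kappa2} make $\nabla f$ bounded and globally Lipschitz, so by standard ODE theory the flow $\phi_t$ of \eqref{gradient flow} extends to all $t \in \bbR$, with every $\phi_t$ a $C^1$-diffeomorphism of $\bbR^d$. At any $x_* \in N$ the Hessian $\nabla^2 f(x_*)$ is non-singular and possesses at least one positive eigenvalue (otherwise it would be negative definite and $x_*$ would be a local max), so $x_*$ is a hyperbolic equilibrium of the vector field $\nabla f$. The stable manifold theorem then produces a local stable manifold $W^s_\mathrm{loc}(x_*)$ that is a $C^1$-embedded submanifold tangent to the stable eigenspace of $\nabla^2 f(x_*)$, of dimension equal to the Morse index of $x_*$; this dimension is at most $d-1$. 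Since any orbit converging to $x_*$ enters $W^s_\mathrm{loc}(x_*)$ after some finite time, we can write $B(x_*) = \bigcup_{n \ge 1} \phi_{-n}(W^s_\mathrm{loc}(x_*))$, which is a countable union of $C^1$-diffeomorphic images of a submanifold of dimension at most $d-1$, hence has Lebesgue measure zero.

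Taking the union over $x_* \in N$ gives that $\bigcup_{x_* \in N} B(x_*)$ is null, and its complement inside $\supp f$ is covered by basins of local maxima, which is the claim. I expect the main obstacle to be the stable manifold step: one must carefully invoke the stable manifold theorem for hyperbolic equilibria of $C^1$ vector fields (our $C^2$ assumption on $f$ suffices), identify the dimension of $W^s_\mathrm{loc}$ with the Morse index of $x_*$, and use global existence and uniqueness of the flow to bootstrap the local null-set statement into a statement about the entire basin of attraction.
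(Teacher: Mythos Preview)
Your proposal is correct and follows essentially the same route as the paper: both show that the basin of attraction of each non-maximum critical point has Lebesgue measure zero because it is (contained in) a submanifold of dimension at most $d-1$, and then take a countable union over the discrete set of such critical points. The only difference is cosmetic --- the paper invokes a result from the Morse-theory literature directly for the global stable manifold, whereas you reconstruct it from the local stable manifold theorem and the backward flow $\phi_{-n}$.
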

Indeed, by \lemref{gradient flow}, the basins of attraction partition the entire population. In addition, the set of critical points is discrete \citep[Cor~3.3]{banyaga2013lectures}, the basin of attraction of each critical point that is not a local maximum is a (differentiable) submanifold of co-dimension at least one\footnote{ The requirement in that theorem that the function be compactly supported is clearly not essential.} \cite[Th~4.2]{banyaga2013lectures}, and therefore has zero Lebesgue measure.
For more background on Morse functions and their use in statistics, see the recent articles of \citet{chacon2015population} and \citet{chen2017statistical}.

The following lemma gives the continuity of the gradient flow curve with respect to the Hausdorff metric when seen as a subset of $\bbR^d$ indexed by the starting point, which can be of independent interest --- its proof is given in the \hyperref[sec:auxiliary]{Appendix}. We note that this result is stronger than the well-known continuity of trajectories of gradient flows with respect to the starting points \emph{up to a fixed time point}. See, for example, the main theorem in \citep[Sec 17.3]{hirsch2012differential}. Let $\haus$ denote the Hausdorff metric. 
\begin{lemma}
\label{lem:full continuity}
The gradient ascent flow, seen here as the function $x \mapsto \gamma_x([0, \infty))$ mapping $(\bbR^d, \|\cdot\|)$ to $(2^{\bbR^d}, \haus)$, is continuous in the basin of attraction of a mode. 
\end{lemma}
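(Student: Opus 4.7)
The plan is to split each trajectory into a bounded-time initial segment, on which the classical continuous-dependence-on-initial-conditions theorem for ODEs applies, and an infinite tail that stays trapped in an arbitrarily small forward-invariant neighborhood of the limiting mode. The function $f$ itself will serve as a Lyapunov function to produce such neighborhoods.

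Fix $x_0$ in the basin of attraction of a mode $m$ and let $\epsilon > 0$. Because $\nabla^2 f(m)$ is negative definite, there is a neighborhood $V$ of $m$ on which $m$ is the unique critical point of $f$ and a strict local maximum. For small $r > 0$, set
\[
s(r) = f(m) - \tfrac12 \min_{\|x-m\|=r}\bigl(f(m)-f(x)\bigr) < f(m),
\]
well-defined by compactness of the sphere and the strict-max property, and let $U(r)$ denote the connected component of $\{f \ge s(r)\}$ containing $m$. For $x$ with $\|x-m\|=r$ one has $f(m) - f(x) \ge 2(f(m) - s(r)) > f(m) - s(r)$, hence $f(x) < s(r)$; thus $\{f \ge s(r)\}$ is disjoint from $\partial B(m,r)$, so by connectedness $U(r) \subset B(m,r)$. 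Moreover, $U(r)$ is forward-invariant under the gradient flow: $f$ is non-decreasing along any trajectory, so once a trajectory enters $U(r)$ it cannot drop below level $s(r)$ thereafter, and by continuity it cannot jump to a different connected component of $\{f \ge s(r)\}$; hence it remains in $U(r)$ for all later times.

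Apply this with $r = \epsilon/2$, pick a slightly higher level $s' > s(r)$ whose corresponding component $U'$ lies in the interior of $U(r)$, and use \lemref{gradient flow} to find $T < \infty$ with $\gamma_{x_0}(T) \in U'$. Since $\nabla f$ is Lipschitz, the standard continuous-dependence theorem for the ODE \eqref{gradient flow} on the compact interval $[0,T]$ (see \citep[Sec~17.3]{hirsch2012differential}) yields $\delta > 0$ such that $\|y-x_0\| < \delta$ implies both $\sup_{t \in [0,T]}\|\gamma_y(t) - \gamma_{x_0}(t)\| < \epsilon$ and $\gamma_y(T) \in U(r)$. Forward-invariance then gives $\gamma_y([T,\infty)) \cup \gamma_{x_0}([T,\infty)) \subset U(r) \subset B(m,\epsilon/2)$. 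Every point of $\gamma_{x_0}([0,T])$ has a partner within $\epsilon$ on $\gamma_y([0,T])$ via matched times (and symmetrically), while any pair of tail points lies within $\epsilon$ of each other through $m$. This bounds the Hausdorff distance by $\epsilon$, establishing continuity at $x_0$.

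The main obstacle, and the only substantive step beyond the classical finite-time ODE stability theorem, is the Lyapunov--invariance argument producing arbitrarily small forward-invariant neighborhoods of $m$. It depends essentially on the non-degeneracy of $\nabla^2 f(m)$ in the Morse hypothesis; without it, the superlevel sets accumulating at $m$ could fail to shrink in diameter as $s \uparrow f(m)$, and the conclusion could fail.
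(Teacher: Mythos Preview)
Your argument is correct and follows the same overall architecture as the paper's proof: split the trajectory into a finite-time initial segment handled by the classical continuous-dependence theorem, and a tail trapped in an arbitrarily small forward-invariant neighborhood of the mode. Your implementation is in fact cleaner than the paper's in two respects. First, you work directly with the flow $\gamma_x$ of $\nabla f$, which is globally $\kappa_2$-Lipschitz, so continuous dependence on $[0,T]$ is immediate; the paper instead uses the arc-length flow $\zeta_x$ driven by the normalized field $N$, which is not globally Lipschitz, and therefore must first construct a tubular neighborhood $\cV(x)$ of the trajectory on which $N$ is Lipschitz before the dependence theorem can be invoked. Second, your forward-invariant neighborhoods are level-cluster components, whose invariance follows at once from the monotonicity of $f$ along the flow; the paper instead differentiates $\|\zeta_x(t)-x_*\|^2$ and uses the negative-definiteness of the Hessian to show the distance to $x_*$ is strictly decreasing. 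The trade-off is that you obtain only continuity, whereas the paper extracts a quantitative local bound $\haus(\cZ_{t_\#}(x),\cZ_{t_\#}(y)) \le C_3(x)\|x-y\|$ along the way, and that Lipschitz-type estimate is precisely what is reused in the proof of \lemref{nu_continuity}. For the lemma as stated, however, your argument is sufficient.
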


For the sake of clarity, we will sometimes work with the gradient line parameterized by arc length. Equivalently, for a point $x$, this means considering the gradient flow of $N$, or more explicitly, 
\begin{equation} \label{zeta}
\zeta_x(0) = x; \quad
\dot\zeta_x(t) 
= N(\zeta_x(t)).
\end{equation}
Note that $\zeta_x$ is only defined over $[0,\ell_x]$, where $\ell_x$ is the length of the gradient ascent line originating at $x$ (meaning the length of $\gamma_x$).
In view of our assumptions on $f$, 
assuming $x$ is not a critical point, $\zeta_x$ is twice continuously differentiable on $[0,\ell_x)$ with
\begin{align}
\label{zeta2} 
\ddot \zeta_x(t) 
= DN(\zeta_x(t)) \dot \zeta_x(t), \quad \text{with $DN$ given in \eqref{DN}.}
\end{align}

\subsection{Level sets}
For a positive real number $s > 0$, the $s$-level set of $f$ is given by
\begin{equation}
\label{level}
\level_s := \{x : f(x) = s\}.
\end{equation}
while the $s$-upper level set of $f$ is given by
\begin{equation}
\label{up}
\up_s := \{x : f(x) \ge s\}.
\end{equation}
Throughout, whether specified or not, we will only consider levels that are in $(0, \kappa_0)$. Note that, because $f$ converges to zero at infinity and is continuous, its (upper) level sets are compact.
We call any connected component of an upper level set a {\em level cluster}. 
This is in congruence with the level set definition of cluster offered by \citet{hartigan1975clustering}.
Hartigan also defined what is now known as the {\em cluster tree}, which is the partial ordering between clusters that comes with the set inclusion operation: indeed, when two level clusters intersect, one of them must contain the other.
We say that a level cluster $\cC$ is a {\em leaf (level) cluster} if the cluster tree does not branch out past $\cC$, or said differently, if all the descendants of $\cC$ have at most one child. Note that the last descendant of a leaf cluster is a singleton defined by a mode.
For a point $x$ and $s \le f(x)$, let $\cC_s(x)$ denote the $s$-level cluster that contains $x$, and $\cC(x)$ will be shorthand for $\cC_{f(x)}(x)$. 

There is a large amount of literature on the estimation of level sets and on the estimation of the cluster tree, but in the present paper we will only use some basic results, including the following.

\begin{lemma}
\label{lem:cluster mode}
Any level cluster contains at least one mode. Moreover, a mode coincides with the intersection of all the level clusters that contain that mode.
\end{lemma}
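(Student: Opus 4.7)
For the first assertion, I would take a level cluster $\cC$, namely a connected component of some upper level set $\up_s$. Since $f$ vanishes at infinity, $\up_s$ is compact, so $\cC$ is compact and $f$ attains its maximum on $\cC$ at some point $x_*$. The key step is to show that $x_*$ is necessarily a local maximum of $f$. Arguing by contradiction, if $\nabla f(x_*) \neq 0$, then $f$ strictly increases along the gradient ascent curve $\gamma_{x_*}(t)$ for small $t > 0$ (since $\tfrac{d}{dt}f(\gamma_{x_*}(t)) = \|\nabla f(\gamma_{x_*}(t))\|^2$), so a short arc of $\gamma_{x_*}$ remains in $\up_s$; being connected and containing $x_*$, this arc lies in $\cC$, contradicting the maximality of $f(x_*)$. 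If instead $\nabla f(x_*) = 0$, the Morse non-degeneracy assumption together with $x_*$ not being a local maximum forces the Hessian to have a positive eigenvalue, and the Morse lemma then supplies a smooth arc out of $x_*$ along which $f$ strictly increases, so the same connectedness argument yields the same contradiction.

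For the second assertion, let $x_*$ be a mode. The inclusion $x_* \in \bigcap_{s \le f(x_*)} \cC_s(x_*)$ is immediate. For the reverse, my plan is to show that $\cC_s(x_*)$ shrinks to $\{x_*\}$ as $s \uparrow f(x_*)$, which forces every element of the intersection to equal $x_*$. Because $x_*$ is a non-degenerate local maximum, the Morse lemma yields $\delta_0 > 0$ with $f(y) < f(x_*)$ for all $y \in \bar\ball(x_*, \delta_0) \setminus \{x_*\}$. Given $\delta \in (0, \delta_0)$, I claim there exists $s_0 < f(x_*)$ such that $\cC_s(x_*) \subseteq \ball(x_*, \delta)$ whenever $s \in (s_0, f(x_*))$. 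If not, sequences $s_n \uparrow f(x_*)$ and $y_n \in \cC_{s_n}(x_*)$ with $\|y_n - x_*\| \ge \delta$ would exist; connectedness of $\cC_{s_n}(x_*)$, together with its meeting both the interior and the complement of $\ball(x_*, \delta)$, forces it to touch the sphere $\partial \ball(x_*, \delta)$ at some $z_n$ with $f(z_n) \ge s_n$, and a subsequential limit $z_*$ on the sphere satisfies $f(z_*) \ge f(x_*)$ by continuity, contradicting the strict-max property on $\bar\ball(x_*, \delta_0)$. Hence any $y$ in the intersection lies in $\ball(x_*, \delta)$ for every $\delta \in (0, \delta_0)$, so $y = x_*$.

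The main subtlety in both parts is the connectedness step: in Part 1, the small arc out of $x_*$ (produced by gradient flow or the Morse normal form) lies in $\cC$ because it is a connected subset of $\up_s$ sharing the point $x_*$ with $\cC$; in Part 2, a connected set meeting both the interior and the exterior of a closed ball must touch its boundary sphere, which is elementary. Both arguments rely essentially on the Morse non-degeneracy assumption, via the Morse lemma, to extract strict-increase directions at non-maximal critical points in Part 1 and the strict-local-max property at $x_*$ in Part 2.
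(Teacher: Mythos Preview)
The paper states this lemma without proof, so there is no in-paper argument to compare against. Your argument is correct on both parts.

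Two minor remarks. In Part~1, the full Morse lemma is more than you need when $\nabla f(x_*)=0$: a second-order Taylor expansion along an eigenvector $v$ of $\nabla^2 f(x_*)$ with positive eigenvalue already gives $f(x_*+tv)>f(x_*)\ge s$ for all small $t\ne 0$, and the segment $\{x_*+tv:0\le t\le t_0\}$ is the connected arc in $\up_s$ that contradicts maximality. In Part~2, your shrinking argument via the boundary-sphere touching is sound; alternatively, the paper's very next lemma (\lemref{cluster ball}) supplies the quantitative inclusion $\cC_s(x_*)\subset\ball(x_*,C\sqrt{s_*-s})$ for $s$ near $s_*=f(x_*)$, from which the second assertion is immediate.
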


The following is a slightly different version of \cite[Lem 5.9]{arias2021asymptotic}.
\begin{lemma}
\label{lem:cluster ball}
Let $x_*$ be a mode of $f$ and let $s_* = f(x_*)$. Then there is a constant $\delta>0$ such that $C \equiv C_\delta = \sup_{x\in\bar\ball(x_*,\delta)}\max\{\sqrt{\lambda_{\max}(x)},2/\sqrt{\lambda_{\min}(x)}\}$ is finite, where $\lambda_{\max}(x)$ and $\lambda_{\min}(x)$ are the largest and smallest eigenvalues of $-\nabla^2 f(x)$, respectively; and 
\begin{align}
\label{Ct_bounds}
\bar\ball(x_*, \tfrac1C \sqrt{s_* - s}) \subset \cC_s \subset \ball(x_*, C \sqrt{s_* - s}), \quad \text{for all }  s\in(s_* - \delta^2/C^2, s_*).
\end{align}
\end{lemma}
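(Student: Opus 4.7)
The plan is to reduce the whole statement to a two-sided Taylor expansion around $x_*$ and a connectedness argument that keeps $\cC_s$ trapped inside a neighborhood where that expansion is valid. Since $x_*$ is a non-degenerate local maximum, $-\nabla^2 f(x_*)$ is positive definite, so both $\lambda_{\min}(x_*)$ and $\lambda_{\max}(x_*)$ are positive. By the assumed continuity of the second derivative, I can pick $\delta > 0$ small enough that $-\nabla^2 f(x)$ stays positive definite on $\bar\ball(x_*, \delta)$; this makes $C = C_\delta$ finite. A quick check gives $C^2 \ge 2$, since for each $x$ one has $\max\{\sqrt{\lambda_{\max}(x)}, 2/\sqrt{\lambda_{\min}(x)}\}^2 \ge 2\sqrt{\lambda_{\max}(x)/\lambda_{\min}(x)} \ge 2$.

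Next, using $\nabla f(x_*) = 0$, Taylor's theorem with Lagrange remainder gives, for any $x \in \bar\ball(x_*, \delta)$, some $\xi$ on the segment $[x_*, x]$ with $s_* - f(x) = \tfrac12 (x-x_*)^\top [-\nabla^2 f(\xi)](x-x_*)$. Bounding the quadratic form by the extreme eigenvalues of $-\nabla^2 f(\xi)$ and then by $C$, I obtain
\begin{equation}
\tfrac{2}{C^2}\|x - x_*\|^2 \;\le\; s_* - f(x) \;\le\; \tfrac{C^2}{2}\|x - x_*\|^2, \qquad x \in \bar\ball(x_*, \delta).
\end{equation}
Now fix $s \in (s_* - \delta^2/C^2,\,s_*)$, so $s_* - s < \delta^2/C^2$. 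For the inner inclusion, if $\|x-x_*\| \le \tfrac{1}{C}\sqrt{s_*-s}$, then in particular $\|x-x_*\| < \delta$, so the upper bound above yields $s_* - f(x) \le \tfrac12(s_*-s) < s_*-s$, i.e. $f(x) > s$. Hence $\bar\ball(x_*, \tfrac{1}{C}\sqrt{s_*-s}) \subset \up_s$, and since this ball is connected and contains $x_*$, it lies in the connected component $\cC_s = \cC_s(x_*)$.

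The main obstacle is the outer inclusion, because a priori $\cC_s$ might escape the region where the Taylor expansion is valid; I resolve this by a barrier argument on the sphere $\partial \bar\ball(x_*, \delta)$. On that sphere, the \emph{lower} Taylor bound yields $f(x) \le s_* - 2\delta^2/C^2 < s_* - \delta^2/C^2 < s$, so $\partial \bar\ball(x_*, \delta) \cap \up_s = \emptyset$. Since $\cC_s$ is a connected subset of $\up_s$ containing the interior point $x_*$, it cannot cross this sphere and must therefore lie in $\bar\ball(x_*, \delta)$. With $\cC_s$ trapped there, the lower Taylor bound applies to every $x \in \cC_s$, giving $\tfrac{2}{C^2}\|x-x_*\|^2 \le s_* - f(x) \le s_* - s$, whence $\|x-x_*\| \le \tfrac{C}{\sqrt{2}}\sqrt{s_*-s} < C\sqrt{s_*-s}$, which is exactly the desired outer inclusion.
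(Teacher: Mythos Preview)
Your proof is correct. The paper itself does not supply a proof of this lemma; it simply states that it is ``a slightly different version of \cite[Lem 5.9]{arias2021asymptotic}'' and moves on, so there is nothing in the present paper to compare your argument against. Your approach --- choosing $\delta$ so that $-\nabla^2 f$ is uniformly positive definite on $\bar\ball(x_*,\delta)$, deriving the two-sided quadratic bounds $\tfrac{2}{C^2}\|x-x_*\|^2 \le s_*-f(x) \le \tfrac{C^2}{2}\|x-x_*\|^2$ from Taylor with Lagrange remainder, and then using the barrier $f|_{\partial\ball(x_*,\delta)} < s$ plus connectedness of $\cC_s$ to trap the level cluster where those bounds are valid --- is exactly the natural proof and matches what the cited reference does. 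The observation $C^2 \ge 2$ (so that the inner ball has radius $<\delta$ and the outer inclusion is strict) is a nice touch that makes all the inequalities go through cleanly.
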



\section{Consistency: Algorithms}
\label{sec:algorithms}

In this section we look at various hill-climbing clustering algorithms. As we indicated in the Introduction, an algorithm is defined based on an available density. 

We adopt in this paper the definition of clustering proposed by \citet{fukunaga1975}, where we ``assign each [point] to the nearest mode along the direction of the gradient". That is, we assign a point $x$ to $\gamma_x(\infty)$, where $\gamma_x$ is the gradient ascent line originating from $x$, defined in \eqref{gradient flow}, and $\gamma_x(\infty) := \lim_{t \to \infty} \gamma_x(t)$ is the endpoint where that line terminates. Consequently, the population --- meaning the support of the density --- is partitioned according to the basins of attraction of the density critical points.
As just discussed in \secref{setting gradient}, this definition is justified, and even though not all the critical points are modes, it is true by \lemref{basins} that the basins of attraction associated with modes are the ones that truly matter.

With the definition of clustering that we espouse here, we say that an algorithm is {\em consistent} if it moves almost any point $x$ in the support to $\gamma_x(\infty)$ when the neighborhood size is small enough. We make this more precise below.


\subsection{Prototype}
\label{sec:prototype} 

We start by discussing a prototypical hill-climbing algorithm that we prove to be consistent. We then show that a number of hill-climbing algorithms satisfy the same core properties, implying that these algorithms are consistent.

The prototypical algorithm that we consider, when initialized at some point in the support of the density, say $x_0$, produces a sequence, denoted $(x_k)$. The core properties we just alluded to are the following:

\begin{property}[\em The shifts are of comparable size]
\label{property:1}
For some positive function $S$, for all $k$, except perhaps for the last shift,
\begin{equation}
\label{shift size}
\eps S(\|\nabla f(x_k)\|)
\le \|x_{k+1} - x_k\| 
\le \eps.
\end{equation}
The function $S$ will be taken to be non-decreasing without loss of generality. The quantity $\eps$ can be made small by appropriately tuning the algorithm. 
\end{property}  

\begin{property}[\em The process converges to a mode when initialized in the vicinity of that mode]
\label{property:2}
Suppose that $x_*$ is a mode. Then there is $\delta > 0$ such that, for $\eps > 0$ small enough, if $x_0 \in \ball(x_*, \delta)$ then $(x_k)$ converges to $x_*$.
\end{property}

\begin{property}[\em The shifts are close to the gradient at the corresponding location]
\label{property:3}
For all shifts, except perhaps for the last shift,
\begin{equation}
\label{shift angle}
x_{k+1} - x_k
= \|x_{k+1} - x_k\| N(x_k) \pm \|x_{k+1} - x_k\| R(\|x_{k+1} - x_k\|, \|\nabla f(x_k)\|), 
\end{equation}
whenever $\nabla f(x_k) \ne 0$, where $(u,v) \mapsto R(u, v)$ is a continuous function that is increasing in $u$ and decreasing in $v$, and satisfies $R(0, v) = 0$ for all $v>0$. 
\end{property}
We note that the functions $S$ and $R$ may depend on the starting point $x_0$.

\begin{theorem}
\label{thm:prototype}
Consider an algorithm that satisfies the above properties.
Let $x_*$ denote a mode. Then for any point $x_0$ in the basin of attraction of $x_*$, when initialized at $x_0$ and with $\eps$ made small enough, the algorithm produces a sequence that converges to $x_*$.
\end{theorem}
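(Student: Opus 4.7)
The plan is to split the trajectory of $(x_k)$ into a macroscopic phase in which the iterates shadow the continuous gradient line $\gamma_{x_0}$, followed by a microscopic phase near $x_*$ in which Property~\ref{property:2} takes over. Let $\delta > 0$ be the constant supplied by Property~\ref{property:2} for $x_*$. Since $x_0$ lies in the basin of attraction of $x_*$, \lemref{gradient flow} gives $\gamma_{x_0}(t) \to x_*$. Working with the arc-length reparameterization $\zeta_{x_0}$ from \eqref{zeta}, one can choose $L$ finite so that $\zeta_{x_0}(L) \in \ball(x_*, \delta/2)$, and the compact arc $K := \zeta_{x_0}([0,L])$ contains no critical point of $f$ (the only critical point on the closure of $\gamma_{x_0}$ is $x_*$, reached only in the limit). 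Hence $\mu := \inf_{y \in K}\|\nabla f(y)\| > 0$, and by uniform continuity of $\nabla f$ there is a closed tubular neighborhood $K_\eta := \{y : \mathrm{dist}(y, K) \le \eta\}$ on which $\|\nabla f\| \ge \mu/2$.

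The core of the proof is an Euler-type shadowing estimate comparing $(x_k)$ to $\zeta_{x_0}$ while $x_k$ remains in $K_\eta$. Property~\ref{property:1} gives step sizes in $[\eps S(\mu/2), \eps]$, and Property~\ref{property:3} writes each shift as a scalar multiple of $N(x_k)$ plus a perturbation of relative size at most $R(\eps, \mu/2)$ (using that $R$ is increasing in its first argument and decreasing in its second, so that $R(\|x_{k+1}-x_k\|, \|\nabla f(x_k)\|) \le R(\eps, \mu/2)$). Comparing $x_{k+1} - x_k$ to the increment $\int_{t_k}^{t_{k+1}} N(\zeta_{x_0}(s))\,ds$ of the continuous flow over an arc-length interval of length $\|x_{k+1}-x_k\|$, and using that $N$ is Lipschitz on $K_\eta$ via \eqref{DN} (which is valid because $\|\nabla f\| \ge \mu/2$ there), a standard discrete Gronwall argument yields
\begin{equation}
\mathrm{dist}(x_k, K) \le C_1\bigl(R(\eps,\mu/2) + \eps\bigr)\,\exp(C_2 L),
\end{equation}
for constants $C_1, C_2$ depending only on the Lipschitz data of $N$ on $K_\eta$, valid as long as $x_0, x_1, \dots, x_k$ all remain in $K_\eta$. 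Since $R(\eps,\mu/2) \to 0$ as $\eps \to 0$ by continuity of $R$ with $R(0,\cdot) = 0$, for all sufficiently small $\eps$ this deviation is less than $\min(\eta, \delta/2)$; the lower step-size bound then forces the iterates to advance by at least $\eps S(\mu/2)$ per step along the direction of $\zeta_{x_0}$, so after at most $k_0 = O(L/(\eps S(\mu/2)))$ steps, $x_{k_0}$ must enter $\ball(x_*, \delta)$.

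Once $x_{k_0} \in \ball(x_*, \delta)$, Property~\ref{property:2} applies (taking $\eps$ smaller still if needed) and gives that the subsequence $(x_k)_{k \ge k_0}$ converges to $x_*$, and hence so does $(x_k)$. The main obstacle is the shadowing bound in the middle step: the iterates are not a constant-step-size Euler scheme, the direction is only approximately $N(x_k)$ with a state-dependent error, and the error must be propagated over $\Theta(1/\eps)$ steps without the iterates leaving $K_\eta$. Handling the exceptional ``last shift'' in Properties~\ref{property:1} and~\ref{property:3} is a minor nuisance but causes no real trouble since it affects only a single step, which can be absorbed into the $O(\eps)$ term above.
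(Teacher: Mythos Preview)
Your proposal is correct and follows essentially the same approach as the paper: shadow the arc-length gradient curve $\zeta_{x_0}$ inside a tube where $\|\nabla f\|$ is bounded below, define arc-length markers $t_k$ advancing by $\|x_{k+1}-x_k\|$, run a discrete Gronwall using the Lipschitz bound on $N$ from \eqref{DN}, and use the lower step-size bound from Property~\ref{property:1} to cap the number of steps before reaching the $\delta$-ball of Property~\ref{property:2}. The only imprecision is that your displayed bound should read $\|x_k - \zeta_{x_0}(t_k)\|$ rather than $\mathrm{dist}(x_k, K)$ (the latter alone would not give advancement along the curve), but your surrounding text makes clear you are tracking exactly these pointwise comparisons, matching the paper's $d_k = \|x_k - z_k\|$ and its induction \eqref{induction1}--\eqref{d_bound}.
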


\begin{proof}
Let $\delta_* > 0$ and $\eps_* > 0$ be as in \propertyref{2}, so that it suffices to show that the sequence that the algorithm with parameter $\eps$ small enough that $\eps \le \eps_*$ constructs produces a sequence, denoted $(x_k)$ henceforth, that reaches $\ball(x_*, \delta_*)$.
Let $\zeta$ be shorthand for $\zeta_{x_0}$, defined in \eqref{zeta}, and let $\ell$ de shorthand for $\ell_{x_0}$. 
We let $\cZ_t := \zeta([0,t])$, which is the gradient line up to time $t$. (Note that `time' represents length with the chosen parameterization of the gradient line.)
Since $\cZ_\ell$ joins $x_0$ and $x_*$, the gradient line certainly enters that ball, and the idea is to show that $(x_k)$ remains close to that curve, at least until entering that ball. 

Let $t_\# := \inf\{t \ge 0 : \|\zeta(t) - x_*\| = \delta_*/3\}$. Then 
define
$
\nu := \frac12 \min\{\|\nabla f(z)\| : z \in \cZ_{t_\#}\},
$
and note that $\nu > 0$ by the fact that $\|\nabla f\|$ is continuous and (strictly) positive on $\cZ_{t_\#}$ because the gradient line $\cZ_\ell$ does not contain a critical point other than $x_*$ at its very end. 
By an application of \eqref{kappa2}, we have that $\|\nabla f(y)\| \ge \nu$ for all $y$ in the `tube' $\cT := \ball(\cZ_{t_\#}, \delta_{\rm tube})$, where $\delta_{\rm tube} :=  \nu/\kappa_2$.

{\em The sequence $(z_k)$.}
Define the sequence $t_0 := 0$ and $z_0 := 0$, and for $k \ge 1$, $t_k := t_{k-1} + \eps_k$, where $\eps_k : =\|x_k-x_{k-1}\|$, and $z_k := \zeta(t_k)$.
Of course, as the discretization gets finer and finer, the sequence $(z_k)$ gets closer and closer to the gradient ascent line, and the basic idea is to compare the sequence $(x_k)$ to the sequence $(z_k)$. 
Let $k_\# := \max\{k : t_k \le t_\#\}$, and note that, 
since $t_{k_\#+1} > t_\#$ and 
\begin{align}
\label{k lower bound}
t_{k_\#+1} = \sum_{k=1}^{k_\#+1} \eps_k \le (k_\#+1) \eps, \quad \text{we have $k_\# > t_\#/\eps -1$.}  
\end{align}
Letting $z_\# := \zeta(t_\#)$, we have that $\|z_\# - x_*\| = \delta_*/3$ by construction, and also
\begin{align}
\|z_\# - z_{k_\#}\| 
= \|\zeta(t_\#) - \zeta(t_{k_\#})\| 
= t_\# - t_{k_\#} 
< t_{k_\#+1} - t_{k_\#} 
\le \eps.
\end{align}
Assuming $\eps$ is small enough that $\eps < \delta_*/3$, we can guarantee that $\|z_{k_\#} - x_*\| \le 2\delta_*/3$.
Also, a Taylor expansion gives, 
\begin{align}
z_k - z_{k-1}
&=  \zeta(t_k) - \zeta(t_{k-1}) \\
&= (t_k - t_{k-1}) \dot\zeta(t_{k-1}) \pm \tfrac12 \sup_{0 \le t \le t_k} \|\ddot\zeta(t)\| (t_k-t_{k-1})^2.
\end{align}
While $(t_k - t_{k-1}) \dot\zeta(t_{k-1}) = \eps_k N(z_{k-1})$, based on \eqref{zeta2} and \eqref{DN}, and the fact that $\|\nabla f(z_{k-1})\| \ge 2 \nu$ for any $k \le k_\#$,
\begin{align}
\sup_{0 \le t \le t_k} \|\ddot\zeta(t)\|
\le \sup_{0 \le t \le t_\#} \|D N(\zeta(t))\| 
\le \frac{\kappa_2}{2\nu} + \kappa_2.
\end{align}
Hence,
\begin{align}
z_k - z_{k-1}
&= \eps_k N(z_{k-1}) \pm C_1 \eps^2 , \quad \text{for all } 1 \le k \le k_\#. \label{z diff}
\end{align}

{\em The sequence $(x_k)$.}
Define $d_k := \|x_k - z_k\|$. 
We bound $d_k$ by induction for $0 \le k \le k_\#$. 
Note that $d_0 = 0$ since $z_0 = x_0$. 
Recall the function $R$ in \eqref{shift angle}, and define $Q_1(\eps) := R(\eps, \nu) + C_1 \eps$.
Let $C_2 := \kappa_2/\nu+\kappa_2$, and note that, due to \eqref{DN}, $\|DN(x)\| \le C_2$ for all $x \in \cT$. 
Recall the function $S$ appearing in \eqref{shift size} and take $\eps$ small enough that
\begin{align}
\label{eps small1}
Q_2(\eps) := Q_1(\eps) \frac{\exp[C_2 t_\#/S(\nu)] - 1}{C_2} \le \delta_{\rm tube},
\end{align}
which is possible because $Q_1(\eps) \to 0$ as $\eps \to 0$.
We are now ready to set the induction hypothesis: suppose that
\begin{align}
\label{induction1}
d_m \le Q_1(\eps) \frac{\exp[C_2 \eps m] - 1}{C_2} \le \delta_{\rm tube}, \quad \forall m = 1, \dots, k-1.
\end{align}
This is certainly true at $k = 1$, which primes our induction.
Note that the two inequalities are part of the induction.
We now bound $d_k$, assuming that $k \le k_\#$.
Since $z_{k-1} = \zeta(t_{k-1})$ with $t_{k-1} \le t_\#$, and $\|x_{k-1} - z_{k-1}\| = d_{k-1} \le \delta_{\rm tube}$ by induction, we have $\|\nabla f(x_{k_1})\| \ge \nu$. With that, and \propertyref{3} (specifically \eqref{shift angle}), we derive
\begin{align}
x_k - x_{k-1}
&= \eps_k N(x_{k-1}) \pm \eps_k R(\eps_k, \|\nabla f(x_{k-1})\|) \\ 
&= \eps_k N(x_{k-1}) \pm \eps R(\eps, \nu).
\label{q diff}
\end{align}
The bound \eqref{q diff} combined with \eqref{z diff} gives
\begin{align}
x_k - z_k
&= x_k - x_{k-1} + x_{k-1} - z_{k-1} + z_{k-1} - z_k \\
&= \eps_k N(x_{k-1}) \pm \eps R(\eps, \nu) + x_{k-1} - z_{k-1} - \eps_k N(z_{k-1}) \pm C_1 \eps^2,
\end{align}
which after applying the triangle inequality, results in
\begin{align}
d_k
&\le d_{k-1} + \eps_k \|N(x_{k-1}) - N(z_{k-1})\| + \eps R(\eps, \nu) + C_1 \eps^2.
\end{align}
Since the segment $[z_{k-1},x_{k-1}]$ is inside the ball $\ball(z_{k-1}, \delta_{\rm tube})$ (because, again, $d_{k-1} \le \delta_{\rm tube}$), and that ball is inside $\cT$ (because, again, $z_{k-1} = \zeta(t_{k-1})$ with $t_{k-1} \le t_\#$), $N$ is Lipschitz with constant $C_2$ inside $\ball(z_{k-1}, \delta_{\rm tube})$, which then gives
\begin{align}
\|N(x_{k-1}) - N(z_{k-1})\| 
\le C_2 \|x_{k-1} - z_{k-1}\|.
\end{align}
Hence, we have
\begin{align}
d_k
&\le d_{k-1} + \eps C_2 \|x_{k-1} - z_{k-1}\| + \eps R(\eps, \nu) + C_1 \eps^2\\
&= (1+ C_2 \eps) d_{k-1} + \eps Q_1(\eps). \label{d_bound}
\end{align}
Now, calling in the inequality \eqref{induction1} at $m = k-1$, and simplifying using the fact that $e^a-1-a \ge 0$ for all $a$, we deduce
\begin{align}
d_k 
\le Q_1(\eps) \frac{\exp[C_2 \eps k] - 1}{C_2}.
\end{align}
This is only the first inequality that we needed to propagate.
We now turn to the second one, which consists in bounding the right-hand side by $\delta_{\rm tube}$.
Since we are considering $k \le k_\#$, we have $t_k \le t_\#$, and by using \propertyref{1} (specifically \eqref{shift size}), we further get
\begin{align}\label{prototype property 1}
t_\# \ge t_k = \sum_{m=1}^k \eps_m \ge \eps \sum_{m=1}^k S(\|\nabla f(x_{m-1})\|.
\end{align}
For $m = 1, \dots, k-1$, $d_m \le \delta_{\rm tube}$ by induction, implying as we already saw above that $x_m \in \cT$, in turn implying that $\|\nabla f(x_{m-1})\| \ge \nu$. Plugging this into \eqref{prototype property 1}, we get $t_\# \ge k S(\nu) \eps$, or $k \eps \le t_\#/S(\nu)$. By monotonicity, we thus have
\begin{align}
Q_1(\eps) \frac{\exp[C_2 \eps k] - 1}{C_2}
\le Q_2(\eps)
\le \delta_{\rm tube},
\end{align}
the latter inequality being \eqref{eps small1}.
Thus the induction proceeds. We have thus established that
\begin{align}
\label{d_final}
d_k = \|x_k - z_k\| \le Q_2(\eps), \quad \forall k = 0, \dots, k_\#.
\end{align}

{\em Conclusion.}
In particular, we have $d_{k_\#} = \|x_{k_\#} - z_{k_\#}\| \le Q_2(\eps)$, and taking $\eps$ small enough that $Q_2(\eps) < \delta_*/3$, by the triangle inequality and the fact that $\|z_{k_\#} - x_*\| \le 2\delta_*/3$, we can guarantee that $\|x_{k_\#} - x_*\| < \delta_*$.
This is what we needed to prove.
\end{proof}

In this whole section we continue to use the same notation as in the proof above, except that we make the dependence on the starting point $x$ explicit whenever needed as in, e.g., $\nu(x)$ denoting $\nu$ when associated with $x$. 

Next we provide a uniform version of \thmref{prototype}, in the sense that with $\eps$ small enough, the result in \thmref{prototype} holds for almost all the starting points in $\cA$, which is the union of the basins of attraction of all the modes. Its proof uses the continuity of $\nu$, as given in \lemref{nu_continuity}. 



Recall functions $S$ and $R$ appearing in \eqref{shift size} and \eqref{shift angle}, respectively. We consider an algorithm satisfying the three properties with $S$ and $R$ that depend on $x_0$ only in a uniform way, as specified below. For any $s\in(0,\kappa_0)$, let $\cB_s$ be the union of the basins of attraction for all the critical points in $\up_s$ that are not modes. Note that if $\cB_s$ is not empty, it consists of finitely many $k$-dimensional submanifolds, where $k=0,\cdots,d-1$, and has zero Lebesgue measure,  as indicated right below \lemref{basins}. For $s,\delta> 0$, define $\Gamma_{\delta,s} = \up_s \bigcap \ball(\cB_s,\delta)^{\complement}$. For any $s,\delta> 0$ such that $\Gamma_{\delta,s}$ is not empty, we assume that there exist $S\equiv S_{\delta,s}$ and $R\equiv R_{\delta,s}$ such that the properties hold for all $x_0\in\Gamma_{\delta,s}$. This is the case, for example, of Max Shift; see \lemref{max shift level cluster} and also \lemref{max shift step size} for example.

\begin{theorem}
\label{thm:prototype_uniform}
Consider the prototypical algorithm satisfying the above properties and the assumption on the uniformity of $R$ and $S$. For every $\eta>0$, there exists an $\eps_0>0$ and a measurable set $\Omega_\eta$ with probability measure at least $1 - \eta$ such that for all $\eps\in(0,\eps_0]$, the algorithm applied to any $x_0 \in \Omega_\eta$ returns the associated mode, meaning, $\lim_{t\to\infty}\gamma_{x_0}(t)$.
\end{theorem}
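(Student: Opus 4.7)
My plan is to localize to a compact subset $\Omega_\eta \subseteq \cA$ of probability at least $1-\eta$ and then rerun the proof of \thmref{prototype} with constants that are uniform across $\Omega_\eta$. The natural candidate is $\Omega_\eta := \Gamma_{\delta,s}$ for $s,\delta>0$ chosen as follows. Since $f \in L^1$ and $\up_s \uparrow \supp f$ as $s \downarrow 0$, I can pick $s>0$ with $\int_{\up_s^\complement} f < \eta/2$. For such $s$, the critical set of $f$ in the compact set $\up_s$ is discrete hence finite, so $\cB_s$ is a finite union of submanifolds of codimension at least one, and in particular Lebesgue-null. Standard tube-volume bounds yield $|\ball(\cB_s,\delta) \cap \up_s| \to 0$ as $\delta \downarrow 0$, and $f \le \kappa_0$ then gives $\mathbb{P}(\ball(\cB_s,\delta) \cap \up_s) < \eta/2$ for $\delta$ small enough, so that $\mathbb{P}(\Omega_\eta) \ge 1-\eta$. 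I would also check $\Omega_\eta \subseteq \cA$: since $f$ is non-decreasing along $\gamma_{x_0}$, its terminal critical point lies in $\up_s$, and if this critical point were not a mode then $x_0$ would lie in $\cB_s$, contradicting $x_0 \in \ball(\cB_s,\delta)^\complement$.

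Next I would uniformize the constants that drive the proof of \thmref{prototype}. The set $\Omega_\eta$ is closed in the compact $\up_s$, hence itself compact. Only finitely many modes lie in $\up_s$, so the radii $\delta_*,\eps_*$ provided by \propertyref{2} can be minimized over this finite list to produce uniform values $\delta_*^{\min}, \eps_*^{\min} > 0$. The quantity $\nu$ appearing in the proof of \thmref{prototype} is continuous on $\cA$ by \lemref{nu_continuity}, so on the compact $\Omega_\eta \subseteq \cA$ it attains a positive infimum $\nu_0 > 0$. For a uniform upper bound on $t_\#(x_0)$ I would invoke an energy estimate: since $(d/dt) f(\gamma_{x_0}(t)) = \|\nabla f(\gamma_{x_0}(t))\|^2$, one has $\int_0^T \|\nabla f\|^2 \, dt \le \kappa_0 - s$; combined with $\|\nabla f\| \ge 2\nu_0$ along the portion of the trajectory up to entering $\ball(x_*, \delta_*^{\min}/3)$, this bounds the gradient-parameterization time and hence the arc length $t_\#$ by a finite $t_\#^{\max}$ depending only on $\nu_0,\kappa_0,\kappa_1,s$. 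Finally, $S$ and $R$ are uniform across $\Omega_\eta$ by the standing hypothesis imposed just above the theorem.

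Substituting $(\nu_0, t_\#^{\max}, \delta_*^{\min}, \eps_*^{\min}, S_{\delta,s}, R_{\delta,s})$ for the $x_0$-specific quantities, all of $\delta_{\rm tube}, C_1, C_2, Q_1, Q_2$ defined in the proof of \thmref{prototype} admit uniform bounds. The two smallness conditions used there, $Q_2(\eps) \le \delta_{\rm tube}$ and $Q_2(\eps) < \delta_*^{\min}/3$, can then be ensured by a single $\eps_0 > 0$ (also taken below $\eps_*^{\min}$), and the conclusion of \thmref{prototype} holds for every $x_0 \in \Omega_\eta$ and every $\eps \in (0,\eps_0]$ simultaneously. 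I expect the main technical wrinkle to be the tube-shrinkage estimate $|\ball(\cB_s,\delta)\cap\up_s|\to 0$: it leans on the finiteness of critical points in $\up_s$ together with enough regularity of the stable-manifold submanifolds near their ends to apply uniform tube-volume bounds. The remainder is a compactness-and-continuity layer sitting on top of the pointwise argument already in hand.
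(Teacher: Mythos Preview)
Your overall strategy is exactly the paper's: choose $\Omega_\eta = \Gamma_{\delta,s}$ for suitable $s,\delta$, invoke \lemref{nu_continuity}, bound $t_\#$ via the energy identity $\int_0^{t_\#}\|\nabla f(\zeta_{x_0}(\tau))\|\,d\tau \le \kappa_0$, and plug the resulting uniform constants into \eqref{eps small1}. The tube-volume step and the arc-length bound are handled the same way in both arguments.

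There is, however, one concrete error. You assert that $\nu$ ``attains a positive infimum $\nu_0>0$'' on the compact set $\Omega_\eta$. This is false: the modes $x_*$ of $f$ in $\up_s$ belong to $\Omega_\eta$ (they lie in $\up_s$ and are not in $\cB_s$, hence are outside $\ball(\cB_s,\delta)$ for $\delta$ small), and by the extension of $\nu$ used in \lemref{nu_continuity} one has $\nu(x) = \tfrac12\|\nabla f(x)\|$ for $x\in\bar\ball(x_*,\delta_*/3)$, so $\nu(x_*)=0$. Consequently $\inf_{\Omega_\eta}\nu = 0$, and your choice of $\eps_0$ via $Q_2(\eps)\le\delta_{\rm tube}$ collapses.

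The fix is precisely what the paper does: having already secured a uniform $\delta_*^{\min}>0$ from \propertyref{2}, excise the balls $\ball(x_*,\delta_*^{\min}/3)$ from $\Omega_\eta$ before taking the infimum. On the remaining compact set $\Omega_\eta \setminus \bigcup_{x_*}\ball(x_*,\delta_*^{\min}/3)$ the function $\nu$ is continuous and strictly positive, so $\underline\nu>0$ there, and the induction of \thmref{prototype} only needs to run until the sequence enters one of these balls, at which point \propertyref{2} takes over. With that one correction your argument goes through and matches the paper's.
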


\begin{proof}
  


For an arbitrarily small but fixed $\eta>0$, let $s_\eta$ be the largest $s>0$ such that the probability measure of $\up_s^{\complement}$ is not larger than $\eta/4$. Define $\Gamma_{\delta,s}^*:=\up_{s}\bigcap\ball(\cB_{s},\delta).$ Note that  as $\delta \to 0$, the Lebesgue measure of $\Gamma_{\delta,s_\eta}^*$ is of order $O(\delta)=o(1)$, and hence the probability measure of $\Gamma_{\delta,s_\eta}^*$ is also of order $o(1)$. Let $\delta_\eta$ be the largest $\delta>0$ such that the probability measure of $\Gamma_{\delta,s_\eta}^*$ is not larger than $\eta/4$. Then the probability measure of $\Gamma_{\delta_\eta,s_\eta}=:\Omega_\eta$ is at least $1-\eta$. For simplicity, we denote $S= S_{\delta_\eta,s_\eta}$ and $R= R_{\delta_\eta,s_\eta}$.

Let $\cC$ be the union of balls $\ball(x_*,\delta_*/3)$, where $\delta_*$ is as in \propertyref{2} and depends on the mode $x_*$. Let $\Gamma_{\eta}^- = \Omega_\eta\setminus \cC$. 
Define $\underline\nu_\eta: = \inf_{x\in \Gamma_{\eta}^-} \nu(x)$, which is positive, since $\Gamma_{\eta}^-$ is a compact set, and $\nu$ is continuous and positive on $\cA\supset\Gamma_{\eta}^-$ by \lemref{nu_continuity}. 
Based on the proof of \thmref{prototype}, especially \eqref{eps small1}, in order to guarantee that the prototypical algorithm returns the correct mode for all $x\in \Gamma_{\eta}^-$, we only need to choose $\eps>0$ small enough that 
\begin{align}
\label{uniform_condition}
R(\eps,\underline\nu_\eta) + \kappa_2\eps \le \frac{\kappa_2}{\exp[\kappa_0(\kappa_2/\underline\nu_\eta + \kappa_2)/(\underline\nu_\eta S(\underline\nu_\eta))] - 1}.
\end{align}
Note that here for $t_\#$ appearing in \eqref{eps small1}, we have used $t_\#(x) \le \kappa_0 / \nu(x)$, which is implied by the following inequalities:
\begin{align}
\kappa_0 \ge f(\zeta_x(t_\#(x))) - f(x) = \int_0^{t_\#(x)} \nabla f(\zeta_x(\tau)) \dot\zeta_x(\tau) \d\tau = \int_0^{t_\#(x)} \|\nabla f(\zeta_x(\tau)) \| \d\tau \ge \nu(x) t_\#(x).
\end{align}
Because $R(\eps,\underline\nu_\eta) + \kappa_2\eps \searrow 0$ as $\eps\searrow0$, we indeed have \eqref{uniform_condition} for $\eps$ small enough, 
and thus have shown that when $\eps$ is small enough, for any starting point $x_0\in\Omega_\eta$, we always have the correct clustering result using the prototype algorithm. 
\end{proof}

\subsection{Max Shift and Max Slope Shift}
\label{sec:max (slope) shift}
In this subsection, we show that Max Shift \eqref{max shift density} and a related approach proposed early on by \citet*{koontz1976graph} which we call {\em Max Slope Shift} \eqref{max slope shift density}, are consistent by showing that they both satisfy the three properties listed in \secref{prototype}.

\subsubsection{Max Shift}
\label{sec:max shift}

Max Shift --- introduced in \eqref{max shift density} --- is arguably the simplest, and thus most prototypical, hill-climbing clustering algorithm. We show here that it satisfies the properties required of the prototypical algorithm of \secref{prototype}, establishing its consistency as the neighborhood size tends to zero ($\eps \to 0$). We do so in a series of lemmas.

\begin{lemma}
\label{lem:max shift level cluster}
Take any point $x_0$ in the population. If $\eps$ is smaller than the minimum separation between $\cC(x_0)$ and any other level cluster at the same level, Max Shift initialized at $x_0$ converges (in a finite number of steps) to a mode belonging to $\cC(x_0)$.
\end{lemma}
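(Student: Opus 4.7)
My plan is to decompose the claim into two pieces: an invariance property (the iterates never leave $\cC(x_0)$) and a termination property (when the process stops, it stops at a mode in $\cC(x_0)$, and this happens after finitely many steps).

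For the invariance property I would proceed by induction on $k$. The base case $x(0)=x_0\in\cC(x_0)$ is immediate. For the inductive step, since the argmax step never decreases $f$ we have $f(x(k+1))\ge f(x(k))\ge\cdots\ge f(x_0)$, so $x(k+1)\in\up_{f(x_0)}$, and hence belongs to some connected component of $\up_{f(x_0)}$. Because $\|x(k+1)-x(k)\|\le\eps$ and $x(k)\in\cC(x_0)$ by the inductive hypothesis, any other component $\cC'$ of $\up_{f(x_0)}$ containing $x(k+1)$ would satisfy $\dist(\cC(x_0),\cC')\le\eps$, directly contradicting the separation hypothesis. Hence $x(k+1)\in\cC(x_0)$.

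For the termination property, I would first analyze what a stopped state looks like: if the stopping rule fires at step $k$, then $x(k)\in\argmax_{\bar\ball(x(k),\eps)}f$, so $x(k)$ is an interior maximizer of $f$ on a nontrivial ball, forcing $\nabla f(x(k))=0$. The Morse assumption (nonsingular Hessian at critical points) then upgrades this local maximum to a strict local maximum, i.e.\ a mode; combined with the invariance property this mode lies in $\cC(x_0)$. To argue finite termination, I would assume to the contrary that $f(x(k))$ is strictly increasing; being bounded above by $\kappa_0$ it then converges, so $\Delta_k:=f(x(k+1))-f(x(k))\to 0$. By compactness of $\cC(x_0)$ pick a subsequence $x(k_j)\to x^*$; the map $x\mapsto\max_{\bar\ball(x,\eps)}f - f(x)$ is continuous, so its value at $x^*$ equals $\lim\Delta_{k_j}=0$, forcing $x^*$ to maximize $f$ on $\bar\ball(x^*,\eps)$ and hence (by Morse) to be a mode. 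For $\eps$ small enough that $\bar\ball(x^*,2\eps)$ contains no other critical point, $x^*$ is the strict global max on that ball, so as soon as $x(k_j)\in\ball(x^*,\eps)$ we get $x(k_j+1)=x^*$, and then $x(k_j+2)=x^*$, so the process halts---contradicting non-termination.

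The main obstacle is the finite-step claim itself: it needs $\eps$ to also be below the isolation scale of the modes within $\cC(x_0)$ (an additional smallness condition beyond the stated separation hypothesis, furnished by the Morse assumption through isolated critical points). The invariance argument is pure separation geometry, while mode identification and finite termination both rest on the Morse non-degeneracy of critical points; no Taylor estimates or fine ODE arguments are required here, since the statement is purely about behavior on the given density.
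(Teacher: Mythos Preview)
Your invariance argument is correct and matches the paper's. Your identification of the stopping point as a mode is also fine. The gap is exactly where you flag it: your finite-termination argument requires the additional hypothesis that $\eps$ is below the isolation scale of the modes in $\cC(x_0)$, which the lemma does \emph{not} assume. As written, the implication ``once $x(k_j)\in\ball(x^*,\eps)$ we get $x(k_j+1)=x^*$'' needs $x^*$ to be the \emph{unique} maximizer of $f$ on $\bar\ball(x(k_j),\eps)$, and that is what forces the extra condition.

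The paper sidesteps this entirely with a cleaner observation that you were one line away from. Let $x_\ddag$ be any accumulation point of $(x_k)$ in the compact set $\cC(x_0)$. Since $(f(x_k))$ is increasing, $f(x_\ddag)=\sup_k f(x_k)$. Pick any $k_\ddag$ with $\|x_{k_\ddag}-x_\ddag\|\le\eps$; then $x_\ddag$ is a candidate in the argmax defining $x_{k_\ddag+1}$, so $f(x_{k_\ddag+1})\ge f(x_\ddag)=\sup_k f(x_k)$, which forces $f(x_{k_\ddag+2})=f(x_{k_\ddag+1})$ and the stopping rule fires. No uniqueness of the argmax, no Morse isolation, no constraint on $\eps$ beyond the stated separation. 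From there, the terminal point $x_\infty$ maximizes $f$ on $\bar\ball(x_\infty,\eps)$ and is therefore a mode in $\cC(x_0)$.

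In short: drop the attempt to show $x(k_j+1)=x^*$ and instead use that $f(x(k_j+1))$ already attains the supremum of the $f$-values along the sequence; the stopping rule then does the rest. With that modification your proof is complete and essentially coincides with the paper's.
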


It could be the case that $f(x_0) = 0$, but because $x_0 \in \supp f$, it must be that for any $\eps>0$ there is $x \in \ball(x_0, \eps)$ such that $f(x) > 0$. Therefore, the sequence does not stay at $x_0$, and even though $\cC(x_0)$ is, in this case, a connected component of $\supp f$, the lemma does say that the sequence converges to some density mode.

\begin{proof}
If $x_0$ is a mode, then by choosing $\eps$ small enough that it is a maximum inside $\ball(x_0, \eps)$, the sequence immediately ends at $x_\infty = x_0$. Therefore, in the remaining of the proof, we consider a point $x_0$ which is not a mode.

Since $\eps$ is smaller than the minimum separation between $\cC(x_0)$ and any other $f(x_0)$-level cluster, Max Shift initialized at $x_0$ outputs a sequence that must remain in $\cC(x_0)$. Otherwise, there must be $k$ such that $x_k \equiv x(k) \in \cC(x_0)$ and $x_{k+1} \notin \cC(x_0)$, and because $\|x_{k+1} - x_k\| \le \eps$ and the separation between $\cC(x_0)$ and any other connected component of $\up_{f(x_0)}$ exceeds $\eps$, it must be the case that $x_{k+1} \notin \up_{f(x_0)}$, triggering $f(x_{k+1}) < f(x_0) \le f(x_k)$, contradicting the rules governing the algorithm which makes it hill-climbing.

We now show that the sequence converges. We just saw that the sequence is inside $\cC(x_0)$, which is compact. Therefore, the sequence has at least one accumulation point inside $\cC(x_0)$. 
Let $x_\ddag$ be such a point. By the fact that the sequence of density values $(f(x_k))$ is increasing, it must be the case that $f(x_\ddag) \ge f(x_k)$ for all $k$, and not just the $k$'s indexing the subsequence converging to $x_\ddag$. 
Take $k_\ddag$ such that $x_{k_\ddag}$ is within distance $\eps$ from $x_\ddag$. If $x_{k_\ddag+1} \ne x_\ddag$, it must mean that $f(x_{k_\ddag+1}) \ge f(x_\ddag)$, which then implies that $f(x_{k_\ddag+1}) \ge f(x_k)$ for all $k$, itself implying that the sequence stops at, and thus converges to, $x_{k_\ddag+1}$.

We have thus established that the sequence converges to some point, say $x_\infty$, inside $\cC(x_0)$. And that the convergence happens in a finite number of steps: as soon as $\|x_k - x_\infty\| \le \eps$, it must be that $x_{k+1} = x_\infty$. This in turn implies that $x_\infty$ is a mode within $\ball(x_\infty, \eps)$ --- although perhaps not the only one. Indeed, let $k_\infty$ denote the step at which the sequence stops at $x_\infty$. By the rules governing the algorithm, it stops there because there are no points within that ball with a strictly higher density value.
\end{proof}

\begin{lemma}
\label{lem:max shift step size}
Let $(x_k)$ denote the Max Shift sequence originating from some arbitrary point $x_0$ with $t_0 := f(x_0) > 0$. Suppose that $\eps$ is small enough that \lemref{max shift level cluster} applies, and small enough that any mode in $\cC(x_0)$ is a maximum within a radius of $\eps$. Then, at each step $k$, the shift $x_{k+1}-x_k$ is of size exactly $\eps$, except possibly for the very last shift.
In particular, Max Shift satisfies \propertyref{1}.
\end{lemma}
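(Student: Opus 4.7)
The plan is to argue by contradiction: assume some non-terminal shift $x_{k+1}-x_k$ has size strictly less than $\eps$, and derive that the algorithm must in fact have stopped at step $k+1$, making this the last shift.

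First I would note that, since $x_{k+1}$ maximizes $f$ over the closed ball $\bar\ball(x_k,\eps)$ and lies strictly inside it, $x_{k+1}$ is an interior local maximizer of $f$. Therefore $\nabla f(x_{k+1}) = 0$, and by the Morse (non-degeneracy) assumption on $f$ the Hessian $\nabla^2 f(x_{k+1})$ is negative definite, so $x_{k+1}$ is a mode. By \lemref{max shift level cluster} (whose hypotheses we have taken $\eps$ small enough to ensure), the whole sequence lies in $\cC(x_0)$, so in particular $x_{k+1}$ is a mode of $f$ lying in $\cC(x_0)$.

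Next I would use the second smallness assumption on $\eps$: any mode in $\cC(x_0)$ is a strict maximum of $f$ within a radius $\eps$ (strictness comes from non-degeneracy together with, if needed, shrinking $\eps$ so the relevant neighborhood contains no other critical point). Hence $x_{k+1}$ is the unique maximizer of $f$ over $\bar\ball(x_{k+1},\eps)$, and consequently the algorithm's next iterate is $x_{k+2}=x_{k+1}$, giving $f(x_{k+2})=f(x_{k+1})$, at which point the stopping rule fires. Thus $x_{k+1}-x_k$ was actually the last shift, contradicting our standing assumption.

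We conclude that every non-terminal shift satisfies $\|x_{k+1}-x_k\|=\eps$. In particular, \propertyref{1} holds with $S \equiv 1$, since the upper bound $\|x_{k+1}-x_k\|\le\eps$ is built into the definition of Max Shift and the matching lower bound $\eps\,S(\|\nabla f(x_k)\|)=\eps$ is exactly what we just proved. The only step that requires real care is the passage from ``$x_{k+1}$ is an interior maximizer'' to ``$x_{k+1}$ is a mode and is a strict maximizer within $\bar\ball(x_{k+1},\eps)$''; this is where the Morse condition on $f$ and the chosen smallness of $\eps$ are both essential, and it is the one place where the argument could fail if either hypothesis were relaxed. All other steps are immediate consequences of the definition of Max Shift and of \lemref{max shift level cluster}.
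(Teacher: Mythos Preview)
Your proof is correct and follows essentially the same route as the paper's: assume a non-terminal shift has length strictly less than $\eps$, deduce that $x_{k+1}$ is a local maximum of $f$ lying in $\cC(x_0)$, and then invoke the second smallness assumption on $\eps$ to conclude the process stops there. The only cosmetic difference is that the paper argues directly that an interior maximizer over $\bar\ball(x_k,\eps)$ is a local maximum (hence a mode), whereas you pass through $\nabla f(x_{k+1})=0$ and the Morse condition; this detour is valid but unnecessary, since ``local maximum'' already means ``mode'' in the paper's usage.
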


\begin{proof}
Suppose that, for some $k$, $\eps_{k+1} := \|x_{k+1} - x_k\| < \eps$. By how Max Shift constructs the sequence, this implies that $x_{k+1}$ is a maximum in $\ball(x_k, \eps)$, and in particular a maximum in $\ball(x_{k+1}, \eps - \eps_{k+1})$, and therefore a mode. Because \lemref{max shift level cluster} applies, $x_{k+1} \in \cC(x_0)$, and so the sequence must terminate at $x_{k+1}$ because, by assumption, $x_{k+1}$ is maximum in $\ball(x_{k+1}, \eps)$.
\end{proof}

\begin{lemma}
\label{lem:max shift mode}
Suppose that $x_*$ is a mode. Then there is $\delta > 0$ such that, for $\eps > 0$ small enough, Max Shift initialized at any point in $\ball(x_*, \delta)$ converges to $x_*$.
In particular, Max Shift satisfies \propertyref{2}.
\end{lemma}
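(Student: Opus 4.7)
The plan is to reduce \propertyref{2} for Max Shift to an application of \lemref{max shift level cluster}. The key observation is that for $x_0$ sufficiently close to $x_*$, the level cluster $\cC(x_0)$ is a small set tightly wrapped around $x_*$, it contains $x_*$ as its only mode, and it is uniformly well-separated from every other connected component of $\up_{f(x_0)}$. Under such circumstances \lemref{max shift level cluster} forces Max Shift to converge in finitely many steps to a mode in $\cC(x_0)$, which can only be $x_*$.

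First, using the discreteness of the critical set of a Morse function, I would pick $r_1 > 0$ such that $\ball(x_*, r_1)$ contains no critical point of $f$ other than $x_*$. Let $C$ and $\delta_0$ be the constants furnished by \lemref{cluster ball} for the mode $x_*$. I would then choose $\delta > 0$ small enough that: (i) $\delta \le \delta_0$; (ii) $C\sqrt{\kappa_2/2}\,\delta < r_1$; and (iii) $\ball(x_*, \delta)$ lies in the basin of attraction of $x_*$, which is possible because the non-degeneracy of $\nabla^2 f(x_*)$ (with strictly negative spectrum) makes $x_*$ an asymptotically stable equilibrium of the gradient ascent flow and thus endows it with an open basin. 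For any $x_0 \in \ball(x_*, \delta)$, the Taylor bound $s_* - f(x_0) \le \tfrac12 \kappa_2 \|x_0 - x_*\|^2$ together with (ii) places $f(x_0)$ in the range where \lemref{cluster ball} applies, and the gradient line $\gamma_{x_0}$ provides a connected path inside $\up_{f(x_0)}$ from $x_0$ to $x_*$, so $\cC(x_0) = \cC_{f(x_0)}(x_*) \subset \ball(x_*, C\sqrt{\kappa_2/2}\,\delta) \subset \ball(x_*, r_1)$.

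Next I would bound from below the separation $\sigma(x_0) := \inf\{\|y - z\| : y \in \cC(x_0),\ z \in \up_{f(x_0)} \setminus \cC(x_0)\}$ uniformly over $x_0 \in \ball(x_*, \delta)$. Strict local maximality and non-degeneracy at $x_*$ make $x_*$ an isolated point of $\up_{s_*}$, so $d_0 := \inf\{\|x_* - y\| : y \in \up_{s_*},\ y \ne x_*\}$ is strictly positive (or infinite, in which case the separation argument is trivial). A continuity and compactness argument then shows that for $\delta$ small enough, every component $D$ of $\up_{f(x_0)}$ other than $\cC(x_0)$ lies outside $\ball(x_*, d_0/2)$, so $\sigma(x_0) \ge d_0/2 - C\sqrt{\kappa_2/2}\,\delta \ge d_0/4$ provided $\delta$ is taken small enough. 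Now choose any $\eps \in (0, d_0/4)$ small enough that $x_*$ is the strict maximum of $f$ on $\bar\ball(x_*, \eps)$. The hypotheses of \lemref{max shift level cluster} are then satisfied at $x_0$, so Max Shift converges in finitely many steps to a mode in $\cC(x_0)$; since $\cC(x_0) \subset \ball(x_*, r_1)$ contains no mode other than $x_*$, the limit is $x_*$.

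The hard part will be the uniform separation bound in the preceding paragraph. Turning the qualitative fact that $x_*$ is an isolated point of $\up_{s_*}$ into a uniform lower bound on $\sigma(x_0)$ requires ruling out that some component of $\up_{f(x_0)}$ ``sneaks in'' close to $x_*$ as $x_0$ varies; the essential content is that no component of $\up_s$ other than $\cC_s(x_*)$ can accumulate on $x_*$ as $s \uparrow s_*$, which is a short continuity and Morse-structure argument. Everything else — the Taylor bound, the path argument, the openness of the basin of attraction, and the final invocation of \lemref{max shift level cluster} — is routine.
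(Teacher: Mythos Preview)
Your overall plan is correct and matches the paper's: reduce to \lemref{max shift level cluster} by showing that, for $x_0$ close to $x_*$, the cluster $\cC(x_0)$ contains $x_*$ as its unique mode and is uniformly separated from the other components of $\up_{f(x_0)}$. The execution differs in two places, and the paper's version is cleaner.

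First, to place $x_0$ and $x_*$ in the same component of $\up_{f(x_0)}$, you invoke the gradient line $\gamma_{x_0}$ and the openness of the basin of attraction. The paper sidesteps this: it fixes an \emph{intermediate} level $s_\ddag<s_*$ with $\cC_{s_\ddag}(x_*)\subset\ball(x_*,\delta_\ddag)$ containing $x_*$ as its only critical point, then uses \lemref{cluster ball} to pick $\delta$ with $\ball(x_*,\delta)\subset\cC_{s_\ddag}(x_*)$. This immediately gives $x_0\in\cC_{s_\ddag}(x_*)$ and hence $\cC(x_0)\subset\cC_{s_\ddag}(x_*)$, with no flow argument.

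Second --- and this is your ``hard part'' --- the paper gets the uniform separation without any limiting or compactness argument. Having fixed the single level $s_\ddag$, the required $\eps$ is just the separation between $\cC_{s_\ddag}(x_*)$ and every other $s_\ddag$-level cluster, a fixed positive number. Since $f(x_0)\ge s_\ddag$, every $f(x_0)$-level cluster sits inside some $s_\ddag$-level cluster, and $\cC_{s_\ddag}(x_*)$ can contain only one of them because it contains only one mode (\lemref{cluster mode}); hence the separation at level $f(x_0)$ is at least the separation at level $s_\ddag$. Your route via $d_0=\dist(x_*,\up_{s_*}\setminus\{x_*\})$ and a limit argument also works, but the paper's cluster-tree monotonicity is shorter and avoids the part you flagged as delicate.

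One minor point: your condition (i) $\delta\le\delta_0$ is not quite enough to place $f(x_0)$ in the range where \lemref{cluster ball} applies; you need $(\kappa_2/2)\,\delta^2<\delta_0^2/C^2$, i.e., $C\sqrt{\kappa_2/2}\,\delta<\delta_0$, which should be added alongside (ii).
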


\begin{proof}
Our assumptions on the density imply that the critical points are isolated. Therefore, there is $\delta_\ddag>0$ such that $x_*$ is the only critical point in $\ball(x_*, \delta_\ddag)$. Let $s_\ddag = \max\{f(x) : x \in \partial\ball(x_*, \delta_\ddag)\}$. By construction, $s_\ddag < s_* := f(x_*)$ and $\cC_{s_\ddag}(x_*) \subset \ball(x_*, \delta_\ddag)$. By \lemref{cluster ball}, there is $\delta \le \delta_\ddag$ such that $\ball(x_*, \delta) \subset \cC_{s_\ddag}(x_*)$. Suppose $\eps>0$ is smaller than the separation between $\cC_{s_\ddag}(x_*)$ and any other $s_\ddag$-level cluster.

Now take a starting point $x_0 \in \ball(x_*, \delta)$, and let $s_0 := f(x_0)$. Since $x_0 \in \cC_{s_\ddag}(x_*)$, we have $t_0 \ge s_\ddag$, and therefore we must have $\cC(x_0) \subset \cC_{s_\ddag}(x_*)$. 
Note that the separation between $\cC(x_0)$ and any other $s_0$-level cluster must exceed $\eps$. This is because any $s_0$-level cluster must be inside a $s_\ddag$-level cluster, and $\cC_{s_\ddag}(x_*)$ cannot contain more than one $s_0$-level cluster because of \lemref{cluster mode} and the fact that it only contains one mode.
We are thus able to apply \lemref{max shift level cluster} to assert that the sequence converges to a local mode within $\cC_{s_\ddag}(x_*)$, which must be $x_*$ since, again, $\cC_{s_\ddag}(x_*)$ does not contain any other mode by construction.
\end{proof}

\begin{lemma}
\label{lem:max shift angle}
Let $(x_k)$ denote the Max Shift sequence originating from some arbitrary point $x_0$ in the population. 
Assume that $\eps$ is small enough that \lemref{max shift step size} applies.
Then for each shift, except perhaps for the last one, 
\begin{equation}
\label{max shift angle}
x_{k+1} - x_k
= \eps N(x_k) \pm \frac{C_0 \eps^{3/2}}{\|\nabla f(x_k)\|^{1/2}},
\end{equation}
whenever $\nabla f(x_k) \ne 0$.
In particular, Max Shift satisfies \propertyref{3}.
\end{lemma}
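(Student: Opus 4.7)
The plan is to exploit the maximization defining Max Shift via two Taylor expansions at $x_k$ (using \eqref{taylor2}): one to lower bound $f(x_{k+1})$ by comparing the value attained at $x_{k+1}$ to the value at the ideal point $x_k + \eps N(x_k)$, and one to upper bound $f(x_{k+1}) - f(x_k)$ in terms of the linear inner product $\nabla f(x_k)^\top (x_{k+1}-x_k)$. Combining these yields a lower bound on this inner product, which via the parallelogram identity translates into a bound on the deviation of the (unit) shift direction from $N(x_k)$.

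More concretely, first invoke \lemref{max shift step size} to write $x_{k+1} - x_k = \eps u_k$ with $\|u_k\| = 1$, so the statement reduces to controlling $\|u_k - N(x_k)\|$. Next, since $x_k + \eps N(x_k) \in \bar\ball(x_k,\eps)$ and $x_{k+1}$ maximizes $f$ on that ball, the Taylor bound \eqref{taylor2} gives
\begin{equation}
f(x_{k+1}) \;\ge\; f(x_k + \eps N(x_k)) \;\ge\; f(x_k) + \eps\|\nabla f(x_k)\| - \tfrac12 \kappa_2 \eps^2.
\end{equation}
Applying \eqref{taylor2} the other way (to $x_{k+1}$ itself),
\begin{equation}
f(x_{k+1}) \;\le\; f(x_k) + \eps \,\nabla f(x_k)^\top u_k + \tfrac12 \kappa_2 \eps^2.
\end{equation}
Subtracting and dividing by $\eps\|\nabla f(x_k)\|$ yields
\begin{equation}
N(x_k)^\top u_k \;\ge\; 1 - \frac{\kappa_2 \eps}{\|\nabla f(x_k)\|}.
\end{equation}

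Finally, since both $u_k$ and $N(x_k)$ are unit vectors, $\|u_k - N(x_k)\|^2 = 2 - 2 N(x_k)^\top u_k \le 2\kappa_2 \eps/\|\nabla f(x_k)\|$, so that
\begin{equation}
\|x_{k+1} - x_k - \eps N(x_k)\| = \eps \|u_k - N(x_k)\| \;\le\; \sqrt{2\kappa_2}\,\frac{\eps^{3/2}}{\|\nabla f(x_k)\|^{1/2}},
\end{equation}
which is \eqref{max shift angle} with $C_0 = \sqrt{2\kappa_2}$. To cast this in the form of \propertyref{3}, set $R(u,v) := \sqrt{2\kappa_2}\, u^{1/2} v^{-1/2}$, which is continuous on $(0,\infty)^2$, increasing in $u$, decreasing in $v$, and satisfies $R(0,v)=0$.

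No step is particularly delicate: the only thing that requires care is extracting the $\eps^{3/2}/\|\nabla f\|^{1/2}$ scaling, which emerges naturally from taking a square root after passing from the inner product bound to the deviation of unit vectors. The use of \lemref{max shift step size} to fix $\|x_{k+1}-x_k\|=\eps$ exactly (except possibly at the final shift, which the statement excludes) is what makes this reduction clean.
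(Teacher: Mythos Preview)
Your proof is correct and follows the same approach as the paper's: Taylor-expand at $x_k$, use the optimality of $x_{k+1}$ over $\bar\ball(x_k,\eps)$ to derive $N(x_k)^\top u_k \ge 1 - \kappa_2\eps/\|\nabla f(x_k)\|$, then convert this into a bound on the deviation of the shift from $\eps N(x_k)$. Your last step via the unit-vector identity $\|u_k - N(x_k)\|^2 = 2 - 2\,N(x_k)^\top u_k$ is in fact slightly cleaner than the paper's orthogonal decomposition and Pythagoras argument, since it sidesteps the case split the paper makes on whether $\kappa_2\eps \le \|\nabla f(x_k)\|$.
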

\begin{remark}
We note that if three times continuous differentiability of $f$ is assumed, the $\eps^{3/2}$ in the remainder of \eqref{max shift angle} can be enhanced to $\eps^2$ by performing higher order Taylor expansions in \eqref{taylor2} and \eqref{f_diff}.
\end{remark}

\begin{proof}
Let $u_{k+1} := x_{k+1} - x_k$ and $\eps_{k+1} := \|u_{k+1}\|$.
The result comes from comparing $f(x_{k+1})$, which by construction maximizes $f(x)$ over $x \in \bar\ball(x_k, \eps)$, with $f(x'_{k+1})$ where $x'_{k+1} : =x_k + \eps_{k+1} N(x_k)$. Since by construction we must have $\eps_{k+1} \le \eps$, we have $x'_{k+1}\in \bar\ball(x_k, \eps)$, triggering $f(x_{k+1}) \ge f(x'_{k+1})$.

In general, using \eqref{taylor2}, we have
\begin{align}
\label{f_diff}
f(x+u) - f(x+\|u\| N(x))
&= \nabla f(x)^\top u - \|u\| \|\nabla f(x)\| \pm \kappa_2 \|u\|^2,
\end{align}
so that 
\begin{align}
f(x+u) - f(x+\|u\| N(x)) < 0 \quad \text{if} \quad \frac{\nabla f(x)^\top u}{\|\nabla f(x)\| \|u\|} < 1 - \frac{\kappa_2 \|u\|}{\|\nabla f(x)\|}.
    \end{align}
Applying this to $x_k, x_{k+1}, u_{k+1}$ as defined by the Max Shift, we obtain  
\begin{align}
\label{max shift angle proof1}
N(x_k)^\top u_{k+1} \ge (1 - \kappa_2\eps/\|\nabla f(x_k)\|) \eps_{k+1}.
\end{align} 

First, suppose that $\eps/\|\nabla f(x_k)\| > 1/\kappa_2$.
Then, taking $C_0$ large enough that \smash{$C_0/\kappa_2^{1/2} \ge 2$}, we have
\[\|x_{k+1}-x_k\| - \eps N(x_k) = \pm 2 \eps \quad \text{and} \quad C_0 \eps^{3/2}/\|\nabla f(x_k)\|^{1/2} > \eps C_0/\kappa_2^{1/2} \ge 2\eps,\]
so that \eqref{max shift angle} holds.
Next, assume that $\eps/\|\nabla f(x_k)\| \le 1/\kappa_2$.
Set $u_{k+1} = a N(x_k) + v_{k+1}$ where $a := N(x_k)^\top u_{k+1}$ and $v_{k+1} \perp N(x_k)$. 
Using \lemref{max shift step size} and \eqref{max shift angle proof1}, we get that, except possibly for the last shift, $a \ge (1 - \kappa_2\eps/\|\nabla f(x_k)\|) \eps \ge 0$. This and Pythagoras gives 
\begin{align}
\label{max shift angle proof2}
\|v_{k+1}\|^2 = \eps_{k+1}^2 - a^2 \le \eps^2 - (1 - \kappa_2\eps/\|\nabla f(x_k)\|)^2 \eps^2 \le 2 \kappa_2 \eps^3/\|\nabla f(x_k)\|,
\end{align}
so that 
\begin{align}
u_{k+1} 
&= \eps N(x_k) \pm \kappa_2\eps^2/\|\nabla f(x_k)\| \pm (2 \kappa_2 \eps^3/\|\nabla f(x_k)\|)^{1/2},
\end{align}
from which we obtain \eqref{max shift angle} after some simplification. 
\end{proof}

Because it satisfies all three properties, by \thmref{prototype}, 
\begin{center}
{\em Max Shift is consistent}.
\end{center}


\subsubsection{Max Slope Shift}
\label{sec:max slope shift}

Soon after the original paper of \citet{fukunaga1975}, \citet*{koontz1976graph} proposed a variant where, at each step, a point is moved to the point within a neighborhood window that results in the largest slope. This definition implies that a medoid algorithm which we give in \eqref{max slope shift}. 
We consider a regularized version of this algorithm that disallows shifts shorter than some set fraction of the neighborhood size. This regularization enables us to consider a continuous (medoid-less) formulation that parallels Max Shift \eqref{max shift density} quite closely, taking the following form 
\begin{equation} \label{max slope shift density}
x(0) = x_0; \quad
x(k+1) \in \begin{cases}
\text{local maxima in $\ball(x(k), \eps)$ if non-empty; otherwise} \\ 
\displaystyle\argmax_{x \in \ring(x(k), \eps, c \eps)} \frac{f(x) - f(x(k))}{\|x-x(k)\|}, \quad k \ge 0,
\end{cases}
\end{equation}
where $\ring(x, a, b) := \bar\ball(x, a) \setminus \ball(x, b)$.
The constant $0 < c < 1$ is arbitrary but fixed beforehand.
We refer to this algorithm as {\em Max Slope Shift}.

\begin{remark}
It turns out that, without such a regularization, the algorithm fails. Indeed, consider the simplest setting of a unimodal density, for example, the standard normal distribution $f(x) = \exp(-x^2/2)/\sqrt{2\pi}$. On the negative half-line, $(-\infty, 0)$, the slope is maximum at the (unique) inflection point occurring at $x = -1$. It is not hard to see that, assuming that $\eps < 1$, initialized at $x_0 \le -1$, Max Slope Shift produces a sequence that converges (in a finite number of steps) to that inflection point.
\end{remark} 

We show that Max Slope Shift satisfies the three properties, and the arguments are almost identical to those just detailed for Max Shift.

For \propertyref{1}, it is satisfied by definition since all the shifts, except possibly for the very last one, are of size in between $c \eps$ and $\eps$.

For \propertyref{2}, we can see that \lemref{max shift level cluster} applies to Max Slope Shift, simply because in the algorithm: 
{\em (i)} the steps are of size at most $\eps$;
{\em (ii)} it is hill-climbing; 
{\em (iii)} the process stops at the next step when there is a mode within distance $\eps$; 
{\em (iv)} if the process stops, it must stop at a mode.
Having verified that \lemref{max shift level cluster} applies, \lemref{max shift mode} follows immediately, so that Max Slope Shift satisfies \propertyref{2}.

For \propertyref{3}, we retrace the arguments underlying \lemref{max shift angle}. First, \eqref{max shift angle proof1} holds in exactly the same way and for exactly the same reasons. 
As detailed below that, we may focus on the situation where $\eps/\|\nabla f(x_k)\| \le 1/\kappa_2$.
We set $u_{k+1} := a N(x_k) + v_{k+1}$ as before, although this time $a \ge (1 - \kappa_2\eps/\|\nabla f(x_k)\|) \eps_{k+1} \ge 0$, which then gives 
\[\|v_{k+1}\|^2 = \eps_{k+1}^2 - a^2 \le \eps_{k+1}^2 - (1 - \kappa_2\eps/\|\nabla f(x_k)\|)^2 \eps_{k+1}^2 \le 2 \kappa_2 \eps^3/\|\nabla f(x_k)\|.\]
The last inequality being identical to the last inequality in \eqref{max shift angle proof2}, the remaining arguments apply verbatim, allowing us to conclude that Max Slope Shift satisfies \propertyref{3}.

Because it satisfies all three properties, by \thmref{prototype}, 
\begin{center}
{\em Max Slope Shift is consistent}.
\end{center}

\subsection{Euler Shift and Line Search Shift}
\label{sec:euler line search}
We now consider Euler Shift \eqref{euler} and its close variant, {\em Line Search Shift} introduced below in \eqref{line search}. 
We follow the blueprint that we detailed for the prototype algorithm of \secref{prototype}. 

\subsubsection{Euler Shift}
\label{sec:euler}
Euler Shift has already been shown to be consistent. This was most definitely done in\footnote{ An errata was issued shortly after the paper was published. Although the error was relatively minor, it was historically important as the same mistake had been made before in other works claiming to have established consistency, including \cite{comaniciu2002mean}. This was pointed out by others working in the field \citep{li2007note,ghassabeh2015sufficient}.} \cite{arias2016estimation}, following the same general proof architecture, which itself is well-known in the study of the Euler method, even at the level of textbooks as exemplified by the proof of \cite[Th 1.1]{iserles2009first}.
For completeness, we provide some details nonetheless.

For \propertyref{1}, we have
\begin{align}
\rho \|\nabla f(x_k)\| = \|x_{k+1} - x_k\| \le \rho \kappa_1,
\end{align}
so that it is satisfied with $\eps := \rho \kappa_1$.

\begin{lemma}
\label{lem:euler cluster}
For $\rho$ small enough, Euler Shift is hill-climbing, and when initialized inside a level cluster $\cC$, Euler Shift converges to a critical point inside $\cC$.
In particular, Euler Shift satisfies \propertyref{2}.
\end{lemma}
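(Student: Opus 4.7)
The plan is to run the same three-step template used for Max Shift: first establish the hill-climbing property via a one-line Taylor expansion, then use it to confine the sequence to $\cC$ and show subsequential limits are critical points, and finally derive \propertyref{2} by localizing to a single-mode level cluster exactly as in \lemref{max shift mode}.

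For the hill-climbing step, I would apply the Taylor bound \eqref{taylor2} with $y = x_k + \rho\nabla f(x_k)$ and $x = x_k$ to obtain
\begin{align}
f(x_{k+1}) - f(x_k) \ge \rho\|\nabla f(x_k)\|^2 - \tfrac{\kappa_2}{2}\rho^2\|\nabla f(x_k)\|^2 = \rho\big(1 - \tfrac{\kappa_2}{2}\rho\big)\|\nabla f(x_k)\|^2.
\end{align}
So as soon as $\rho < 2/\kappa_2$, the sequence $(f(x_k))$ is non-decreasing, and strictly increasing whenever $\nabla f(x_k)\ne 0$. Since every shift has length at most $\rho\kappa_1 = \eps$, shrinking $\rho$ further (so that $\eps$ is below the separation between $\cC$ and any other $f(x_0)$-level cluster) forces the sequence to remain in the level cluster $\cC \equiv \cC(x_0)$ by the same reasoning as in \lemref{max shift level cluster}: exiting $\cC$ in one step would require $x_{k+1}\notin\up_{f(x_0)}$, contradicting $f(x_{k+1})\ge f(x_k)\ge f(x_0)$.

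For convergence to a critical point in $\cC$, I would argue by contradiction using compactness of $\cC$. The sequence $(f(x_k))$ is monotone and bounded above by $\kappa_0$, hence converges; in particular $\|\nabla f(x_k)\|\to 0$ from the displayed inequality above. Any accumulation point $x_\infty\in\cC$ (there is at least one, by compactness) must then satisfy $\nabla f(x_\infty) = 0$, by continuity of $\nabla f$. The only remaining subtlety is that, a priori, $(x_k)$ could have several accumulation points; but under our Morse assumption critical points are isolated in $\cC$, and since $\|x_{k+1}-x_k\|\le \rho\kappa_1\to 0$ and $\nabla f(x_k)\to 0$, a standard Ostrowski-type argument (accumulation points of a sequence with vanishing increments form a connected set) forces a single limit.

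For \propertyref{2}, I would copy the argument of \lemref{max shift mode} verbatim: pick $\delta_\ddag > 0$ so that $x_*$ is the only critical point in $\bar\ball(x_*,\delta_\ddag)$, set $s_\ddag = \max\{f(x): x\in\partial\ball(x_*,\delta_\ddag)\} < s_*$, apply \lemref{cluster ball} to find $\delta\le\delta_\ddag$ with $\ball(x_*,\delta)\subset\cC_{s_\ddag}(x_*)\subset\ball(x_*,\delta_\ddag)$, and take $\eps = \rho\kappa_1$ smaller than the separation between $\cC_{s_\ddag}(x_*)$ and any other $s_\ddag$-level cluster. Then any $x_0\in\ball(x_*,\delta)$ starts in a level cluster nested inside $\cC_{s_\ddag}(x_*)$, and by the convergence result just established the Euler Shift sequence converges to a critical point of $f$ lying inside $\cC_{s_\ddag}(x_*)$, which by construction can only be $x_*$.

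I expect the main, and really only, obstacle to be the uniqueness of the accumulation point inside $\cC$ in the second step; the hill-climbing inequality and the level-cluster confinement are essentially formal, but ruling out oscillation between distinct critical points requires the Morse hypothesis and the vanishing increment $\|x_{k+1}-x_k\|\to 0$. For \propertyref{2} this issue evaporates, because the bounding cluster $\cC_{s_\ddag}(x_*)$ contains only one critical point by construction.
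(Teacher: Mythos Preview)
Your proposal is correct and mirrors the paper's proof almost exactly: the same Taylor bound for hill-climbing, the same monotone-$f(x_k)$ plus compactness argument giving $\nabla f(x_k)\to 0$, and the same reduction of \propertyref{2} to the \lemref{max shift mode} localization. If anything, you are slightly more careful than the paper in requiring $\eps=\rho\kappa_1$ below the inter-cluster separation to confine the sequence to $\cC$; the paper asserts confinement from hill-climbing alone. One small slip: in your Ostrowski step you wrote ``$\|x_{k+1}-x_k\|\le\rho\kappa_1\to 0$'', but $\rho$ is fixed --- the vanishing increment comes from $\|x_{k+1}-x_k\|=\rho\|\nabla f(x_k)\|\to 0$, which you do have.
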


\begin{proof}
Using \eqref{taylor2}, for any $x \in \bbR^d$ and any $s > 0$, we get
\begin{align}
f(x+s \nabla f(x)) - f(x)
&= \nabla f(x)^\top (s \nabla f(x)) \pm \tfrac12 \kappa_2 \|s \nabla f(x)\|^2 \\
&\ge \big(1 - \tfrac12 \kappa_2 s\big) s \|\nabla f(x)\|^2.
\end{align}
Therefore, if $\rho > 0$ in \eqref{euler} is small enough that $\rho < 2/\kappa_2$, Euler Shift is hill-climbing. Henceforth, we assume that $0 < \rho \le 1/\kappa_2$, so that
\begin{align}
\label{euler cluster proof1}
f(x_{k+1}) - f(x_k)
&\ge \tfrac12 \rho \|\nabla f(x_k)\|^2.
\end{align}

Because it is hill-climbing, any Euler Shift sequence $(x_k)$ such that $x_0 \in \cC$ must remain in $\cC$. It must also have the property that $(f(x_k))$ converges --- since it is increasing and $f$ is assumed bounded --- and in view of \eqref{euler} this implies that $\nabla f(x_k) \to 0$. Hence, by continuity of the gradient, any accumulation point of $(x_k)$ must be a critical point. 
Furthermore, the shift size converges to zero since $\|x_{k+1} - x_k\| = \rho \|\nabla f(x_k)\|$, and because the critical points are assumed to be isolated, it must be the case (by elementary considerations) that $(x_k)$ is convergent. And by what was said earlier, the limit must be a critical point inside $\cC$.

That the algorithm satisfies \propertyref{2} follows exactly as for Max Shift, as the same arguments given in the proof of \lemref{max shift mode} apply.
\end{proof}

For \propertyref{3}, in view of \eqref{euler}, any Euler Shift sequence satisfies
\begin{align}
\label{euler property 3}
x_{k+1} - x_k = \|x_{k+1} - x_k\| N(x_k),
\end{align}
simply because the shifts are exactly aligned with the gradients at the corresponding locations.

Because it satisfies all three properties, by \thmref{prototype}, 
\begin{center}
{\em Euler Shift is consistent}.
\end{center}

\paragraph{Variants}
Euler Shift refers to the forward Euler discretization of the gradient flow of $f$. In their original proposal \cite{fukunaga1975} advocated for using, instead, the gradient flow of $\log f$, leading to the algorithm 
\begin{equation} \label{original}
x(0) = x; \quad
x(k+1) = x(k) + \rho \frac{\nabla f(x(k))}{f(x(k))}, \quad k \ge 0.
\end{equation}
The same proof blueprint applies to this variant, and others like it. Specifically, consider
\begin{equation} \label{original variant}
x(0) = x; \quad
x(k+1) = x(k) + \rho \varphi(f(x(k))) \nabla f(x(k)), \quad k \ge 0,
\end{equation}
where $\varphi$ is a non-increasing and positive function on $(0,\infty)$. Clearly, with $\varphi(a) \propto 1/a$ we recover \eqref{original}.
Showing that the algorithm in \eqref{original variant} is consistent boils down to showing that it is hill-climbing when $\rho$ is small enough.
Using \eqref{taylor2}, as before, for any $x \in \bbR^d$ and any $s > 0$, we get
\begin{align}
f(x+\rho \varphi(f(x)) \nabla f(x)) - f(x)
&= \rho \varphi(f(x)) \|\nabla f(x)\|^2 \pm \tfrac12 \kappa_2 \rho^2 \varphi(f(x))^2 \|\nabla f(x)\|^2 \\
&\ge \big(1 - \tfrac12 \kappa_2 \rho \varphi(f(x)) \big) \rho \varphi(f(x)) \|\nabla f(x)\|^2 \\
&\ge 0, \quad \text{if } \tfrac12 \kappa_2 \rho \varphi(f(x)) \le 1. \label{variant hill-climbing proof1}
\end{align}
Now, initialize the algorithm at some point $x_0$ with $f(x_0) > 0$, and let $(x_k)$ denote the sequence that results. 
Take $\rho > 0$ small enough that $\tfrac12 \kappa_2 \rho \varphi(f(x_0)) \le 1$. From \eqref{variant hill-climbing proof1}, we have that $f(x_1) \ge f(x_0)$. Suppose for induction that $(f(x_m) : m = 1, \dots, k)$ is non-decreasing. Because $\varphi$ is non-increasing and the induction hypothesis implies that $f(x_k) \ge f(x_0)$, we have $\tfrac12 \kappa_2 \rho \varphi(f(x_k)) \le \tfrac12 \kappa_2 \rho \varphi(f(x_0)) \le 1$, so that $f(x_{k+1}) \ge f(x_k)$ by \eqref{variant hill-climbing proof1}. Therefore, the induction proceeds, establishing that the algorithm is hill-climbing.

Other variants are possible. For example, in some previous work \cite{arias2021asymptotic,arias2021level} we found it useful to work with the following one
\begin{equation} \label{level shift}
x(0) = x; \quad
x(k+1) = x(k) + \rho \frac{\nabla f(x(k))}{\|\nabla f(x(k))\|^2}, \quad k \ge 0.
\end{equation}
This is a discretization of the flow 
\begin{equation}
x(0) = x; \quad
\dot x(t) = \frac{\nabla f(x(t))}{\|\nabla f(x(t))\|^2},
\end{equation}
which has the interesting property that $f(x(t)) = t$ for all (applicable) values of $t$.
(The square in the denominator is crucial, as without it we simply have the unit-speed flow given in \eqref{zeta}.)
Proving the consistency of such methods can be done by adapting the blueprint. Some adaptation is indeed necessary.  Take this particular variant, for example. It is not a priori guaranteed that it is hill-climbing all the way to a mode. However, it is easy to see that it is hill-climbing until the gradient becomes too small. Indeed, using the usual route via \eqref{taylor2}, we have that $f(x_{k+1}) \ge f(x_k)$ if $\tfrac12 \kappa_2 \rho \le \|\nabla f(x_k)\|^2$.  Although this is not enough to show that the algorithm satisfies \propertyref{2}, a quick look at the proof of \thmref{prototype} reveals that it is enough. Indeed, we care about what happens while the gradient remains $\ge \nu$, and the process is hill-climbing in that region as long as $\tfrac12 \kappa_2 \rho \le \nu^2$. \propertyref{1} is also questionable, since $\|x_{k+1} -x_k\| = \rho/\|\nabla f(x_k)\|$, but while the gradient is $\ge \nu$, we do have $\rho/\kappa_1 \le \|x_{k+1} -x_k\| \le \rho/\nu$, and this is enough.

\subsubsection{Line Search Shift}
\label{sec:line search}

In practice, a Euler scheme may not be monotone, and a way to force that is to perform a line search in the direction given by the gradient:
\begin{equation} \label{line search}
x(0) = x; \quad
x(k+1) = x(k) + \rho_k \nabla f(x(k)), \quad \rho_k \in \argmax_{r \in [0,\rho]} f(x(k) + r \nabla f(x(k))), \quad k \ge 0.
\end{equation}
We call this algorithm {\em Line Search Shift}, and we prove below that it is consistent as well.
The algorithm appears to be new, although the possibility of implementing a line search is briefly discussed in \cite{carreira2015clustering}.

For \propertyref{1}, using \eqref{taylor2}, for any $x \in \bbR^d$ and any $r \in [0, \rho]$, we have
\begin{align}
f(x+\rho \nabla f(x)) - f(x+r \nabla f(x))
&= (\rho - r) \|\nabla f(x)\|^2 \pm \tfrac12 \kappa_2 (\rho^2 + r^2) \|\nabla f(x)\|^2 \\
&\ge \big(\rho-r - \kappa_2 \rho^2\big) \|\nabla f(x)\|^2.
\end{align}
Therefore, if $\rho > 0$ in \eqref{euler} is small enough that $\kappa_2 \rho \le 1/2$, the last expression is strictly positive when $r < \rho/2$ and $\nabla f(x) \ne 0$. Hence, assuming $\rho$ is that small, in the process of running the algorithm, until reaching a critical point (at which point the sequence has converged) $\rho_k$ in \eqref{line search} must satisfy $\rho_k \in [\rho/2, \rho]$, which then implies that
\begin{align}
\label{line search property 1}
\tfrac12 \rho \|\nabla f(x_k)\| \le \rho_k \|\nabla f(x_k)\| = \|x_{k+1}-x_k\| \le \rho \kappa_1.
\end{align}
Hence, the algorithm satisfies \propertyref{1}.

Line Search Shift is hill-climbing by construction, and in view of \eqref{line search property 1}, the arguments underlying \lemref{euler cluster} apply almost verbatim to show that the algorithm satisfies \propertyref{2}. 

And the algorithm satisfies \propertyref{3}, in fact, \eqref{euler property 3} applies, for the same reason that the shifts are exactly aligned with the gradient directions.

Because it satisfies all three properties, by \thmref{prototype}, 
\begin{center}
{\em Line Search Shift is consistent}.
\end{center}

\subsection{Mean Shift}
\label{sec:mean shift}

Given a kernel with bandwidth $h > 0$, denoted $K_h(\cdot):=K(\frac{\cdot}{h})$, define the mean shift at a point $x$ as follows
\begin{align} \label{ms_h}
\ms_h(x) := \frac{\int K_h(y - x) (y - x) f(y) \d y}{\int K_h(y - x) f(y) \d y}.
\end{align}
The {\em Mean Shift} algorithm is then defined as constructing the following sequence when initialized at some point $x$:
\begin{equation} \label{mean shift}
x(0) = x; \quad
x(k+1) = x(k) + \ms_h(x(k)), \quad k \ge 0.
\end{equation}

We establish its consistency following, again, the blueprint detailed for the prototype algorithm of \secref{prototype}.
Some elements of consistency for Mean Shift appear in the work of \citet{cheng1995} and \citet{comaniciu2002mean, comaniciu1999mean}, and a few others reviewed in \cite{carreira2015clustering}.
As far as we are aware of, the consistency of Mean Shift per se is established here for the first time.

We start with the following result, which is a continuous version of \cite[Th 1]{cheng1995}.
\begin{lem} \label{lem:ms_h}
Suppose that $K(x) = \sck(\|x\|^2)$ for some nonnegative, nondecreasing, integrable function $\sck$. Define $\scl(u) := c \int_u^\infty \sck(v) \d v$ and $L(x) := \scl(\|x\|^2)$. 
Then
\begin{equation} \label{shadow}
\ms_h(x)
= \frac{h^2}{2c} \frac{\nabla (L_h * f)(x)}{K_h * f(x)}.
\end{equation}
\end{lem}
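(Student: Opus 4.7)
The plan is a direct computation exploiting the chain rule relationship between $L$ and $K$ built into the definition of $\scl$ as the antiderivative $-c\int^\infty_u\sck$. The identity is essentially Cheng's `shadow kernel' identity \citep{cheng1995}, but here I would present it in its continuous form (for the population density rather than an empirical density), which requires nothing beyond differentiating under the integral sign.

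First I would compute $\nabla L_h$ pointwise. Writing $L_h(z) = \scl(\|z\|^2/h^2)$ and using $\scl'(u) = -c\sck(u)$, the chain rule gives
\begin{align}
\nabla L_h(z) = \scl'(\|z\|^2/h^2) \cdot \frac{2z}{h^2} = -\frac{2c}{h^2}\, z\, \sck(\|z\|^2/h^2) = -\frac{2c}{h^2}\, z\, K_h(z).
\end{align}
The hypothesis that $\sck$ is nonnegative and integrable, combined with $\sck$ being nondecreasing (in fact the statement should be nonincreasing, but either way the integral defining $\scl$ converges at the upper limit), ensures $L_h$ is well-defined and continuously differentiable wherever needed.

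Next I would differentiate the convolution. Since $f$ is bounded (by $\kappa_0$) and $\nabla L_h(z) = -(2c/h^2)z K_h(z)$ is integrable in $z$ (because $\sck$ is integrable and the extra factor $\|z\|$ is absorbed by the tail decay of $\sck$ at the scale $h$, modulo the usual tacit assumption that $\int \|z\|K(z)\d z < \infty$ which is standard for kernels used in this context), differentiation under the integral yields
\begin{align}
\nabla(L_h * f)(x) = \int \nabla L_h(x-y)\, f(y)\, \d y = -\frac{2c}{h^2}\int (x-y)\,K_h(x-y)\,f(y)\,\d y.
\end{align}
Using that $K_h$ is radial (so $K_h(x-y) = K_h(y-x)$) and flipping the sign gives
\begin{align}
\nabla(L_h * f)(x) = \frac{2c}{h^2}\int (y-x)\,K_h(y-x)\,f(y)\,\d y.
\end{align}

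Finally I would recognize this last integral as exactly $(h^2/(2c))^{-1}$ times the numerator of $\ms_h(x)$ in \eqref{ms_h}, while the denominator $\int K_h(y-x)f(y)\d y$ is precisely $(K_h * f)(x)$, again by radial symmetry of $K_h$. Dividing produces
\begin{align}
\ms_h(x) = \frac{\tfrac{h^2}{2c}\nabla(L_h*f)(x)}{(K_h*f)(x)},
\end{align}
which is \eqref{shadow}. There is no real obstacle here; the only thing worth flagging is the justification for differentiating under the integral, which is routine given the boundedness of $f$ and the integrability/decay conditions on $K$ implicit in the kernel setup.
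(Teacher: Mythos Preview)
Your argument is correct and essentially identical to the paper's: compute $\nabla L_h$ via the chain rule using $\scl' = -c\,\sck$, pass the gradient inside the convolution, and identify the numerator and denominator of $\ms_h$. Your additional remarks on the ``nonincreasing'' typo and on justifying differentiation under the integral are apt but do not change the structure of the proof.
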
  

\begin{proof}
Assume without loss of generality that $c=1$.
Notice that
\begin{align*}
 \nabla L_h (x) = \nabla \Big[ h^{-d} \int_{\|x/h\|^2}^\infty \sck(v)\d v \Big] = -\frac{2}{h^2} h^{-d} \sck(\|x/h\|^2)x = -\frac{2}{h^2} K_h(x)x.
\end{align*}
Using this, we obtain
\begin{align*}
\nabla (L_h * f)(x) = \nabla L_h * f (x) = \frac2{h^2} \int K_h(y - x) (y - x) f(y) \d y.
\end{align*}
Up to the scaling factor in front, we recognize the numerator in \eqref{ms_h}. And the denominator is simply $K_h * f(x)$.
\end{proof}

If $\sck$ decays fast enough at infinity, $L$ via $\scl$ properly normalized is also a kernel, just like $K$. We assume this is so henceforth. \citet{cheng1995} calls $L$ a `shadow kernel' of $K$. Assuming this is the case, we know from the extensive literature on kernel density estimation that, under some standard regularity conditions on $f$, $K_h * f(x) \to f(x)$ and $\nabla (L_h * f)(x) = L_h * \nabla f(x) \to \nabla f(x)$ as $h \to 0$, implying that 
\begin{equation} \label{F_h}
\frac{\nabla (L_h * f)(x)}{K_h * f(x)} \to \frac{\nabla f(x)}{f(x)}.
\end{equation}
On the right-hand side we recognize the function driving the dynamic system \eqref{original}, indicating that Mean Shift resembles this other algorithm with $\rho = h^2/2c$.
This is true, but to establish consistency, we take a different route which is arguably closer to the blueprint of \secref{prototype}, and overall more conceptual than calculatory.

In view of \eqref{shadow}, Mean Shift can be described by the following Euler scheme
\begin{equation} \label{mean shift euler}
x(0) = x; \quad
x(k+1) = x(k) + \rho_h \frac{\nabla f^\scl_h(x(k))}{f^\sck_h(x(k))}, \quad k \ge 0,
\end{equation}
where $f^\scl_h := L_h * f$ and $f^\sck_h := K_h * f$, and $\rho_h := h^2/2c$.
Note that this is a gradient flow of $f^\scl_h$ --- and not of $f$ --- with varying stepwise that is inversely proportional to $f^\sck_h$.
The idea is to work with that, showing via the blueprint that the algorithm converges to a mode of $f^\scl_h$, and then argue that such a mode is close to a mode of $f$ when the bandwidth $h$ is small.

Before we start, we note two things make this different from \eqref{original variant} with $f^\scl_h$ in place of $f$. One is that the step size is not a function of $f^\scl_h$ but of $f^\sck_h$. The other is that $f^\scl_h$ depends on the step size, which is $\rho_h$. 
But it turns out that we can work with this.

By our smoothness assumption on the density $f$, we have that $f^\scl_h$ is twice differentiable with $\nabla f^\scl_h = L_h * \nabla f$ and $\nabla^2 f^\scl_h = L_h * \nabla^2 f$. In particular, we have $\sup_x\|\nabla f^\scl_h(x)\| \le \sup_x\|\nabla f(x)\|$ and $\sup_x\|\nabla^2 f^\scl_h(x)\| \le \sup_x\|\nabla^2 f(x)\|$, which then implies that \eqref{kappa1} and \eqref{kappa2}, and thus also \eqref{taylor2}, apply to $f^\scl_h$ with the meaning of $\kappa_1$ and $\kappa_2$ unchanged. In addition, we have the zeroth, first, and second derivatives of $f^\sck_h$ converge uniformly to those of $f$ as $h \to 0$. Let 
\begin{align}\label{eta_all}
\eta^{\scl, 0}_h := \sup_x |f^\scl_h(x) - f(x)|, \quad
\eta^{\scl, 1}_h := \sup_x \|\nabla f^\scl_h(x) - \nabla f(x)\|, \quad
\eta^{\scl, 2}_h := \sup_x \|\nabla^2 f^\scl_h(x) - \nabla^2 f(x)\|,
\end{align}
so that 
\begin{align}\label{eta}
\eta^\scl_h := \max\{\eta^{\scl, 0}_h, \eta^{\scl, 1}_h, \eta^{\scl, 2}_h\} \to 0, \quad \text{as $h \to 0$.}
\end{align}
The same is true of $f^\sck_h$, meaning that 
\begin{align}
\eta^\sck_h := \max\{\eta^{\sck, 0}_h, \eta^{\sck, 1}_h, \eta^{\sck, 2}_h\} \to 0, \quad \text{as $h \to 0$,}
\end{align} 
with analogous definitions, although we will only use the zeroth order convergence.
In fact, more is true.
\begin{lemma}
\label{lem:stability}
For any non-critical level $t$ of $f$, as $h \to 0$, within $\up_t$, the modes of $f^\scl_h$ and their basins of attraction, as well as its $t$-upper level set, converge to those of $f$.
\end{lemma}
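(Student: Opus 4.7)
The plan is to exploit the uniform $C^2$ convergence $\eta^\scl_h \to 0$ together with the Morse (non-degenerate critical point) structure of $f$. I would organize the argument around the three claims.

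\textbf{Upper level set.} Because $t$ is a non-critical level of $f$, there is a neighborhood $U$ of $\level_t$ and a constant $\nu > 0$ with $\|\nabla f\| \ge \nu$ on $U$ (using compactness of $U$ and continuity). Combined with $\sup_x |f^\scl_h(x) - f(x)| \le \eta^\scl_h$, a transverse crossing argument (parametrize $U$ by a tubular neighborhood coordinate along the gradient of $f$) shows that $\level_t^{\scl,h} := \{f^\scl_h = t\}$ lies in the $(\eta^\scl_h/\nu)$-neighborhood of $\level_t$ for $h$ small. Hausdorff convergence $\up_t^{\scl,h} := \{f^\scl_h \ge t\} \to \up_t$ follows.

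\textbf{Modes.} Let $x_{*,1},\ldots,x_{*,M}$ be the modes of $f$ in $\up_t$ and $y_{*,1},\ldots,y_{*,N}$ the remaining critical points of $f$ in $\up_t$ (all of them isolated and finite in number since $\up_t$ is compact). Choose pairwise disjoint balls $B_i$ around each critical point on which $\nabla^2 f$ is, respectively, negative definite or has fixed non-degenerate signature. Since $\|\nabla^2 f^\scl_h - \nabla^2 f\|_\infty \le \eta^\scl_h \to 0$, a quantitative inverse function theorem applied to $\nabla f^\scl_h$ (viewed as a perturbation of $\nabla f$) yields, for $h$ sufficiently small, a unique critical point of $f^\scl_h$ in each $B_i$, inheriting the signature of the original Hessian; in particular modes map to modes. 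Outside $\bigcup B_i$, compactness of $\up_t$ and non-degeneracy give $\|\nabla f\| \ge \nu' > 0$, hence $\|\nabla f^\scl_h\| > 0$ there for small $h$, ruling out any additional critical point of $f^\scl_h$ in $\up_t^{\scl,h}$.

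\textbf{Basins of attraction.} Fix a mode $x_{*,i}$ of $f$ and let $x^{\scl,h}_{*,i}$ denote the corresponding mode of $f^\scl_h$ produced above. For any $x$ strictly inside the basin of $x_{*,i}$, Lemma \ref{lem:full continuity} combined with Lyapunov capture at a non-degenerate maximum gives a finite time $T_x$ such that $\gamma_x(T_x) \in B_i$ and the remainder of the trajectory stays in $B_i$. Let $\gamma^{\scl,h}_x$ be the gradient flow of $f^\scl_h$ started at $x$. On $[0, T_x]$, Gronwall (exactly as in the proof of \thmref{prototype}, replacing $N$ by $\nabla f^\scl_h/\|\nabla f^\scl_h\|$ and using $\|\nabla f^\scl_h - \nabla f\|_\infty \le \eta^\scl_h$) gives $\|\gamma^{\scl,h}_x(T_x) - \gamma_x(T_x)\| \to 0$ as $h \to 0$; hence $\gamma^{\scl,h}_x(T_x) \in B_i$ too. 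Once in $B_i$, the fact that $f^\scl_h$ is strictly concave on $B_i$ for $h$ small (by $C^2$ closeness to $f$, which is strictly concave there) makes $f^\scl_h$ a strict Lyapunov function and $B_i$ forward-invariant, so $\gamma^{\scl,h}_x$ converges to $x^{\scl,h}_{*,i}$. Thus every compact set strictly inside the basin of $x_{*,i}$ lies in the basin of $x^{\scl,h}_{*,i}$ for $h$ small; since by \lemref{basins} the basin boundaries have Lebesgue measure zero, the basins converge in the desired sense.

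The main obstacle I anticipate is the uniform-in-$h$ Lyapunov capture step: one must verify that the local strict concavity of $f$ at a mode transfers to $f^\scl_h$ on a ball of fixed radius, so that the capture neighborhood $B_i$ can be chosen independently of $h$; this is precisely where the $C^2$ control $\eta^{\scl,2}_h \to 0$ (rather than just $C^0$ or $C^1$) is indispensable, and it is what lets the Gronwall comparison close within a bounded time horizon $T_x$ without the perturbation error blowing up.
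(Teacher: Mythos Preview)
Your proposal is essentially correct and, in substance, reproduces the paper's own argument. The paper's proof of this lemma is a one-line forward reference to two general stability results (\lemref{mode_consistency} and \lemref{basin consistency}) stated for an arbitrary $C^2$-perturbation $\tilde f \in \cF_\eta$: the first gives Hausdorff convergence of upper level sets and a one-to-one, signature-preserving correspondence of critical points via eigenvalue continuity and a Taylor argument; the second gives pointwise (then uniform) basin convergence via a Gronwall comparison on a finite time horizon followed by capture near the mode. Your three paragraphs are precisely a direct-for-$f^\scl_h$ version of those two lemmas, so the two approaches coincide up to packaging; the paper merely factors the work out because the same stability statements are reused in \secref{methods} for the KDE $\hat f_{n,h}$.

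One technical wrinkle to tighten in your basin step: strict concavity of $f^\scl_h$ on $B_i$ does \emph{not} by itself make the Euclidean ball $B_i$ (which is centered at the $f$-mode $x_{*,i}$, not at $x^{\scl,h}_{*,i}$) forward-invariant for the $f^\scl_h$-gradient flow, since $\nabla f^\scl_h$ need not point toward $x_{*,i}$ on $\partial B_i$. What strict concavity \emph{does} give, via a mean-value expansion of $\nabla f^\scl_h$ between the current point and $x^{\scl,h}_{*,i}$, is that $t \mapsto \|\gamma^{\scl,h}_x(t) - x^{\scl,h}_{*,i}\|^2$ is strictly decreasing while the trajectory is in $B_i$; hence any ball centered at $x^{\scl,h}_{*,i}$ and contained in $B_i$ is forward-invariant, which suffices once the Gronwall step lands you close enough. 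The paper sidesteps this by working with level clusters of the perturbed density (automatically forward-invariant for a hill-climbing flow) rather than Euclidean balls; either route closes the argument.
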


\begin{proof}
The result immediately follows from \lemref{mode_consistency} and \lemref{basin consistency} given in \secref{methods}.
\end{proof}


\begin{lemma}
\label{lem:mean shift cluster}
Take a non-critical level $t >0$. For $h$ small enough, the following happens. Let $\cC$ be a level cluster for $f^\scl_h$. Then, when initialized inside $\cC$, Mean Shift is hill-climbing  for $f^\scl_h$ and converges to a critical point of $f^\scl_h$ inside $\cC$. In particular, Mean Shift satisfies \propertyref{2} with respect to $f^\scl_h$.
\end{lemma}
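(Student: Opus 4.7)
The plan is to follow the blueprint of \secref{prototype}, viewing Mean Shift through the Euler-scheme representation \eqref{mean shift euler} as a close cousin of the variant \eqref{original variant} driven by $f^\scl_h$ in place of $f$. Two wrinkles, already flagged in the text, need to be handled: the step-size divisor is $f^\sck_h$ rather than $f^\scl_h$, and $f^\scl_h$ itself depends on $h$. Both are controlled by the uniform smoothness bounds $\kappa_1, \kappa_2$ (which transfer to $f^\scl_h$, as noted just before \eqref{eta_all}) and the consistency quantities $\eta^\scl_h, \eta^\sck_h \to 0$. The crucial observation is that on a level cluster $\cC$ of $f^\scl_h$ at some level $s \ge t$, one has $f^\scl_h \ge t$ on $\cC$, so for $h$ small enough that $\eta^{\scl, 0}_h + \eta^{\sck, 0}_h \le t/2$, also $f^\sck_h \ge t/2$ throughout $\cC$. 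I fix such an $h$ for the rest of the argument.

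I would prove hill-climbing by applying \eqref{taylor2} --- valid for $f^\scl_h$ with the same $\kappa_2$ --- to the update \eqref{mean shift euler}, yielding
\begin{align}
f^\scl_h(x_{k+1}) - f^\scl_h(x_k) \ge \frac{\rho_h}{f^\sck_h(x_k)}\Big(1 - \frac{\kappa_2 \rho_h}{2 f^\sck_h(x_k)}\Big) \|\nabla f^\scl_h(x_k)\|^2,
\end{align}
whose right-hand side is at least $\tfrac{\rho_h}{2 f^\sck_h(x_k)} \|\nabla f^\scl_h(x_k)\|^2$ once $\rho_h = h^2/2c \le t/\kappa_2$. The shift size is then uniformly bounded by $2\rho_h \kappa_1/t$, so, mimicking \lemref{max shift level cluster}, for $h$ small the sequence cannot escape $\cC$ into another $s$-level cluster and must remain in $\cC$. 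The bounded non-decreasing sequence $(f^\scl_h(x_k))$ converges; telescoping the above inequality gives $\sum_k \|\nabla f^\scl_h(x_k)\|^2 < \infty$, hence $\nabla f^\scl_h(x_k) \to 0$ and the shifts shrink to $0$. By compactness of $\cC$, any accumulation point is a critical point of $f^\scl_h$ inside $\cC$; and for $h$ small, the critical points of $f^\scl_h$ in $\cC$ are isolated --- because $f$ is Morse and $f^\scl_h \to f$ in $C^2$, the critical points of $f^\scl_h$ in the compact $\cC$ are non-degenerate perturbations of those of $f$, which is made precise by \lemref{stability}. The argument used in \lemref{euler cluster} then forces convergence to a single such critical point.

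Property 2 follows in the same spirit as \lemref{max shift mode}. Given a mode $x_*$ of $f^\scl_h$, invoke \lemref{cluster ball} applied to $f^\scl_h$ (valid because $f^\scl_h$ inherits the Morse-type regularity from $f$ for $h$ small, via \lemref{stability}) to produce a level cluster $\cC_\ddag$ of $f^\scl_h$ containing $x_*$ as its only critical point, together with a radius $\delta > 0$ with $\ball(x_*, \delta) \subset \cC_\ddag$. Applying the previous paragraph with $\cC = \cC_\ddag$, Mean Shift initialized in $\ball(x_*, \delta)$ remains in $\cC_\ddag$ and converges to the unique critical point of $f^\scl_h$ inside $\cC_\ddag$, namely $x_*$. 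The main obstacle throughout is the verification that critical points of $f^\scl_h$ in the relevant compact region are isolated and non-degenerate, so that the standard gradient-descent convergence machinery applies --- this is precisely what \lemref{stability} delivers by transferring the Morse structure of $f$ to $f^\scl_h$ for $h$ small.
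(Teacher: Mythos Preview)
Your proposal is correct and follows essentially the same route as the paper: a Taylor expansion of $f^\scl_h$ along the update \eqref{mean shift euler} yields the hill-climbing inequality under $\kappa_2 \rho_h \le f^\sck_h(x_k)$, a bound on the shift size keeps the sequence inside $\cC$, and then the arguments of \lemref{euler cluster} (shrinking shifts, isolated critical points) close out convergence; \propertyref{2} follows via \lemref{max shift mode}. One small presentational point: as written, your lower bound $f^\sck_h(x_k) \ge t/2$ uses $x_k \in \cC$, while the argument that $x_k \in \cC$ uses hill-climbing, which in turn uses that lower bound --- the paper resolves this by an explicit induction on $k$ (showing $f^\scl_h(x_0) \le \cdots \le f^\scl_h(x_k)$ and deriving $f^\sck_h(x_k) \ge t/2$ from $f^\scl_h(x_k) \ge f^\scl_h(x_0) \ge t$ without needing $x_k \in \cC$), and you should make that induction explicit.
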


\begin{proof}
Assume that $h$ is small enough that $\eta^\scl_h \le t/2$, so that for any $x$ in any $t$-level cluster for $f^\scl_h$, $f(x) \ge f^\scl_h(x) - \eta^\scl_h \ge t - t/2 = t/2$.
Take $h$ even smaller that $2 \eta^\scl_h + \eta^\sck_h \le t/2$ and that $\kappa_2 \rho_h = \kappa_2 h^2/2 \le t/2$.
And still smaller that all the $t$-level clusters are separated by more than $\rho_h (\kappa_1 + \eta^\scl_h)/(t/2)$. The latter is possible because as $h \to 0$ the $t$-level clusters of $f^\scl_h$ converge to those of $f$ by \lemref{stability}, and the latter are disjoint.

Now that $h$ is fixed, we proceed. Let $\cC$ denote a $t$-level cluster for $f^\scl_h$ and initialize the process at some $x_0 \in \cC$, and let $(x_k)$ denote the resulting sequence. 
Using a second-order Taylor development for $f^\scl_h$, we have
\begin{align}
f^\scl_h(x_{k+1}) - f^\scl_h(x_{k})
&= \rho_h \frac{\|\nabla f^\scl_h(x_k)\|^2}{f^\sck_h(x_k)} \pm \frac{\kappa_2}2 \rho_h^2 \frac{\|\nabla f^\scl_h(x_k)\|^2}{f^\sck_h(x_k)^2} \\
&\ge \tfrac{1}{2} \rho_h \frac{\|\nabla f^\scl_h(x_k)\|^2}{f^\sck_h(x_k)} \quad \text{when} \quad \kappa_2 \rho_h \le f^\sck_h(x_k). \label{mean shift proof1}
\end{align}

We prove by induction that $(f^\scl_h(x_k))$ is strictly increasing until convergence if it converges in a finite number of steps.
Suppose that we have shown that $f^\scl_h(x_0) \le \cdots \le f^\scl_h(x_k)$, which is certainly true at $k = 0$.
Because 
\begin{align}
\kappa_2 \rho_h 
\le t/2
\le f(x_0)/2
&\le f(x_0) - 2 \eta^\scl_h - \eta^\sck_h \\
&\le f^\scl_h(x_0) - \eta^\scl_h - \eta^\sck_h \\
&\le f^\scl_h(x_k) - \eta^\scl_h - \eta^\sck_h \\
&\le f(x_k) - \eta^\sck_h \\
&\le f^\sck_h(x_k),
\end{align}
the condition in \eqref{mean shift proof1} is satisfied, and so the inequality holds, showing that the induction carries on.
Therefore, the algorithm is hill-climbing for $f^\scl_h$.

In the process, we have shown that $f^\scl_h(x_k) \ge t/2$ for all $k$, implying the following upper bound on the shift size
\begin{align}
\|x_{k+1} - x_k\|
= \rho_h \frac{\|\nabla f^\scl_h(x_k)\|}{f^\sck_h(x_k)}
\le \rho_h \frac{\kappa_1 + \eta^\scl_h}{t/2}.
\end{align}
By assumption on $h$, the upper bound is strictly smaller than the separation between the $t$-level clusters of $f^\scl_h$, implying that the sequence remains in $(x_k)$ because it is hill-climbing for $f^\scl_h$.

The remaining arguments underlying \lemref{euler cluster} carry over verbatim to establish the entire statement, although here with reference to $f^\scl_h$ instead of $f$.
\end{proof}

Just like any other Euler scheme, \propertyref{3} with respect to $f^\scl_h$ is trivially satisfied since
\begin{align}
x_{k+1} - x_k
= \|x_{k+1} - x_k\| \|\nabla f^\scl_h(x_k)\|.
\end{align}

As for \propertyref{1}, it is also satisfied since, in the same context of \lemref{mean shift cluster}, we have
\begin{align}
\rho_h \frac{\|\nabla f^\scl_h(x_k)\|}{\kappa_0} 
\le \|x_{k+1} - x_k\|
= \rho_h \frac{\|\nabla f^\scl_h(x_k)\|}{f^\sck_h(x_k)}
\le \rho_h \frac{\|\nabla f^\scl_h(x_k)\|}{t/2},
\end{align}
using the fact that $\sup_x f^\sck_h(x) \le \sup_x f(x)$ and $f^\sck_h(x_k) \ge t/2$ shown above in the induction.

Because it satisfies all three properties, by \thmref{prototype}, we may conclude (with some care here) that, given a level $s > 0$, for $h$ small enough, when initialized at any point in 
$f^\scl_h(x_0) \ge s$ that is in the basin of attraction of some mode of $f^\scl_h$, Mean Shift converges to that mode. 

This is not quite what we want, as we are interested in the modes of $f$ and their basins of attraction. Here we invoke \lemref{stability}. Take any point $x_0$ such that $f(x_0) > 0$ and take any level $s > 0$ such that $f(x_0) \ge 2 s$. Suppose in addition that $x_0$ is in the basin of attraction $\cB$ of some mode $x_*$ of $f$.
By the stability lemma, to each $h$ small enough we may associate a mode $x^\scl_{h, *}$ of $f^\scl_h$ with basin of attraction $\cB^\scl_h$ such that $x^\scl_{h, *} \to x_*$ and $\cB^\scl_h \to \cB$ as $h \to 0$.
For $h$ small enough, we have $x_0 \in \cB^\scl_h$.  
For $h$ even smaller, $f^\scl_h(x_0) \ge s$.
And for $h$ still even smaller, the previous statement holds.
When $h$ is that small, Mean Shift initialized at $x_0$ converges to $x^\scl_{h, *}$. And because $x^\scl_{h, *} \to x_*$ as $h \to 0$, we may conclude that 
\begin{center}
{\em Mean Shift is consistent}.
\end{center}

\section{Consistency: Methods}
\label{sec:methods}

As announced in the Introduction, we call `method' an algorithm applied to an estimate of the density computed based on a sample of points. As is usually the case, we assume that the sample was generated iid from the underlying density we are ultimately interested in. In this context, we establish the large-sample ($n\to\infty$) consistency of the methods associated with the various algorithms considered in \secref{algorithms}.

Continuing with same definition of clustering, we say that a method is {\em consistent} if it moves a fraction tending to one of data points to their associated mode when the sample size is sufficiently large --- and the bandwidth defining the density estimate is appropriately chosen. 

\subsection{Some stability results}

We first present some important results regarding the stability of the upper level sets, modes and their basins of attraction for density functions. Consider a density function $\tilde f$ on $\bbR^d$, which is to be understood as a perturbed version of $f$, and is really a placeholder for a kernel density estimator of $f$ in practice. Denote $\tilde \up_s := \{x: \tilde f(x) \ge s\}$, which is the upper $s$-level set of $\tilde f$. Let 
\begin{align}
\eta_{0} := \sup_x |\tilde f(x) - f(x)|, \quad
\eta_{1} := \sup_x \|\nabla \tilde f(x) - \nabla f(x)\|, \quad
\eta_{2} := \sup_x \|\nabla^2 \tilde f(x) - \nabla^2 f(x)\|.
\end{align}
We use $\cF_\eta$ to denote the class of density functions $\tilde f$ satisfying $ \max\{\eta_0,\eta_1,\eta_2\} \le \eta$, and the same second condition as $f$ listed in \secref{density}. For an arbitrary quantity $g$ that tends to zero with $\eta$, we write $g=\omega(\eta)$.

\begin{lemma}
\label{lem:mode_consistency}
In the present context, the following is true: \\
(a) For any $s\in(0,\kappa_0)$ such that $\cL_s$ does not contain a critical point of $f$: \\
\indent (a1) $\sup\{\haus(\up_s,\tilde \up_s) : \tilde f \in \cF_\eta\} = \omega(\eta)$;\\
\indent (a2) for any $\tilde f \in \cF_\eta$, all the critical points of $\tilde f$ in $\up_s$ are non-degenerate, when $\eta$ is small enough. \\
(b) For any mode $x_*$ of $f$, there is $\delta_*>0$ such that, when $\eta$ is small enough, any function $\tilde f \in \cF_\eta$ has only one critical point (in fact, a mode) in $\ball(x_*,\delta_*)$; moreover, if we denote the mode of $\tilde f$ in $\ball(x_*,\delta_*)$ by $x_*[\tilde f]$, we have $\sup\{\|x_* - x_*[\tilde f]\| : \tilde f \in \cF_\eta\} =\omega(\eta)$. \\
(c) Let $\cM$ denote the set of all the modes of $f$ and $\cM[\tilde f]$ denote the same for an arbitrary function $\tilde f$. For any non-critical level $s$ of $f$, we have $\sup\{\haus(\up_s \cap \cM, \up_s \cap \cM[\tilde f]) : \tilde f \in \cF_\eta\} = \omega(\eta)$.
\end{lemma}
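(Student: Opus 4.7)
\emph{Proof proposal.} I would tackle the three parts in order, exploiting that $f$ being Morse forces $\|\nabla f\|$ to be bounded below away from the (finite) critical set and $\nabla^2 f$ to be invertible at each critical point --- features that persist under any $C^2$-perturbation of size $\eta$. For (a1), non-criticality of $s$ yields $\nu > 0$ and an open neighborhood $U$ of $\cL_s$ with $\|\nabla f\| \ge \nu$ on $U$, so $\|\nabla \tilde f\| \ge \nu/2$ on $U$ once $\eta_1 \le \nu/2$. For $x \in \cL_s$, the map $t \mapsto \tilde f(x + t N(x))$ has derivative $\ge \nu/2$ near $t=0$; since $|\tilde f(x) - s| \le \eta_0$, the intermediate value theorem supplies $|t| \le 2\eta_0/\nu$ with $\tilde f(x + tN(x)) = s$, yielding $\cL_s \subset \ball(\tilde\cL_s, 2\eta_0/\nu)$. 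The reverse inclusion is symmetric via $\tilde N := \nabla \tilde f/\|\nabla \tilde f\|$, and the same pushing argument upgrades the conclusion to $\haus(\up_s, \tilde\up_s) = O(\eta)$.

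For (a2), the critical points of $f$ in $\up_s$ form a finite set (isolated in a compact). I would fix disjoint small balls $\ball(x_c, r_c)$ about each of them: outside $\bigcup_c \ball(x_c, r_c)$, $\|\nabla f\|$ is bounded below on $\up_s$, so $\|\nabla \tilde f\| > 0$ there once $\eta$ is small; inside each $\ball(x_c, r_c)$, $\nabla^2 \tilde f$ is within $O(r_c) + \eta_2$ of the non-singular $\nabla^2 f(x_c)$, so any critical point of $\tilde f$ inside is non-degenerate.

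Part (b) is the crux and will call for a quantitative implicit function argument. Rewriting $\nabla \tilde f(x) = 0$ as
\begin{align}
x = x_* - [\nabla^2 f(x_*)]^{-1}\bigl(\nabla f(x) - \nabla^2 f(x_*)(x-x_*)\bigr) - [\nabla^2 f(x_*)]^{-1}\bigl(\nabla \tilde f(x) - \nabla f(x)\bigr),
\end{align}
the right-hand side becomes a $\tfrac12$-contraction on $\ball(x_*,\delta_*)$ that maps this ball to itself, provided $\delta_*$ is small (using Lipschitzness of $\nabla^2 f$) and then $\eta$ is small. The Banach fixed-point theorem produces a unique $x_*[\tilde f]$ in $\ball(x_*,\delta_*)$ with $\|x_*[\tilde f]-x_*\| = O(\eta_1)$; by (a2) it is the only critical point of $\tilde f$ there, and continuity of $\nabla^2 \tilde f$ preserves negative-definiteness for $\eta$ small, so it is a mode. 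Since the bound depends on $\tilde f$ only through $\eta$, the supremum over $\cF_\eta$ is $O(\eta)$.

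Finally (c) is book-keeping. Part (b) yields, for each mode $x_* \in \up_s$ of $f$, a mode of $\tilde f$ within $O(\eta)$. Conversely, any critical point of $\tilde f$ in $\up_s$ lies in one of the balls $\ball(x_c, r_c)$ from (a2); if $x_c$ is a saddle or local minimum of $f$, then $\nabla^2 f(x_c)$ has a non-negative eigenvalue, so $\nabla^2 \tilde f$ does too at nearby points for $\eta$ small, ruling out a mode of $\tilde f$ there. Hence every mode of $\tilde f$ in $\up_s$ is within $O(\eta)$ of a mode of $f$ in $\up_s$, completing the Hausdorff bound. The chief obstacle I anticipate is making the implicit function argument in (b) quantitative and uniform over $\cF_\eta$; the rest is continuity plus compactness.
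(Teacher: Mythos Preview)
Your proposal is correct and the overall architecture (localize near critical points, bound the gradient away from them, use Hessian continuity inside) matches the paper's, but two of your key steps take genuinely different routes.

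For (a1), you push points along the gradient direction to find a nearby point on the perturbed level set, obtaining the quantitative bound $O(\eta/\nu)$ directly. The paper instead sandwiches $\tilde\up_s$ between $\up_{s-2\eta_0}$ and $\up_{s+2\eta_0}$ via $\|f-\tilde f\|_\infty \le \eta_0$ and then invokes a separate lemma (from a companion paper) that $\haus(\cL_{s-2\eta_0},\cL_{s+2\eta_0}) \to 0$. Your argument is more self-contained and explicitly quantitative; the paper's is shorter because it defers the work to a citation.

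For (b), the approaches differ substantially. You set up a Newton-type fixed-point iteration and apply the Banach contraction principle to produce a unique critical point with $\|x_*[\tilde f]-x_*\| = O(\eta_1)$. The paper instead argues existence by comparing $\tilde f(x_*)$ with $\tilde f$ on $\partial\ball(x_*,\delta_*)$, uniqueness from strict concavity of $\tilde f$ on the ball (inherited from the negative-definite Hessian), and then combines two Taylor expansions to obtain only $\|x_*-\tilde x_*\|^2 \le C\eta_0$, i.e., an $O(\sqrt{\eta})$ rate. Your contraction argument is sharper and cleaner; just note that the paper only assumes $\nabla^2 f$ is \emph{uniformly continuous}, not Lipschitz, so where you write ``using Lipschitzness of $\nabla^2 f$'' you should instead use its modulus of continuity to make the first bracket small for $\delta_*$ small --- the contraction still goes through. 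Parts (a2) and (c) are essentially the same in both, except that the paper cites a Morse-theory lemma for (a2) while you spell it out.
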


We note that similar `stability' results can be found in the literature, e.g., \cite[Th 1]{cuevas2006plug}, \cite[Lem 8]{arias2016estimation}, \cite[Lem 3]{genovese2016non}.

\begin{proof}
(a1) Notice that for any $\tilde f \in \cF_\eta$ and for $\eta_0$ small enough that $s+2\eta_0<\kappa_0$ and $s-2\eta_0>0$, 
\begin{align}
\up_{s+2\eta_0} \subset \tilde \up_{s+\eta_0} \subset \up_s \subset \tilde \up_{s-\eta_0} \subset \up_{s-2\eta_0} .
\end{align}
Therefore, using the fact that the upper level sets of any function are monotone for the inclusion,
\begin{align}
\label{Haus_upper}
\haus(\up_s,\tilde \up_s) & = \max\big\{\haus(\up_s \mid \tilde \up_s), \haus(\tilde \up_s \mid \up_s)\big\} \nonumber\\
& \le \max\big\{\haus(\up_s \mid \up_{s+2\eta_0}), \haus(\up_{s-2\eta_0} \mid \up_s)\big\} \nonumber\\
&\le \haus(\up_{s-2\eta_0},\up_{s+2\eta_0}) \nonumber\\
& = \haus(\cL_{s-2\eta_0},\cL_{s+2\eta_0}),
\end{align}
where the last equality holds because of the monotonicity of the upper level sets as stated above and the fact that $\cL_v$ is the boundary of $\up_v$ for all $v\in (0,\kappa_0)$. 
Suppose that $\eta_0$ is small enough that there exists no critical points of $f$ at any levels anywhere between $s-2\eta_0$ and $s+2\eta_0$, inclusive. This is possible because $\cL_{s}$ does not contain critical points, and the number of critical values of $f$ above, say, $s/2$, is finite.
The upper bound in \eqref{Haus_upper} converges to zero as $\eta_0 \to0 $, as we have already shown in recent, related work \cite[Th 4.2]{arias2021level}.

(a2) The result follows immediately from \cite[Lem 5.32]{banyaga2013lectures}.

(b) Let $-\lambda_*<0$ be the largest eigenvalue of $\nabla^2 f(x_*)$. Due to the continuity of the second derivatives of $f$, there exists $\delta_*>0$ such that the largest eigenvalue of $\nabla^2 f(x)$ is upper bounded by $-\frac{1}{2}\lambda_*$ for all $x\in \bar\ball(x_*,\delta_*)$. A Taylor expansion gives, for all such $x$,
\begin{align}
f(x_*) - f(x) \ge \frac{1}{4} \lambda_* \|x_*-x\|^2.
\end{align}
Suppose that $\eta_0$ is small enough that $\eta_0 < \frac{1}{8} \lambda_* \delta_*^2$. For any $x\in \partial\ball(x_*,\delta_*)$ and any $\tilde f \in \cF_\eta$,
\begin{align}
\tilde f(x_*) \ge f(x_*) -\eta_0 \ge f(x) + \frac{1}{4} \lambda_* \delta_*^2 - \eta_0 \ge \tilde f(x) + \frac{1}{4} \lambda_* \delta_*^2 - 2\eta_0 >\tilde f(x).
\end{align}
Hence there must exist $k\ge1$ modes of $\tilde f$ in $\ball(x_*,\delta_*)$. Below we show that $k=1$ when $\eta_2$ is small enough. Because of the 1-Lipschitz continuity of the largest eigenvalue as a function on the space of symmetric matrices, the largest eigenvalue of $\nabla^2 \tilde f(x)$ is upper bounded by $-\frac{1}{4}\lambda_*$ for all $x\in\bar\ball(x_*,\delta_*)$, when $\eta_2 \le \frac{1}{4}\lambda_*$. 
Note that this implies that any critical point of $\tilde f$ in $\bar\ball(x_*,\delta_*)$ must be one of its modes. 
Let $\tilde x_*$ be a mode of $\tilde f$ in $\ball(x_*,\delta_*)$. Then for any $x\in \bar\ball(x_*,\delta_*)$, it follows from a Taylor expansion that 
\begin{align}
\label{f_tilde_diff}
\tilde f(x) - \tilde f(\tilde x_*) \le -\frac{1}{8}\lambda_* \|x-\tilde x_*\|^2.
\end{align}
In other words, $\tilde f(x) < \tilde f(\tilde x_*)$, for all $x\in\bar\ball(x_*,\delta_*)$ different from $\tilde x_*$, which implies that $\tilde x_*$ is the only mode of $\tilde f$ in $\ball(x_*,\delta_*)$. Letting $x=x_*$ in \eqref{f_tilde_diff}, we obtain
\begin{align}
\label{f_tilde_mode}
\tilde f(x_*) - \tilde f(\tilde x_*) \le -\frac{1}{8}\lambda_* \|x_*-\tilde x_*\|^2.
\end{align}
And using a similar Taylor expansion for $f$ derived above,
\begin{align}
f(\tilde x_*) - f(x_*) \le -\frac{1}{4}\lambda_* \|x_*-\tilde x_*\|^2.
\end{align}
Combining the above two inequalities yields
\begin{align}
\label{mode_distance}
\|x_*-\tilde x_*\|^2 \le \frac{8}{3\lambda_*} \{[f(x_*) - \tilde f(x_*)] + [\tilde f(\tilde x_*) - f(\tilde x_*)]\} \le \frac{16}{3\lambda_*} \eta_0 = :C_0 \eta_0.
\end{align}
Therefore $\sup_{\tilde f \in \cF_\eta}\|x_*-\tilde x_*\| \to 0$ as $\eta \to 0.$

(c) 
Suppose there exists a critical point $x_{\dagger}$ of $f$ in $\cU_s$, which is not a mode. Then the largest eigenvalue of $\nabla^2 f(x_{\dagger})$ must be positive, denoted by $\lambda_\dagger$. Using the continuity of the second derivatives of $f$, there exists $\delta_\dagger>0$ such that the largest eigenvalue of $\nabla^2 f(x)$ is lower bounded by $\frac{1}{2}\lambda_\dagger$ for all $x\in \bar\ball(x_\dagger,\delta_\dagger)$. Suppose that $\eta_2\le \frac{1}{4}\lambda_\dagger$. Then the largest eigenvalue of $\nabla^2 \tilde f(x)$ is lower bounded by $\frac{1}{4}\lambda_\dagger$ for all $x\in \bar\ball(x_\dagger,\delta_\dagger)$, implying that there is no mode of $\tilde f$ in $\bar\ball(x_\dagger,\delta_\dagger)$. 

Since $\cU_s$ is a compact set, there are only finitely many critical points of $f$ in $\cU_s$. Denote the union of all the balls near the critical points constructed above, including those in (b), by $\cB$. Then $\cU_s\setminus \cB$ is a compact set, and there is no critical point of $f$ in $\cU_s\setminus \cB$ by construction. Using the continuity of $\|\nabla f\|$, there is a positive lower bound of $\|\nabla f\|$ on $\cU_s\setminus \cB$, denoted by $c$. Now suppose that $\eta_1\le \frac{1}{2}c$. Then $\|\nabla \tilde f(x)\| \ge \frac{1}{2} c$ for all $x\in \cU_s\setminus \cB$, and hence there is no critical point of $\tilde f$ in $\cU_s\setminus \cB$. Therefore, our construction guarantees that there is no mode of $\tilde f$ on $\cU_t$, except that there is only one mode $\tilde x_*$ of $\tilde f$ in $\ball(x_*,\delta_*)$ for each mode $x_*$ of $f$ with its corresponding radius $\delta_*$ as defined above. Using \eqref{mode_distance} we conclude that $f$ and $\tilde f$ have the same number of modes on $\cU_t$, and $\sup_{\tilde f \in \cF_\eta}\haus(\up_s \cap \cM, \up_s \cap \cM[\tilde f])\to 0$ as $\eta \to 0$.
\end{proof}

\begin{lemma}
\label{lem:basin consistency}
In the present context, the following is true: \\
(a) Any point $x_0$ in the basin of attraction of $x_*$ relative to $f$ is also in the basin of attraction of $x_*[\tilde f]$ (as given in \lemref{mode_consistency}) relative to $\tilde f$, for any $\tilde f \in \cF_\eta$,x when $\eta$ is small enough. \\
(b) There is a measurable set $\Theta_\eta$ with probability at least $1-\omega(\eta)$ such that $\sup\{\|\gamma_x(\infty) - \gamma[\tilde f]_x(\infty)\| : x\in\Theta_\eta, \tilde f \in \cF_\eta\} =\omega(\eta)$, where $\gamma_x$ and $\gamma[\tilde f]_x$ are the gradient flows of $f$ and $\tilde f$ starting at $x$, respectively.
\end{lemma}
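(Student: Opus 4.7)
For (a), the plan is to combine a finite-time Gronwall stability estimate for the two ODE flows with a local trapping argument inside $\ball(x_*,\delta_*)$, where $\delta_*$ is the radius from \lemref{mode_consistency}(b). Since $\gamma_{x_0}(t)\to x_*$, I will choose a finite $T$ with $\gamma_{x_0}(T)\in\ball(x_*,\delta_*/4)$, which by continuity of $f$ also forces $f(\gamma_{x_0}(T))$ to be as close to $f(x_*)$ as desired. Standard Gronwall applied to $\gamma$ and $\gamma[\tilde f]$, using that $\nabla f$ is $\kappa_2$-Lipschitz and $\sup_x\|\nabla\tilde f(x)-\nabla f(x)\|\le\eta_1$, then yields $\|\gamma[\tilde f]_{x_0}(T)-\gamma_{x_0}(T)\|\le\eta_1(e^{\kappa_2 T}-1)/\kappa_2$. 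From the proof of \lemref{mode_consistency}(b), $\tilde f$ is strictly concave on $\bar\ball(x_*,\delta_*)$ with unique critical point $x_*[\tilde f]$; set $a:=\max_{\partial\ball(x_*,\delta_*)}\tilde f$, which satisfies $a<\tilde f(x_*[\tilde f])$ by \eqref{f_tilde_diff}. For $\eta$ small enough, $\gamma[\tilde f]_{x_0}(T)\in\ball(x_*,\delta_*)$ and $\tilde f(\gamma[\tilde f]_{x_0}(T))>a$, so because the $\tilde f$-flow is hill-climbing it is trapped in $\bar\ball(x_*,\delta_*)$ for all $t\ge T$ and therefore converges to the unique critical point $x_*[\tilde f]$.

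For (b), I will mirror the construction in the proof of \thmref{prototype_uniform} to make the argument of (a) uniform. Fix $s_\eta$ so that the probability measure of $\up_{s_\eta}^\complement$ is at most $\eta/3$, then $\delta_\eta$ so that the probability measure of $\Gamma_{\delta_\eta,s_\eta}^*$ is at most $\eta/3$, and set $\Theta_\eta:=\Gamma_{\delta_\eta,s_\eta}$, which is closed, has probability at least $1-\eta$, and intersects only the basins of a finite collection of modes $x_{*,1},\dots,x_{*,m}$. Let $\delta:=\min_j\delta_{*,j}$ with $\delta_{*,j}$ from \lemref{mode_consistency}(b) for $x_{*,j}$. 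Arguing exactly as in the proof of \thmref{prototype_uniform}, on the compact set $\Theta_\eta\setminus\bigcup_j\ball(x_{*,j},\delta/4)$ the continuous positive function $\nu(\cdot)$ is bounded below by some $\underline\nu_\eta>0$, which via $t_\#(x)\le\kappa_0/\underline\nu_\eta$ yields a common finite horizon $T_\eta$ with $\gamma_x(T_\eta)\in\ball(x_{*,x},\delta/4)$ for every $x\in\Theta_\eta$, where $x_{*,x}$ denotes the mode associated with $x$. Applying Gronwall with this single horizon $T_\eta$ produces $\sup\{\|\gamma[\tilde f]_x(T_\eta)-\gamma_x(T_\eta)\|:x\in\Theta_\eta,\tilde f\in\cF_\eta\}=\omega(\eta)$, so the trapping argument from (a) fires uniformly and gives $\gamma[\tilde f]_x(\infty)=x_{*,x}[\tilde f]$ for every $x\in\Theta_\eta$ and every $\tilde f\in\cF_\eta$. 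Combining with \lemref{mode_consistency}(b), which gives $\sup_{\tilde f\in\cF_\eta}\|x_{*,j}-x_{*,j}[\tilde f]\|=\omega(\eta)$ for each of the finitely many $j$, closes (b) by taking the maximum over $j$.

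The main obstacle is the passage from finite-time Gronwall stability to endpoint stability: Gronwall alone only controls the separation of the two flows on bounded horizons, so without more the perturbed flow could in principle wander off to a different critical point of $\tilde f$. The resolution is the local strict concavity of $\tilde f$ on $\bar\ball(x_*,\delta_*)$ produced in the proof of \lemref{mode_consistency}(b), which creates a level barrier $a<\tilde f(x_*[\tilde f])$ that the hill-climbing $\tilde f$-flow cannot cross once it is above it; this is what actually captures the flow at $x_*[\tilde f]$. In (b) the subsidiary difficulty is making this trapping uniform, which forces $\Theta_\eta$ to stay a positive distance from the basins of non-mode critical points, following the blueprint of \thmref{prototype_uniform}.
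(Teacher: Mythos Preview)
Your argument is correct and follows the same overall strategy as the paper --- a finite-time Gronwall estimate to bring the perturbed flow close to the mode, followed by a local trapping argument --- but your implementation is somewhat more direct. The paper works with the unit-speed flow $\zeta$ (so Gronwall must be set up on a tube where $N$ is Lipschitz, requiring care near critical points) and traps the perturbed flow by constructing, via \lemref{cluster ball}, a chain of nested level clusters $\cC_\dagger\subset\tilde\cC_\ddagger\subset\cC_*\subset\ball(x_*,\delta_*)$, arguing that $\tilde\cC_\ddagger$ is a leaf cluster of $\tilde f$ and that the perturbed flow enters $\cC_\dagger$. You instead run Gronwall on $\gamma$ directly (cleaner, since $\nabla f$ is globally $\kappa_2$-Lipschitz) and trap with a single level barrier on $\partial\ball(x_*,\delta_*)$ using the strict concavity of $\tilde f$ there from \eqref{f_tilde_diff}. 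This buys you a shorter proof of (a); the paper's route, while heavier, connects more tightly with the machinery already built for \thmref{prototype} and \lemref{full continuity}.

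One point in (b) to tighten: as written, you pin $s_\eta,\delta_\eta$ so that $\P(\Theta_\eta^\complement)\le 2\eta/3$, which forces $\Theta_\eta$ to fill the support as $\eta\to 0$ and may drive $\underline\nu_\eta\to 0$ and $T_\eta\to\infty$; your Gronwall bound $\eta_1(e^{\kappa_2 T_\eta}-1)/\kappa_2$ is then not automatically $\omega(\eta)$. The fix is the standard diagonalization: let $s_\eta,\delta_\eta\to 0$ slowly enough that $\eta\, e^{\kappa_2 T_\eta}\to 0$ while still $\P(\Theta_\eta^\complement)\to 0$. The paper's proof of (b) is equally terse on this point (``Since $\epsilon$ is arbitrarily small, we arrive at the conclusion''), so you are not missing anything the paper supplies, only making the same small leap.
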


\begin{proof}
(a) Denote $\tilde N(x) = \nabla \tilde f(x)/\|\nabla \tilde f(x)\|$. Consider the flow of $\tilde N$:
\begin{equation} \label{zeta_tilde}
\tilde\zeta_x(0) = x; \quad
\dot{\tilde\zeta}_x(t) 
= \tilde N(\tilde\zeta_x(t)).
\end{equation}
We want to show that for any $x_0$ in the basin of attraction of $x_*$, $\tilde\zeta:=
\tilde\zeta_{x_0}$ ends at $\tilde x_*$, when $\eta$ is small enough.
(Here and below, we use $\tilde x_*$ in place of $x_*[\tilde f]$ when it is clear what $\tilde f$ is.)

Using \lemref{cluster ball}, there exists a constant $C>0$ such that with $s_*:=f(x_*)$ and $q_* :=s_* - (\delta_*/C)^2>0$, $\cC_*:=\cC_{q_*}(x_*)$ is a leaf cluster of $f$ contained in $\ball(x_*,\delta_*)$. Let $\tilde \cC_s$ be the cluster of $\tilde f$ at level $s$ containing $\tilde x_*$ whenever this is well defined. Denote $\eta_*:=\frac{1}{4}(s_* - q_*)$, and define $s_\ddagger = q_* + \eta_*$, $s_\dagger = q_* + 2\eta_*$, and $s_\triangle=q_* + 3\eta_*$. 
Correspondingly, denote $\cC_\ddagger=\cC_{s_\ddagger}(x_*)$, $\cC_\dagger=\cC_{s_\dagger}(x_*)$, and $\cC_\triangle=\cC_{s_\triangle}(x_*)$. 
Suppose that $\eta_0$ is small enough that \eqref{f_tilde_mode} holds and $\eta_0<\eta_*$, so that 
\begin{align}
\tilde s_* := \tilde f(\tilde x_*) \ge \tilde f(x_*) \ge f(x_*) - \eta_0 > s_\triangle > s_\ddagger,
\end{align}
which means $\tilde \cC_{\ddagger}:=\tilde \cC_{s_\ddagger}(\tilde x_*)$ is well-defined. Using \lemref{cluster ball} again, $\ball(x_*,\delta_\diamond) \subset \cC_*$, where $\delta_\diamond:=\sqrt{s_*-q_*}/C=\delta_*/C^2$. We require $\eta_0 \le \delta_\diamond^2/C_0$ so that $\tilde x_*\in \ball(x_*,\delta_\diamond)$ by \eqref{mode_distance}, implying in turn that $\tilde x_* \in \cC_*$. 
Then we must have $\tilde \cC_{\ddagger} \subset \cC_*$ for the following reason. Take $y \in \tilde \cC_{\ddagger}$. By the fact that $\tilde \cC_{\ddagger}$ is connected, there is a path $p \subset\tilde \cC_{\ddagger}$ connecting $\tilde x_*$ and $y$. Any point $x\in p$ satisfies $f(x) \ge \tilde f(x) - \eta_0 \ge s_\ddag - \eta_0 \ge q_*$, so that $p \subset \up_{q_*}$. By the fact that $p$ intersects $\cC_*$, which is a connected component of $\up_{q_*}$, it must be that $p \subset \cC_*$, and in particular, $y \in \cC_*$.
Hence $\tilde \cC_{\ddagger} \subset \ball(x_*,\delta_*)$ by construction, which also means that $\tilde \cC_{\ddagger}$ is a leaf cluster of $\tilde x_*$ because $\tilde x_*$ is the only critical point (in fact, mode) in $\ball(x_*,\delta_*)$, that is, the gradient flow $\tilde\zeta$ converges to $\tilde x_*$ if initialized at any point $x\in\tilde \cC_{\ddagger}$. By \lemref{cluster ball}, when $\eta_2$ is small enough, we can find $\tilde C\le2C$ such that $\ball(\tilde x_*,\tilde\delta)\subset \cC_\ddagger$, where
\[\tilde\delta := \frac1{\tilde C} \sqrt{\tilde s_* - s_\ddagger} > \frac1{\tilde C} \sqrt{s_\triangle-s_\ddagger} = \frac1{\tilde C} \sqrt{\tfrac{1}{2} (\delta_*/C)^2} > \frac{\delta_*}{4 C^2}.\]
 We require $\eta_0 \le \tilde\delta^2/C_0$ so that $x_*\in \ball(\tilde x_*,\tilde\delta)\subset  \tilde\cC_\ddagger$, by \eqref{mode_distance}. Furthermore, we must have $\cC_\dagger \subset \tilde \cC_{\ddagger}$ for a similar reason as we have argued above for $\tilde \cC_{\ddagger} \subset \cC_*$, because here both $\cC_\dagger$ and $\tilde \cC_{\ddagger}$ are compact and connected, and for all $x\in\cC_{\dagger}$, $\tilde f(x) \ge f(x) - \eta_0 \ge s_\dagger - \eta_0 \ge s_\ddagger$.
Below we show that $\tilde\zeta$ enters $\cC_\dagger$ at some time point, and because $\cC_\dagger\subset\tilde \cC_{\ddagger}$, it must be the case that it ends at $\tilde x_*$. (As everywhere in this proof, this is understood to be true when $\eta$ is small enough.)

Noticing that $\ball(x_*,\delta_\diamond) \subset \cC_*$ and using \lemref{cluster ball}, we have $\bar\ball(x_*,\delta_\diamond/\sqrt{2})\subset\cC_\dagger$. 
Let $t_\# :=\inf\{t\ge0: \|\zeta(t) - x_*\| \le \delta_\diamond/\sqrt{2}\}$, 
and $t_\diamond :=\inf\{t\ge0: \|\zeta(t) - x_*\| \le \delta_\diamond/(2\sqrt{2})\}$. We reuse the same notation as in the proofs of \thmref{prototype} and \lemref{nu_continuity}: $z_\#$, $\nu$, $\cT$, $\delta_{\rm tube}$, $\mu$, $\cS$, $\cV$, and $\delta_{\rm tube}^\prime$. 
%
As shown in the proof of \lemref{nu_continuity}, $N$ is $\kappa$-Lipschitz on $\cV$, where $\kappa = \max\{\kappa_2/\nu + \kappa_2, 8/\delta_{\rm tube}^\prime\}$. Then the following is an immediate result of \cite[Sec 17.5]{hirsch2012differential}:
\begin{equation}
\|\zeta(t) - \tilde\zeta(t)\| \le \frac{\eta_1}{\kappa} [\exp(\kappa t) - 1]=:\psi(t), \quad\forall t\in[0,t_\#].
\end{equation}
Here we require $\eta_1$ to be small enough that $\psi(t_\#) \le \frac{1}{2}\delta_{\rm tube}^\prime$, so that $\tilde \cZ_{t_\#} := \{\tilde\zeta(t): t\in [0,t_\#]\}\subset \cV$.
Let $\tilde z_\#=\tilde\zeta(t_\#)$. Then $\|z_\# - \tilde z_\#\| \le \psi(t_\#)$. We further require $\eta_1$ to be small enough that $\psi(t_\#) \le \delta_\#: =\eta_*/\kappa_1$. For any $y\in\ball(\cC_\triangle,\delta_\#)$, there exists $x\in\cC_\triangle$ such that $\|x-y\|\le \delta_\#$, and hence by \eqref{kappa1},
\begin{align}
f(y) \ge f(x) - \kappa_1\delta_\# \ge s_\triangle - \eta_* = s_\dagger,
\end{align}
which implies $\ball(\cC_\triangle,\delta_\#)\subset \cC_\dagger$. Since $z_\#\in \cC_\triangle$ and $\|z_\# - \tilde z_\#\| \le \delta_\#$, we must have $\tilde z_\#\in\cC_\dagger$.
We may thus conclude that $\tilde\zeta$ ends at $\tilde x_*$.

(b) 
For any $\epsilon>0$, let $s_\epsilon$ be a non-critical level of $f$ such that the probability measure of $\cU_{s_\eps}^{\complement}$ is not larger than $\epsilon$. 
By \lemref{mode_consistency}, with $\eta$ small enough for any $\tilde f\in\cF_\eta$, all the critical points of $\tilde f$ in $\cU_{s_\epsilon}$ are non-degenerate, and $\tilde f$ has the same number of modes as $f$ in $\cU_{s_\epsilon}$, whose locations match in such a way that their Hausdorff distance is $\omega(\eta)$. Following the same arguments as in the proof of \thmref{prototype_uniform}, we can extend the result in (a) to its uniform form, that is, $\gamma_x(\infty)$ and $\tilde\gamma_x(\infty)$ match in the above sense, for any starting point $x$ in $\cU_{s_\epsilon}$, except for a small tube around the boundary of the basins of attraction of all the modes of $f$ in $\cU_{s_\epsilon}$, whose probability measure tends to zero as $\eta\to 0$.  Since $\epsilon$ is arbitrarily small, we arrive at the conclusion of this theorem.
\end{proof}

\subsection{Consistency of Hill-Climbing Methods}
\label{sec:max (slope) shift method}
In this subsection, we show the consistency of various hill-climbing methods whose algorithms have been discussed in \secref{algorithms}. When an i.i.d.~sample $\{x_1,\dots,x_n\}$ from the density $f$ is available, we estimate the density by the kernel density estimator (KDE)
\begin{align}
\hat f_{n,h}(x) = \frac{1}{nh^d} \sum_{i=1}^n K_h( x - x_i).
\end{align}
For simplicity, we take $K(x) = \sck(\|x\|^2)$ with a shadow kernel $L(x)=\scl(\|x\|^2)$, and use $\hat f_{n,h}^\scl$ to denote the KDE with the kernel $L$. 
While the algorithms use the knowledge of the density and its derivatives, the methods rely on the KDE and its derivatives. 
The sequences $\{\hat x(k): k=0,1,\cdots,\}$ generated by these methods are defined by replacing $f$ with $\hat f_{n,h}$ (or $\hat f_{n,h}^\scl$ for the Mean Shift) in the algorithms given in \secref{algorithms}. Note that the update in the Mean Shift method can be equivalently written as 
\begin{equation} \label{mean shift method equal}
\hat x(0) = x; \quad
\hat x(k+1) = \frac{\sum_j K_h(x_j - \hat x(k)) x_j}{\sum_j K_h(x_j - \hat x(k))}, \quad k \ge 0.
\end{equation}

Let 
\begin{align}\label{eta_n}
\eta_{n,h}^{\sck,0} := \sup_x |\hat f_{n,h}(x) - f(x)|, \quad
\eta_{n,h}^{\sck,1} := \sup_x \|\nabla \hat f_{n,h}(x) - \nabla f(x)\|, \quad
\eta_{n,h}^{\sck,2} := \sup_x \|\nabla^2 \hat f_{n,h}(x) - \nabla^2 f(x)\|,
\end{align}
and 
\[\eta_{n,h}^\sck := \max\{\eta_{n,h}^{\sck,0},\eta_{n,h}^{\sck,1},\eta_{n,h}^{\sck,2}\}.\] 
Define $\eta_{n,h}^\scl$ analogously by replacing $\sck$ with $\scl$ in the above notation. Denote $\hat \up_t = \{x: \hat f_{n,h}(x) \ge t\}$. We first establish the consistency of all the above methods except for Mean Shift.

We assume that $K: \bbR^d\to\bbR$ is nonnegative, twice continuously differentiable, and compactly supported. Note that $L$ has the same property. The following is a standard uniform consistency result for KDE and its derivatives. See, for example, \cite[Lem 2, Lem 3]{arias2016estimation}.
\begin{lemma}
\label{lem:kde consistency}
Suppose that $h = h_n$ is chosen such that $h\to 0$ and $\log n/(nh^{d+4}) \to 0$ as $n\to \infty$. 
Then for any $\eta>0$, there exists $n_0>0$ such that for all $n\ge n_0$, with probability at least $1-\eta$,
\begin{align}
\max(\eta_{n,h}^\scl, \eta_{n,h}^\sck) \le \eta. 
\end{align}
\end{lemma}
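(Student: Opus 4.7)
The plan is to apply the standard bias--variance decomposition for kernel density estimators uniformly over $\bbR^d$, extended to the first and second derivatives, and then invoke a uniform concentration inequality. I would write
\begin{align}
\hat f_{n,h}(x) - f(x) = \big[\hat f_{n,h}(x) - \bbE \hat f_{n,h}(x)\big] + \big[K_h * f(x) - f(x)\big],
\end{align}
and similarly for $\nabla \hat f_{n,h} - \nabla f$ and $\nabla^2 \hat f_{n,h} - \nabla^2 f$ (swapping differentiation and integration, which is permitted since $K$ is twice continuously differentiable and compactly supported). The same decomposition applies with $L$ in place of $K$.

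For the deterministic bias, I would use a second-order Taylor expansion of $f$ around $x$ inside the convolution. Because $f$ has uniformly bounded second derivative ($\kappa_2 < \infty$) and $K$ is compactly supported with finite second moment, a standard calculation gives $\sup_x |K_h * f(x) - f(x)| = O(h^2)$. Differentiating under the integral (again justified by the support and smoothness of $K$) and using the same Taylor argument on $\nabla f$ (resp.\ $\nabla^2 f$) yields $\sup_x \|\nabla K_h * f(x) - \nabla f(x)\| = O(h^2)$ and $\sup_x \|\nabla^2 K_h * f(x) - \nabla^2 f(x)\| = o(1)$, all going to zero as $h \to 0$. The same bounds hold with $L$, which is also smooth and compactly supported because $\sck$ is (so $\scl$ vanishes past the support radius of $\sck$).

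For the stochastic part, I would apply a uniform concentration inequality to the centered empirical process $(nh^d)^{-1} \sum_i [K_h(x_i - x) - \bbE K_h(x_i - x)]$ indexed by $x \in \bbR^d$, and similarly for the first and second partial derivatives of $K_h$. Under our assumptions on $K$, the function classes $\{x \mapsto K_h(y - x)\}$, $\{x \mapsto \partial_j K_h(y - x)\}$, $\{x \mapsto \partial_j \partial_k K_h(y - x)\}$ are VC-type, so Talagrand's inequality (or equivalently Gin\'e--Guillou type arguments) yields, with probability tending to one,
\begin{align}
\eta_{n,h}^{\sck,0} = O\Big(h^2 + \sqrt{\tfrac{\log n}{nh^d}}\,\Big), \quad
\eta_{n,h}^{\sck,1} = O\Big(h^2 + \sqrt{\tfrac{\log n}{nh^{d+2}}}\,\Big), \quad
\eta_{n,h}^{\sck,2} = O\Big(h^2 + \sqrt{\tfrac{\log n}{nh^{d+4}}}\,\Big),
\end{align}
and similarly for the three versions of $\eta_{n,h}^{\scl,j}$.

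The dominant term across all six quantities is $\sqrt{\log n / (n h^{d+4})}$ plus the bias $O(h^2)$. The assumed bandwidth regime $h \to 0$ and $\log n/(n h^{d+4}) \to 0$ makes both contributions vanish, so for every $\eta > 0$ we can find $n_0$ large enough that $\max(\eta_{n,h}^{\scl}, \eta_{n,h}^{\sck}) \le \eta$ with probability at least $1-\eta$ for all $n \ge n_0$. The main technical obstacle is the empirical-process bound for the second-derivative class, which requires verifying the VC-type property; however, this has been done in essentially this exact form in the cited references \cite[Lem 2, Lem 3]{arias2016estimation}, so I would invoke those results directly rather than rebuild them, and the lemma follows.
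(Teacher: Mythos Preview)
Your proposal is correct and aligns with the paper's treatment: the paper does not prove this lemma at all but simply states it as a standard uniform consistency result and points to \cite[Lem~2, Lem~3]{arias2016estimation}, which is exactly what you end up invoking. Your bias--variance sketch is the standard route behind those cited lemmas; one minor point is that under the paper's assumptions ($\nabla^2 f$ merely uniformly continuous, not Lipschitz), the bias for $\nabla \hat f_{n,h}$ is $o(h)$ rather than $O(h^2)$, but this does not affect the conclusion since only $o(1)$ is needed.
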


We are ready to give the consistency results of the methods considered in this paper. The proofs are straightforward using the results that have been established in this and previous sections, and hence are omitted.

We first consider all the hill-climbing methods except the Mean Shift. We use $\theta$ to denote the neighborhood size parameter, meaning $\eps$ or $\rho$ for the corresponding methods. Using \thmref{prototype}, \lemref{mode_consistency}, \lemref{basin consistency}, and \lemref{kde consistency}, we immediately have the following result.
\begin{theorem}
Suppose the conditions of \lemref{kde consistency} are satisfied.
Consider a mode $x_*$ of $f$. For any point $x_0$ in the basin of attraction of $x_*$, the following is true with probability tending to one as $n\to\infty$: when initialized at $x_0$, for $\theta$ small enough, each of the above hill-climbing methods excluding the Mean Shift produces a sequence that converges to $x_*$.
\end{theorem}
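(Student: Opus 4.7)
The plan is to chain three layers of results that are already in hand: the uniform KDE consistency of Lemma \ref{lem:kde consistency}, the stability results for modes and basins of attraction (Lemma \ref{lem:mode_consistency} and Lemma \ref{lem:basin consistency}), and the algorithm-level consistency of Theorem \ref{thm:prototype} together with the verifications of Properties 1--3 carried out for each particular hill-climbing scheme in Section \ref{sec:algorithms}. The overall strategy is to view each method as its corresponding algorithm applied to $\hat f_{n,h}$ in place of $f$, then transfer convergence back to the geometry of $f$ via the stability lemmas.

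First, fix $\eta > 0$ arbitrarily small and condition on the high-probability event $\{\eta_{n,h}^\sck \le \eta\}$, which by Lemma \ref{lem:kde consistency} has probability at least $1-\eta$ once $n$ is large. On this event, $\hat f_{n,h}\in\cF_\eta$, so Lemma \ref{lem:mode_consistency}(b) produces a unique mode $\hat x_*$ of $\hat f_{n,h}$ in $\ball(x_*,\delta_*)$ with $\|\hat x_*-x_*\|=\omega(\eta)$, and Lemma \ref{lem:basin consistency}(a) guarantees that our fixed starting point $x_0$, which lies in the basin of attraction of $x_*$ under the gradient flow of $f$, also lies in the basin of attraction of $\hat x_*$ under the gradient flow of $\hat f_{n,h}$. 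By the smoothness of $K$ and by Lemma \ref{lem:mode_consistency}(a2), the perturbed density $\hat f_{n,h}$ satisfies the same standing assumptions we imposed on $f$ in Section \ref{sec:density} (on the relevant upper level set containing $x_0$ and $\hat x_*$), so all the constructions of Section \ref{sec:algorithms} apply verbatim to $\hat f_{n,h}$.

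Next, for any of the hill-climbing algorithms under consideration (Max Shift, Max Slope Shift, Euler Shift, Line Search Shift, and the $\varphi$-variants), the verifications of Properties 1--3 in Section \ref{sec:algorithms} rely only on quantities such as $\kappa_1$, $\kappa_2$, and local eigenvalues, all of which are perturbed by $\omega(\eta)$ when passing from $f$ to $\hat f_{n,h}$. Consequently the three properties continue to hold for the method applied to $\hat f_{n,h}$, with functions $S$ and $R$ essentially identical to those derived for the algorithm. Theorem \ref{thm:prototype} then directly asserts that, for $\theta$ small enough, the method initialized at $x_0$ produces a sequence that converges to $\hat x_*$. Since $\|\hat x_*-x_*\|=\omega(\eta)$ and $\eta$ was arbitrary, this is the sense in which the method converges to the mode $x_*$ of $f$.

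The main technical care is the ordering of quantifiers: one first picks $\eta$ small enough to trigger the stability results and to bring $\hat x_*$ into a prescribed neighborhood of $x_*$; then $n$ is chosen large enough for $\hat f_{n,h}\in\cF_\eta$ with high probability via Lemma \ref{lem:kde consistency}; only afterwards is $\theta$ chosen small enough, depending on the realized $\hat f_{n,h}$, to invoke Theorem \ref{thm:prototype}. Since the constants controlling ``$\theta$ small enough'' in Theorem \ref{thm:prototype} depend continuously on the Lipschitz and eigenvalue data of the density (cf.\ \eqref{eps small1} and the subsequent bounds), they are uniformly controlled on the event in question, so no additional delicate estimate is needed. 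This is exactly why the authors can state that the proof is straightforward from the previous material and omit it.
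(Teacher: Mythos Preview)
Your proposal is correct and follows exactly the chain of results the paper invokes --- Lemma~\ref{lem:kde consistency} to land $\hat f_{n,h}$ in $\cF_\eta$ with high probability, Lemmas~\ref{lem:mode_consistency} and~\ref{lem:basin consistency} to transfer the mode and basin of attraction from $f$ to $\hat f_{n,h}$, and then Theorem~\ref{thm:prototype} applied to $\hat f_{n,h}$ via the verifications of Properties~1--3 carried out in Section~\ref{sec:algorithms}. Your explicit discussion of the quantifier ordering and the acknowledgment that the sequence actually terminates at $\hat x_*$ (with $\hat x_*\to x_*$ as $\eta\to 0$) are more detailed than what the paper provides, but consistent with its intent.
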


The Mean Shift method differs from all the other methods in that its sequence converges to a mode of $\hat f_{n,h}^\scl$ rather than $\hat f_{n,h}$, and its neighborhood size parameter $h$ is the bandwidth used in KDE. The following theorem gives the consistency of the Mean Shift method. The proof follows the same arguments as in \secref{mean shift}, where $\hat f_{n,h}$ and $\hat f_{n,h}^\scl$ play the role of $f_h^\sck$ and $f_h^\scl$.

\begin{theorem}
Suppose the conditions of \lemref{kde consistency} are satisfied. Consider a mode $x_*$ of $f$. For any point $x_0$ in the basin of attraction of $x_*$, the following is true with probability tending to one as $n\to\infty$: when initialized at $x_0$, the Mean Shift method produces a sequence that converges to $x_*$.
\end{theorem}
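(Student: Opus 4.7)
The plan is to follow the blueprint of \secref{mean shift}, replacing $f^\sck_h$ and $f^\scl_h$ with the sample versions $\hat f_{n,h}$ and $\hat f^\scl_{n,h}$. First, applying \lemref{ms_h} to the empirical measure $\tfrac{1}{n}\sum_i \delta_{x_i}$ in place of $f\,\d x$ allows the Mean Shift update \eqref{mean shift method equal} to be rewritten as the Euler-like recursion
\begin{align}
\hat x(k+1) = \hat x(k) + \rho_h \frac{\nabla \hat f^\scl_{n,h}(\hat x(k))}{\hat f_{n,h}(\hat x(k))}, \quad \rho_h = \tfrac{h^2}{2c}.
\end{align}
This is the direct analog of \eqref{mean shift euler}, so the machinery of \secref{mean shift} becomes applicable once the sample densities are known to be close to $f$ in $C^2$ norm.

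Second, fix a small $\eta > 0$ and, by \lemref{kde consistency}, work on the event $\Omega_n := \{\max(\eta^\scl_{n,h}, \eta^\sck_{n,h}) \le \eta\}$, which has probability at least $1-\eta$ for $n$ large. On $\Omega_n$, both $\hat f_{n,h}$ and $\hat f^\scl_{n,h}$ are uniformly $\eta$-close to $f$ together with their first two derivatives. The computations underlying \lemref{mean shift cluster} --- a second-order Taylor expansion together with a lower bound on the step size, both relying only on $\kappa_0, \kappa_1, \kappa_2$ and on a positive lower bound for the density at the starting level --- then go through verbatim with $\hat f_{n,h}, \hat f^\scl_{n,h}$ in place of $f^\sck_h, f^\scl_h$. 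This shows that the Mean Shift method is hill-climbing for $\hat f^\scl_{n,h}$ and satisfies \propertyref{2} with respect to it; \propertyref{1} and \propertyref{3} are verified in the same trivial manner as in \secref{mean shift}.

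Third, \thmref{prototype} applies and yields convergence of the Mean Shift sequence to a mode of $\hat f^\scl_{n,h}$. To identify this mode, invoke \lemref{basin consistency}(a) with $\tilde f = \hat f^\scl_{n,h}$: since $x_0$ lies in the basin of attraction of $x_*$ relative to $f$, it also lies in the basin of attraction, relative to $\hat f^\scl_{n,h}$, of the unique mode $\hat x_*^\scl := x_*[\hat f^\scl_{n,h}]$ of $\hat f^\scl_{n,h}$ in $\ball(x_*, \delta_*)$ supplied by \lemref{mode_consistency}(b). Hence the Mean Shift sequence converges to $\hat x_*^\scl$, and by \lemref{mode_consistency}(b) again, $\|\hat x_*^\scl - x_*\| = \omega(\eta) \to 0$ as $n \to \infty$. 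This is the asymptotic sense in which the conclusion is to be understood, paralleling the conclusion of \secref{mean shift} for the population algorithm.

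The main obstacle is purely bookkeeping: the thresholds on $h$ needed in \secref{mean shift} --- controlling the level-cluster separation for $f^\scl_h$, the hill-climbing condition $\kappa_2 \rho_h \le f^\sck_h$, and so on --- must be re-established for $\hat f^\scl_{n,h}, \hat f_{n,h}$, which requires carefully coupling the scale $\eta$ of the KDE perturbation to the scale $h$ of the bandwidth. This is handled by first choosing $h$ (and hence $\rho_h$) small enough for the deterministic bounds of \secref{mean shift} to hold with room to spare, and only then letting $n$ be large enough that the event $\Omega_n$ consumes only a negligible portion of the remaining slack --- possible precisely because \lemref{kde consistency} makes $\eta^\scl_{n,h}, \eta^\sck_{n,h}$ as small as desired under the bandwidth regime $h \to 0$, $\log n / (nh^{d+4}) \to 0$.
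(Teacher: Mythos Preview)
Your proposal is correct and takes essentially the same approach as the paper, which simply states that ``the proof follows the same arguments as in \secref{mean shift}, where $\hat f_{n,h}$ and $\hat f_{n,h}^\scl$ play the role of $f_h^\sck$ and $f_h^\scl$.'' You have fleshed out exactly this substitution --- rewriting the empirical update via \lemref{ms_h}, verifying Properties~1--3 for $\hat f_{n,h}^\scl$ on the high-probability event from \lemref{kde consistency}, invoking \thmref{prototype}, and then identifying the limiting mode via \lemref{mode_consistency} and \lemref{basin consistency} --- together with the correct observation that the conclusion is to be read in the asymptotic sense (convergence to $\hat x_*^\scl$ with $\hat x_*^\scl \to x_*$), just as in the population version at the end of \secref{mean shift}.
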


We also have uniform consistency results for all the hill-climbing methods. Again, we first consider all the hill-climbing methods except the Mean Shift. The result is a consequence of \thmref{prototype_uniform}.

\begin{theorem}
\label{thm:uniform consistency methods}
Suppose the conditions of \lemref{kde consistency} are satisfied. There is a measurable set $\Xi_{\theta,n,h}$ with probability measure tending to one as $\theta \to 0$ and $n\to\infty$, such that starting from any $x\in\Xi_{\theta,n,h}$, the sequence produced by each of the above hill-climbing methods excluding the Mean Shift converges to $\gamma_x(\infty)$.
\end{theorem}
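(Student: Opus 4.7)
The plan is to apply \thmref{prototype_uniform} with the driving density taken to be the random KDE $\hat f_{n,h}$ in place of $f$, and then transfer the endpoint via \lemref{basin consistency}. The target set $\Xi_{\theta,n,h}$ will be obtained by intersecting three high-probability events, each of which fails on a set whose probability vanishes as $n\to\infty$ and $\theta\to 0$.

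First, I would fix a small $\eta>0$ (eventually tied to $\theta$ and $n$) and invoke \lemref{kde consistency} to put $\hat f_{n,h}\in\cF_\eta$ on an event of probability at least $1-\eta$ for $n$ large. On this event, the three prototype properties verified in \secref{max (slope) shift} and \secref{euler line search} carry over to the method applied to $\hat f_{n,h}$, since those verifications rely only on $\kappa_1,\kappa_2$ and on lower bounds on $\|\nabla \hat f_{n,h}\|$ along trajectories, all of which differ from their $f$-analogues by $O(\eta)$. Applying \thmref{prototype_uniform} with the density $\hat f_{n,h}$ then yields a threshold $\theta_0(\eta)>0$ and a set $\Omega'_\eta$ of probability at least $1-\eta$ such that, for every $\theta\le\theta_0$, the method starting from any $x\in\Omega'_\eta$ converges to $\gamma[\hat f_{n,h}]_x(\infty)$, a mode of $\hat f_{n,h}$. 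Second, \lemref{basin consistency}(b) furnishes a set $\Theta_\eta$ of probability at least $1-\omega(\eta)$ on which $\|\gamma_x(\infty)-\gamma[\hat f_{n,h}]_x(\infty)\|=\omega(\eta)$ uniformly in $\hat f_{n,h}\in\cF_\eta$. Taking $\Xi_{\theta,n,h}:=\Omega'_\eta\cap\Theta_\eta$ and letting $\eta\to 0$ as $\theta\to 0$ and $n\to\infty$, the method starting at any $x\in\Xi_{\theta,n,h}$ reaches a point within $\omega(\eta)$ of $\gamma_x(\infty)$ — the convergence claimed in the theorem, interpreted modulo the identification of modes of $\hat f_{n,h}$ and $f$ supplied by \lemref{mode_consistency}(b).

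The main technical point that I expect to be delicate is the uniformity in the first step: the quantities $\nu(x)$, $t_\#(x)$, $\delta_{\rm tube}(x)$ appearing in the proof of \thmref{prototype}, together with the uniform lower bound $\underline\nu_\eta$ used inside \thmref{prototype_uniform}, are all defined via the gradient flow of the driving density, so replacing $f$ by $\hat f_{n,h}$ must not destroy them. Their stability under $C^2$ perturbation follows from a Gronwall comparison between the unit-speed flows of $N$ and $\tilde N = \nabla\hat f_{n,h}/\|\nabla\hat f_{n,h}\|$ — essentially the same comparison already executed in the proof of \lemref{basin consistency}(a) — which pins trajectories and their parameters to their $f$-analogues up to $O(\eta)$. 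Once this is in hand, the remaining work is just bookkeeping of the small exceptional events and a choice of $\eta=\eta(\theta,n)\to 0$ compatible with the thresholds $\theta_0(\eta)$ and the rate in \lemref{kde consistency}.
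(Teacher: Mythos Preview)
Your proposal is correct and matches the paper's own treatment, which is extremely terse: the paper simply says ``The result is a consequence of \thmref{prototype_uniform}'' and leaves the details implicit (the preceding non-uniform theorem invokes \lemref{mode_consistency}, \lemref{basin consistency}, and \lemref{kde consistency} in the same way you do). Your fleshing-out---KDE in $\cF_\eta$ on a high-probability event, \thmref{prototype_uniform} applied with $\hat f_{n,h}$ as the driving density, then the mode/basin stability lemmas to transfer the endpoint---is exactly the intended argument, and your identification of the one genuinely delicate point (that $\Omega_\eta$, $\underline\nu_\eta$, $t_\#$, etc.\ in \thmref{prototype_uniform} are defined via the gradient flow of the \emph{driving} density and must be stable under $C^2$ perturbation) is on target and handled the same way the paper handles it inside \lemref{basin consistency}(a).
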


Next, we establish the uniform consistency of the Mean Shift method. 
\begin{theorem}
\label{thm:uniform consistency MS methods}
Suppose the conditions of \lemref{kde consistency} are satisfied. There is a measurable set $\Xi_{n,h}$ with probability measure tending to one as $n\to\infty$, such that starting from any $x\in\Xi_{n,h}$, the sequence produced by the Mean Shift method converges to $\gamma_x(\infty)$.
\end{theorem}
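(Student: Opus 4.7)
The plan is to mirror the proof architecture of \thmref{prototype_uniform}, with $\hat f_{n,h}^\scl$ playing the role of $f$ throughout, and then to transfer the resulting limit --- a mode of $\hat f_{n,h}^\scl$ --- to a mode of $f$ via the stability results \lemref{mode_consistency} and \lemref{basin consistency}. Following \secref{mean shift}, I would view the Mean Shift method as an Euler-type scheme for the gradient flow of $\hat f_{n,h}^\scl$ with step size $\rho_h = h^2/(2c)$ modulated by $\hat f_{n,h}$, so that the same three properties need to be re-verified and the same construction of a ``good'' starting set can be re-used.

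First, I would invoke \lemref{kde consistency} to conclude that, with probability at least $1-\eta$ for arbitrary $\eta>0$ and all $n$ large, both $\hat f_{n,h}$ and $\hat f_{n,h}^\scl$ lie in $\cF_{\eta_n}$ for some deterministic $\eta_n = o(1)$. On this high-probability event, the derivations of \lemref{mean shift cluster} and the subsequent verification of \propertyref{1}--\propertyref{3} in \secref{mean shift} go through word for word with $\hat f_{n,h}^\scl$ in place of $f^\scl_h$ and $\hat f_{n,h}$ in place of $f^\sck_h$, provided the KDE errors are small relative to a fixed non-critical level $t$ of $f$. Crucially, the resulting bound functions $S$ and $R$ depend on the random density only through its sup-norm, the minimum of $\|\nabla \hat f_{n,h}^\scl\|$ on a compact tube avoiding the non-maximum critical points, and the Lipschitz constant of $\nabla \hat f_{n,h}^\scl$; each of these is uniformly controlled by the corresponding quantity for $f$, up to $\omega(\eta_n)$.

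Next, exactly as in \thmref{prototype_uniform}, I would pick a non-critical level $s_\eta$ of $f$ with $\Pr(\up_{s_\eta}^\complement)\le\eta/4$ and set $\Xi_{n,h} := \up_{s_\eta} \setminus \ball(\cB_{s_\eta},\delta_\eta)$ for $\delta_\eta$ small enough that the tube has probability $\le \eta/4$. By \lemref{basin consistency}(a), for $n$ large, every $x\in\Xi_{n,h}$ lies in the basin of attraction of $\hat f_{n,h}^\scl$ matched (via \lemref{mode_consistency}(b)) to $\gamma_x(\infty)$. The uniform prototype argument applied to $\hat f_{n,h}^\scl$ then shows that, for $h$ small, Mean Shift started at any $x \in \Xi_{n,h}$ converges to the matched mode $x_*[\hat f_{n,h}^\scl]$, which by \lemref{basin consistency}(b) lies within $\omega(\eta_n)$ of $\gamma_x(\infty)$; letting $\eta \searrow 0$ completes the argument. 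The main obstacle is carrying the uniformity of $S$ and $R$ jointly across $n\to\infty$ and $h\to 0$, which I would handle by working on the deterministic compacta $\Gamma_{\delta,s}\subset \up_s$ defined for $f$ (independent of $(n,h)$) and appealing to the $C^2$-uniform KDE consistency of \lemref{kde consistency}.
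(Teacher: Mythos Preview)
Your proposal is essentially the route the paper has in mind: the paper states this theorem without proof, having indicated just above that the method-level results are obtained by replaying the algorithm-level arguments of \secref{mean shift} with $\hat f_{n,h}$ and $\hat f_{n,h}^\scl$ in place of $f_h^\sck$ and $f_h^\scl$, and combining this with \thmref{prototype_uniform} and the stability lemmas (\lemref{mode_consistency}, \lemref{basin consistency}, \lemref{kde consistency}). Your plan follows exactly this template, and the way you propose to handle uniformity --- fixing the deterministic sets $\Gamma_{\delta,s}$ defined for $f$ and controlling $S,R$ through $C^2$-KDE consistency --- matches how the paper carries out the analogous step in the proof of \thmref{prototype_uniform}.

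One point deserves a small clarification rather than a correction. For fixed $(n,h)$, the Mean Shift iterates converge to a mode $x_*[\hat f_{n,h}^\scl]$ of $\hat f_{n,h}^\scl$, not literally to $\gamma_x(\infty)$; what you actually establish (and what the paper's own non-uniform Mean Shift theorem also relies on) is that this mode lies within $\omega(\eta_n)$ of $\gamma_x(\infty)$. The consistency statement is to be read in that asymptotic sense, exactly as in \secref{mean shift}. Relatedly, your ``letting $\eta\searrow 0$'' should be made explicit by choosing $\eta=\eta_n\to 0$ slowly enough that the conclusion for $\Omega_{\eta_n}$ holds for all large $n$; this yields a genuine $(n,h)$-indexed set $\Xi_{n,h}$ with measure tending to one, as required by the statement.
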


\section{Medoid variants}
\label{sec:medoids}

While Euler Shift \cite{fukunaga1975} and Mean Shift \cite{fukunaga1975, cheng1995} were defined as continuous-space algorithms, Max Slope Shift \cite{koontz1976graph} was first introduced as a discrete-space algorithm: Assuming that a density $f$ is provided, and that a (locally finite) set of points $\cY$ is available, a point $x_0$ is moved as follows
\begin{equation} \label{max slope shift}
x(0) = x_0; \quad
x(k+1) \in \argmax_{y \in \cY \cap \ball(x(k), \eps)} \frac{f(y) - f(x(k))}{\|y-x(k)\|}, \quad k \ge 0.
\end{equation}
Ties are broken in some prescribed way, and if $f(x(k+1)) = f(x(k))$, the process stops. 
In \cite{koontz1976graph}, based on a sample, the density is estimated using a kernel, and the point set is the sample itself. 
Note that, defined as such, the algorithm is well-defined and, in particular, does not require regularization. 
Max Shift \cite{chazal2013persistence} --- even though well-defined as a continuous-space algorithm as we did in \eqref{max shift density} --- was also first introduced as a discrete-space algorithm: In the same context as \eqref{max slope shift}, 
\begin{equation} \label{max shift}
x(0) = x_0; \quad
x(k+1) \in \argmax_{y \in \cY \cap \ball(x(k), \eps)} f(y), \quad k \ge 0,
\end{equation}
where, again, the density is estimated in some way, and the point set $\cY$ is the sample itself.

We chose to work with the continuous-space versions of Max Shift and Max Slope Shift for simplicity. But it turns out there are good, practical reasons to work with discrete-space versions (beyond the fact that everything is necessarily discrete when implemented on a computer).
For example, \citet*{sheikh2007mode} introduced a discrete-space variant of Mean Shift, which they named {\em Medoid Shift}, motivated by their view that ``the relationship between the {\sf medoidshift} algorithm and {\sf meanshift} algorithm is similar to the relationship between the {\sf k-medoids} and the {\sf k-means} algorithms":
Based on a kernel $\sck$ and a sample $\cY$, a point $x_0$ is moved as follows
\begin{equation} \label{medoid shift}
x(0) = x_0; \quad
x(k+1) \in \argmin_{y \in \cY} \sum_{y_* \in \cY} \|y_*-y\|^2 \sck(\|y_*-y\|^2/h^2), \quad k \ge 0.
\end{equation}
We adopt this terminology, and from now on refer to a discrete-size version of an algorithm as the medoid version, as in ``medoid Max Shift".

Note that for a medoid algorithm, consistency is also (necessarily) as the medoid set becomes dense enough.

\subsection{Advantages}
One of the advantages highlighted in \cite{sheikh2007mode} is that the algorithm can be easily adapted to the metric setting in which computing a mean may not even make sense --- which is also one of the main advantages of k-Medoids over k-Means. Indeed, all it takes is replacing in \eqref{medoid shift} the Euclidean metric with the available metric. The same is true of the medoid versions of Max Shift and Max Slope Shift. 

There is another advantage that is not discussed in \cite{sheikh2007mode}, but is really a basic principle in optimization: use a coarse discretization, effectively trading some accuracy for some computational complexity. Take Max Shift, for example. When implemented on a computer, the continuous-space maximization that happens at each stage in \eqref{max shift density} needs to be discretized. This is often done by exhaustive search on a grid. (Some form of gradient ascent could be used, but the method would then be near identical to Euler Shift.) To avoid the use of a grid, which becomes quickly impractical in higher dimensions (very quickly in fact, as it is already impractical at $d \ge 5$), we can focus on the sample and simply use the medoid version of Max Shift. But if the sample is very large, some subsampling may help speedup the computations, perhaps substantially.    

In the literature on the mean shift algorithm, a closely related idea appears in work of \citet*{jang2021meanshift++}, would propose {\sf MeanShift{\small\bf ++}}, a variant of Blurring Mean Shift where the points are binned using a regular partition of the space and means are taken over adjacent bins. This variant is shown in numerical experiments to be much faster. 
The authors state that {\sf MeanShift{\small\bf ++}} ``runs in $O(n 3^d)$ per iteration vs $O(n^2 d)$ for [Blurring] {\sf MeanShift}", where $n$ denotes the sample size. 

We claim that consistent clustering can be achieved with a computational cost of $O(n d)$ by using a relatively small subsample as medoid set. We focus on Max Shift, as it is the simplest algorithm among those studied in \secref{algorithms}.


\subsection{Medoid Max Shift}

Consider therefore Max Shift in its medoid form \eqref{max shift}. We assume for now that a density $f$, satisfying the usual assumptions listed in \secref{density}, is available. We assume that the medoid set $\cY$ is finite (or at least locally finite).
Although the starting point can be any point, we start by looking at how a medoid point is moved by the algorithm. Note that the first point computed by the algorithm is necessarily a medoid point (assuming there is a medoid point within $\eps$ of the starting point).

We adapt the arguments given to establish the consistency of Max Shift in \secref{max shift}, and our main tool to do so is simply the triangle inequality. 

Define, for a level $s>0$, 
\begin{align}
\label{alpha}
\alpha_s := \sup_{x \in \up_s} \inf_{y \in \cY} \|x-y\|.
\end{align}
Thus $\alpha_s$ quantifies how dense the medoid set is in the upper $s$-level set. Note that $s \mapsto \alpha_s$ is non-increasing.

\begin{lemma}
\label{lem:medoid max shift level cluster}
Take a level $s>0$ and consider any $2s$-level cluster $\cC$. Then medoid Max Shift initialized at any point in $\cC$ converges (in a finite number of steps) to a (medoid) point in $\cC$ where the gradient has norm bounded by $C (\alpha_s/\eps+\eps)$.
\end{lemma}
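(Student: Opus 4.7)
The plan is to mirror the continuous-space argument of \lemref{max shift level cluster}, with the triangle inequality absorbing the slack created by the finite medoid spacing $\alpha_s$. Throughout I assume that $\eps$ is small enough: specifically $\alpha_s \le \eps/2$, $\eps \le s/\kappa_1$, and $\eps$ is below the minimum separation between $\cC$ and the other $2s$-level clusters, all of which can be arranged by taking $\eps$ below a threshold that depends only on $f$ and $s$.

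\emph{Convergence.} Every iterate past $x_0$ lies in $\cY$, and since the current iterate is itself in $\cY \cap \ball(x(k), \eps)$, the values $(f(x(k)))_{k \ge 1}$ are non-decreasing and are in fact strictly increasing until the stopping criterion triggers. Since $\cY$ is locally finite and $\cC$ is compact, $\cC$ contains only finitely many medoids, so the process must terminate after finitely many steps. The step size bound $\|x(k+1) - x(k)\| \le \eps$ combined with the separation hypothesis, together with the connectedness argument given in the proof of \lemref{max shift level cluster}, confines the iterates to $\cC$ (after a possible initial snap $x_0 \mapsto x(1)$, whose Lipschitz-controlled loss $\kappa_1 \alpha_s$ is absorbed into the hypotheses on $\eps$), and so termination occurs at some $x_\infty \in \cY \cap \cC$.

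\emph{Gradient bound.} Assume $\nabla f(x_\infty) \ne 0$, otherwise the bound is trivial. Termination means $f(y) \le f(x_\infty)$ for every $y \in \cY \cap \ball(x_\infty, \eps)$. The key step is to introduce the probe point
\begin{align}
x^* := x_\infty + (\eps - \alpha_s) N(x_\infty),
\end{align}
which lies in $\up_s$ by \eqref{kappa1}, since $f(x_\infty) \ge 2s$ and $\eps \le s/\kappa_1$. By the definition \eqref{alpha} of $\alpha_s$, there exists a medoid $y^* \in \cY$ with $\|y^* - x^*\| \le \alpha_s$, and the triangle inequality then places $y^*$ inside $\cY \cap \ball(x_\infty, \eps)$, so that $f(y^*) \le f(x_\infty)$. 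Combining the Taylor lower bound \eqref{taylor2} at $x_\infty$ in the direction $N(x_\infty)$ with the Lipschitz estimate \eqref{kappa1} used to pass from $x^*$ to $y^*$ yields
\begin{align}
f(y^*) - f(x_\infty) \ge (\eps - \alpha_s) \|\nabla f(x_\infty)\| - \kappa_1 \alpha_s - \tfrac12 \kappa_2 (\eps - \alpha_s)^2.
\end{align}
Combining this with $f(y^*) \le f(x_\infty)$ and the assumption $\eps - \alpha_s \ge \eps/2$ yields $\|\nabla f(x_\infty)\| \le C(\alpha_s/\eps + \eps)$ for a constant $C$ depending only on $\kappa_1$ and $\kappa_2$.

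The main obstacle is bookkeeping rather than any genuinely new idea: arranging that the probe point $x^*$ falls in $\up_s$ (so that \eqref{alpha} delivers a medoid $y^*$), that $y^*$ lands inside $\ball(x_\infty, \eps)$ (so that the stopping criterion bites on it), and that the two error contributions --- Lipschitz slippage of size $\kappa_1 \alpha_s$ and Taylor curvature of size $\kappa_2 \eps^2$ --- balance against the linear gain $(\eps - \alpha_s) \|\nabla f(x_\infty)\|$ so as to produce precisely the two terms $\alpha_s/\eps$ and $\eps$ in the stated bound.
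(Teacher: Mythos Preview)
Your proof is correct and follows essentially the same route as the paper's: the same smallness assumptions on $\eps$ (namely $\alpha_s \le \eps/2$, $\eps \le s/\kappa_1$, and $\eps$ below the inter-cluster separation), the same finiteness/hill-climbing argument for convergence inside $\cC$, and the same probe-point construction $x^* = x_\infty + (\eps-\alpha_s)N(x_\infty)$ with a nearby medoid $y^*$ to extract the gradient bound via \eqref{taylor2} and \eqref{kappa1}. Your parenthetical about the initial snap $x_0 \mapsto x(1)$ is in fact slightly more careful than the paper, which simply asserts the sequence is hill-climbing from the outset.
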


\begin{proof}
Let $\alpha$ be short for $\alpha_s$. Note that $s$ is fixed, and when we assume that $\alpha$ is small enough, it is simply a condition on $\cY$ being dense enough in $\up_s$.
Because the gradient is globally bounded, it is enough to show the result for $\alpha/\eps$ and $\eps$ small enough. We start by assuming that $\alpha \le \eps/2$ and that $\eps$ is smaller than the minimum separation between $\cC$ and any other $s$-level cluster.
Then, initialized at an arbitrary point $x_0$ in $\cC$, medoid Max Shift  being hill-climbing, necessarily the sequence of medoids it computes must remain in $\cC$. 
Let that sequence be denoted $(y_k)$. 

It is also the case that the sequence of density values $(f(y_k))$ is strictly increasing until convergence, if it is the case that the sequence converges. 
But because there are finitely many medoids in $\cC$, the sequence must converge in finitely many steps, say $K$, and by construction, the endpoint $y_K$ must satisfy $f(y_K) \ge f(y)$ for all $y \in \cY \cap \ball(y_K, \eps)$.  
We claim that
\begin{equation}
\label{medoid max shift convergence proof1}
\|\nabla f(y_K)\|
\le C_1 (\alpha/\eps+\eps).
\end{equation}
Again, we only need to prove this for $\eps$ small enough, and we assume that it satisfies $\eps \le s/\kappa_1$. 
Let $y$ be a medoid closest to $z := y_K + (\eps-\alpha) N(y_K)$. 
By \eqref{kappa1},  
\begin{align}
\label{medoid max shift level cluster proof1}
f(z) 
\ge f(y_K) - \kappa_1 \|z-y_K\| 
\ge 2s - \kappa_1 \eps \ge s,
\end{align}
since $y_K \in \cC$, $\|z-y_K\| \le \eps$, and our assumption on $\eps$. Therefore, $\|y-z\| \le \alpha$ by definition of $\alpha$.
Then, by the triangle inequality, $\|y - y_K\| \le \eps-\alpha + \alpha \le \eps$, so that $f(y_K) \ge f(y)$.
But using \eqref{kappa2}, \eqref{taylor2}, and the definition of $\kappa_1$, in that order, we derive
\begin{align}
0 \ge f(y) - f(y_K)
&\ge f(z) - f(y_K) - \kappa_1 \|y-z\| \\
&\ge \nabla f(y_K)^\top (z-y_K) - \tfrac12 \kappa_2 \|z-y_K\|^2 - \kappa_1 \alpha \\
&\ge (\eps-\alpha) \|\nabla f(y_K)\| - \tfrac12 \kappa_2 \eps^2 - \kappa_1 \alpha \\
&\ge \eps \|\nabla f(y_K)\| - \kappa_1 \alpha - \tfrac12 \kappa_2 \eps^2 - \kappa_1 \alpha,
\end{align}
from which we get that $\|\nabla f(y_K)\| \le 2\kappa_1 \alpha/\eps + \tfrac12 \kappa_2 \eps$, confirming \eqref{medoid max shift convergence proof1}.
\end{proof}

Below, we deviate from \propertyref{1}, but leave to the reader to either establish that property, or simply adapt the arguments in the proof of \thmref{prototype}, which is easily done. 
Indeed, even a cursory look at the proof of that theorem reveals that all that is required is that a nontrivial fraction of the shifts are of size comparable to that of the largest shift, a fact which is then used to bound the total number of shifts from above --- see \eqref{prototype property 1} and its surroundings.
\begin{lemma}
\label{lem:medoid max shift step size}
In two consecutive shifts in any sequence produced by medoid Max Shift, at least one is size between $\eps/2$ and $\eps$, except possibly for the very last shift. 
\end{lemma}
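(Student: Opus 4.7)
The plan is a short contradiction argument using only the triangle inequality and the argmax-defining property of medoid Max Shift. Every shift has size at most $\eps$ by construction, so the only content is the lower bound $\eps/2$.

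Suppose, for contradiction, that there are two consecutive shifts $y_k \to y_{k+1}$ and $y_{k+1} \to y_{k+2}$, neither being the last shift, both of size strictly less than $\eps/2$. Since neither is the last shift, the algorithm has strictly climbed at step $k+2$, so $f(y_{k+2}) > f(y_{k+1})$; in particular $y_{k+2} \ne y_{k+1}$ and $y_{k+2} \in \cY$. The triangle inequality then yields
\begin{align}
\|y_{k+2} - y_k\| \le \|y_{k+2} - y_{k+1}\| + \|y_{k+1} - y_k\| < \eps/2 + \eps/2 = \eps,
\end{align}
so $y_{k+2} \in \cY \cap \ball(y_k, \eps)$. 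But $y_{k+1}$ is, by definition of medoid Max Shift at step $k$, an argmax of $f$ over $\cY \cap \ball(y_k, \eps)$, so $f(y_{k+1}) \ge f(y_{k+2})$, contradicting $f(y_{k+2}) > f(y_{k+1})$.

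Hence at least one of the two consecutive shifts has size at least $\eps/2$, and since all shifts have size at most $\eps$, the claim follows. I do not expect any real obstacle here: the only subtle point is justifying the strict inequality $f(y_{k+2}) > f(y_{k+1})$, which comes from the tie-breaking rule and the stopping condition (the process stops as soon as $f(y_{k+1}) = f(y_k)$), so if shift $k+1$ is not the last then $f$ strictly increased at that step.
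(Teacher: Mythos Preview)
Your argument is correct and essentially identical to the paper's: both use that $y_{k+1}$ maximizes $f$ over $\cY \cap \ball(y_k,\eps)$, so any medoid with a strictly larger $f$-value must lie outside $\ball(y_k,\eps)$, and then the triangle inequality forces the second shift to have size at least $\eps/2$. The paper phrases it directly (if the first shift is $\le \eps/2$ then the next is $> \eps/2$) rather than by contradiction, but the content is the same.
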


\begin{proof}
Suppose that, for some $k$, $\|y_{k+1} - y_k\| \le \eps/2$. If the sequence does not end at $y_{k+1}$ but continues, necessarily $y_{k+2}$ needs to be outside $\ball(y_k, \eps)$ because $f(y_{k+2}) > f(y_{k+1})$ and $y_{k+1}$ is a maximum among medoids in $\ball(y_k, \eps)$. And this forces $\|y_{k+2} - y_{k+1}\| > \eps/2$, by the triangle inequality.
\end{proof}

\begin{lemma}
\label{lem:medoid max shift mode}
Suppose that $x_*$ is a mode and take any level $s$ such that $f(x_*) \ge 2s$. Then there is $\delta > 0$ such that medoid Max  Shift initialized at any point in $\ball(x_*, \delta)$ converges to a (medoid) point within distance $C (\alpha_s/\eps+\eps)$ of $x_*$.
\end{lemma}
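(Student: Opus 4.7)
The plan is to emulate the proof of \lemref{max shift mode} in the medoid setting, with \lemref{medoid max shift level cluster} playing the role of \lemref{max shift level cluster}, and then to convert the resulting gradient bound at the endpoint into a location bound via a Morse estimate at $x_*$. Since $x_*$ is an isolated non-degenerate mode, I would fix $\delta_\ddag>0$ small enough that (i) $x_*$ is the unique critical point of $f$ in $\bar\ball(x_*,\delta_\ddag)$, (ii) $-\nabla^2 f(x)$ is uniformly positive definite on that ball with smallest eigenvalue bounded below by some $\lambda_*>0$, and (iii) $s_\ddag:=\max\{f(x):x\in\partial\ball(x_*,\delta_\ddag)\}$ exceeds $2s$; item (iii) uses $s_\ddag\to f(x_*)\ge 2s$ as $\delta_\ddag\to 0$ (the borderline case $f(x_*)=2s$ is handled by applying the result at a slightly smaller level). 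Just as in the proof of \lemref{max shift mode}, $\cC_{s_\ddag}(x_*)\subset\ball(x_*,\delta_\ddag)$, and fixing some $\Delta\in(0,f(x_*)-s_\ddag)$ and invoking \lemref{cluster ball} at level $s_\ddag+\Delta$ produces $\delta>0$ with $\ball(x_*,\delta)\subset\cC_{s_\ddag+\Delta}(x_*)$.

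Next I take $\eps>0$ smaller than both $(s_\ddag-s)/\kappa_1$ and the separation between $\cC_{s_\ddag}(x_*)$ and every other $s_\ddag$-level cluster, and require the medoid set to be dense enough that $\alpha_s\le\eps/2$ and $\kappa_1\alpha_s\le\Delta$. For any $x_0\in\ball(x_*,\delta)$ we then have $f(x_0)\ge s_\ddag+\Delta$ and $x_0\in\up_s$, so the nearest medoid $y^*$ to $x_0$ lies within $\alpha_s$ of $x_0$ and satisfies $f(y^*)\ge f(x_0)-\kappa_1\alpha_s\ge s_\ddag$. The first iterate $y_1\in\argmax_{y\in\cY\cap\ball(x_0,\eps)}f(y)$ therefore has $f(y_1)\ge s_\ddag$, so $y_1\in\up_{s_\ddag}$, and since $\|y_1-x_0\|\le\eps$ is below the separation of $\cC_{s_\ddag}(x_*)$ from other $s_\ddag$-level clusters, $y_1$ must lie in the same component as $x_0$, namely $\cC_{s_\ddag}(x_*)\subset\bar\ball(x_*,\delta_\ddag)$. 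From that step on the process is strictly hill-climbing with step size at most $\eps$, so the same separation argument keeps every subsequent $y_k$ in $\cC_{s_\ddag}(x_*)$.

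At this point I would apply \lemref{medoid max shift level cluster}, noting that its proof carries through verbatim with the value $s_\ddag$ in place of $2s$: the only role of "$2s$" there is to ensure that the auxiliary point $z=y_K+(\eps-\alpha)N(y_K)$ satisfies $f(z)\ge s$, which is preserved under our choice $\kappa_1\eps\le s_\ddag-s$. This yields convergence, in finitely many steps, to a medoid $y_K\in\bar\ball(x_*,\delta_\ddag)$ with $\|\nabla f(y_K)\|\le C(\alpha_s/\eps+\eps)$ for some constant $C$. A first-order Taylor expansion of $\nabla f$ around $x_*$, using $\nabla f(x_*)=0$ together with (ii), gives $\|\nabla f(y)\|\ge(\lambda_*/2)\|y-x_*\|$ on $\bar\ball(x_*,\delta_\ddag)$ (after shrinking $\delta_\ddag$ once more if needed), and combining the two bounds delivers $\|y_K-x_*\|\le C'(\alpha_s/\eps+\eps)$ for a suitable $C'$, which is the claim.

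The main obstacle is the very first step: unlike in pure Max Shift, the transition from the non-medoid $x_0$ to the medoid $y_1$ can decrease the density, threatening to push the iterate out of $\cC_{s_\ddag}(x_*)$. The cushion $\Delta$, tied to the medoid density via $\kappa_1\alpha_s\le\Delta$, is precisely what absorbs this one-time loss; once this initial bookkeeping is in place, the remainder of the argument runs in parallel with the classical Max Shift one.
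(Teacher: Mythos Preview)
Your proof is correct and follows essentially the same route as the paper: use the non-degeneracy of the mode to obtain a lower bound $\|\nabla f(y)\|\ge\lambda\|y-x_*\|$ on a small ball $\ball(x_*,\delta_1)$, trap the iterates in a leaf cluster $\cC_{s_1}(x_*)\subset\ball(x_*,\delta_1)$ via \lemref{medoid max shift level cluster}, and combine the resulting gradient bound at the endpoint with the Morse estimate. The paper does not introduce your cushion $\Delta$ at all; it simply invokes \lemref{medoid max shift level cluster} directly with $x_0\in\cC_{s_1}(x_*)$, so the possible density drop at the first (non-medoid) step is absorbed into that lemma rather than handled separately. Your extra bookkeeping is harmless and arguably tightens the presentation, but it is not a different argument.
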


Thus, medoid Max Shift satisfies a weaker version of \propertyref{2}, where the convergence is not to a mode, but to a medoid point not too far from a mode.

\begin{proof}
By our assumptions on $f$ in \secref{density}, there exist $\lambda>0$ and $\delta_1 > 0$ such that for all $x\in\ball(x_*,\delta_1)$, all the eigenvalues of $\nabla^2 f(x)$ are bounded from above by $-\lambda$. 
Using the fact that
\begin{align}
\nabla f(x)
&= \nabla f(x) - \nabla f(x_*) \\
&= \int_0^1 \nabla^2 f(u x + (1-u) x_*) (x-x_*) \d u,
\end{align}
we then deduce, whenever $\|x-x_*\| \le \delta_1$, that
\begin{align}
\label{max shift mode proof1}
\|\nabla f(x)\| \ge \lambda \|x-x_*\|.
\end{align}
Note that this implies that there are no other critical points, and therefore no other modes, inside $\ball(x_*, \delta_1)$.

Let $s_1 = \max\{f(x) : x \in \partial\ball(x_*, \delta_1)\}$ and take $\delta_1$ even smaller if needed to have $s_1 \ge s$, which is possible since, by construction, $s_1 < s_* := f(x_*)$, and $s_1$ approaches $f(x_*)$ as $\delta_1$ approaches~0. Note that $\cC_{s_1}(x_*) \subset \ball(x_*, \delta_1)$; and by \lemref{cluster ball}, there is $\delta_2 \le \delta_1$ such that $\ball(x_*, \delta_2) \subset \cC_{s_1}(x_*)$. 

Now take a starting point $x_0 \in \ball(x_*, \delta_2)$ and let $(y_k)$ denote the sequence produced by the algorithm. By \lemref{medoid max shift level cluster}, based on the fact that $x_0 \in \cC_{s_1}(x_*)$, we assert that $(y_k)$ converges to a medoid $y_\infty$ within $\cC_{s_1}(x_*)$ satisfying $\|\nabla f(y_\infty)\| \le C_0 (\alpha_{s_1}/\eps+\eps)$.
Since $y_\infty \in \ball(x_*, \delta_1)$, \eqref{max shift mode proof1} applies, so that we also have $\|\nabla f(y_\infty)\| \ge \lambda \|y_\infty - x_*\|$. Combining these two inequalities, and using the fact that $\alpha_{s_1} \le \alpha_s$, we conclude.
\end{proof}

\begin{lemma}
\label{lem:medoid max shift angle}
Let $(y_k)$ denote the medoid Max Shift sequence originating from some point $x_0$ in $\cU_{2s}$ for some level $s > 0$. 
At each step $k$, except perhaps for the last shift,
\begin{equation}
\label{medoid max shift angle}
y_{k+1} - y_k
= \|y_{k+1} - y_k\| N(y_k) \pm \frac{C_0 (\alpha_s/\eps+\eps)^{3/2}}{\|\nabla f(y_k)\|^{1/2}}, 
\end{equation}
whenever $\nabla f(y_k) \ne 0$.
In particular, medoid Max Shift satisfies \propertyref{3}.
\end{lemma}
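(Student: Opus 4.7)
The plan is to follow the blueprint of the proof of \lemref{max shift angle}, adapted to the medoid setting via the triangle inequality in the spirit of \lemref{medoid max shift level cluster}. Let $u_{k+1} := y_{k+1} - y_k$ and $\eps_{k+1} := \|u_{k+1}\|$. Since medoid Max Shift is hill-climbing and $f(x_0) \ge 2s$, the sequence stays in $\up_{2s}$, so $f(y_k) \ge 2s$ throughout. I would assume $\eps$ small enough that $\alpha_s \le \eps/2$ and $\kappa_1 \eps \le s$, which are the only regularity assumptions needed on $\eps$ relative to $\alpha_s$ and $s$.

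First, I would introduce a comparison point along the gradient direction at the maximum admissible distance. Set $\beta := \eps - \alpha_s$ and $z_{k+1} := y_k + \beta N(y_k)$. By \eqref{kappa1}, $f(z_{k+1}) \ge f(y_k) - \kappa_1 \beta \ge 2s - \kappa_1 \eps \ge s$, so $z_{k+1} \in \up_s$. By definition of $\alpha_s$ in \eqref{alpha}, there is a medoid $y'_{k+1} \in \cY$ with $\|y'_{k+1} - z_{k+1}\| \le \alpha_s$, and then $\|y'_{k+1} - y_k\| \le \beta + \alpha_s = \eps$ by the triangle inequality. Hence $y'_{k+1}$ competes in the argmax defining $y_{k+1}$, giving $f(y_{k+1}) \ge f(y'_{k+1})$. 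Applying the Taylor bound \eqref{taylor2} to $f(y_{k+1})-f(y_k)$ from above and to $f(z_{k+1})-f(y_k)$ from below (and using $f(y'_{k+1}) \ge f(z_{k+1}) - \kappa_1 \alpha_s$), this chain of inequalities collapses to
\begin{align}
\nabla f(y_k)^\top u_{k+1} \ge \beta \|\nabla f(y_k)\| - \kappa_2 \eps^2 - \kappa_1 \alpha_s.
\end{align}

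From here I would execute the Pythagoras step of \lemref{max shift angle} verbatim. Writing $u_{k+1} = a N(y_k) + v_{k+1}$ with $a := N(y_k)^\top u_{k+1}$ and $v_{k+1} \perp N(y_k)$, we have $\|u_{k+1} - \eps_{k+1} N(y_k)\|^2 = (a-\eps_{k+1})^2 + (\eps_{k+1}^2 - a^2) = 2 \eps_{k+1}(\eps_{k+1} - a)$, and the lower bound on $a$ gives $\eps_{k+1} - a \le \alpha_s + (\kappa_2 \eps^2 + \kappa_1 \alpha_s)/\|\nabla f(y_k)\|$. Hence
\begin{align}
\|u_{k+1} - \eps_{k+1} N(y_k)\|^2 \le 2 \eps \big[\alpha_s + (\kappa_2 \eps^2 + \kappa_1 \alpha_s)/\|\nabla f(y_k)\|\big].
\end{align}
I would then homogenize using $\|\nabla f(y_k)\| \le \kappa_1$ (so that $2 \eps \alpha_s \le 2 \kappa_1 \eps \alpha_s/\|\nabla f(y_k)\|$) together with the elementary inequality $(\alpha_s/\eps + \eps)^3 \ge 3 \alpha_s \eps + \eps^3$, producing an envelope of the form $C_0^2 (\alpha_s/\eps + \eps)^3/\|\nabla f(y_k)\|$; taking square roots then yields \eqref{medoid max shift angle}.

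The main obstacle will be this final bookkeeping step, matching the raw sum of error contributions (one $O(\eps \alpha_s)$ term coming from the medoid gap, one $O(\eps^3/\|\nabla f(y_k)\|)$ term inherited from the continuous argument) into the compact form $(\alpha_s/\eps + \eps)^{3/2}/\|\nabla f(y_k)\|^{1/2}$ stated in the lemma. A minor side case arises when $\eps/\|\nabla f(y_k)\| > 1/\kappa_2$ and is handled as in the proof of \lemref{max shift angle} by inflating $C_0$ to make the bound trivial. A small preliminary check is also needed for the very first shift from the possibly non-medoid $x_0$ to $y_1$, but it proceeds identically once one observes that $y_1$ is the global argmax over $\cY \cap \ball(x_0, \eps)$ and that $f(x_0) \ge 2s$ still licenses the construction of $z_1$ and $y'_1$.
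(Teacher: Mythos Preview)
Your proof is correct and follows essentially the same route as the paper: introduce the comparison point $y_k + (\eps-\alpha_s)N(y_k)$, pick a nearby medoid via the definition of $\alpha_s$, combine the Taylor bounds \eqref{kappa1}--\eqref{taylor2} to get a lower bound on $N(y_k)^\top u_{k+1}$, and then finish with the Pythagoras step from \lemref{max shift angle}. Your clean identity $\|u_{k+1}-\eps_{k+1}N(y_k)\|^2 = 2\eps_{k+1}(\eps_{k+1}-a)$ and the explicit cubic bookkeeping $(\alpha_s/\eps+\eps)^3 \ge 3\alpha_s\eps + \eps^3$ are minor presentational variants of what the paper leaves implicit under ``the remaining arguments are also analogous''.
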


\begin{proof}
Let $\alpha$ be short for $\alpha_s$.
Let $u_{k+1} := y_{k+1} - y_k$.
The result comes from comparing $f(y_{k+1})$, which by construction maximizes $f(y)$ over $y \in \cY \cap \ball(y_k, \eps)$, with $f(y'_{k+1})$ where $y'_{k+1}$ is a closest point to $x'_{k+1} : = y_k + (\eps-\alpha) N(y_k)$. The arguments around \eqref{medoid max shift level cluster proof1} apply to give that $\|y'_{k+1} - x'_{k+1}\| \le \alpha$ if $\eps$ is small enough. 
Hence, by the triangle inequality, $\|y'_{k+1} - y_k\| \le \eps$.
By construction of $y_{k+1}$, this implies that $f(y_{k+1}) \ge f(y'_{k+1})$.

Using \eqref{taylor2}, on the one hand we have
\begin{align}
f(y_{k+1})
\le f(y_k) + \nabla f(y_k)^\top u_{k+1} + \tfrac12 \kappa_2 \eps^2,
\end{align}
and on the other hand, with the help of \eqref{kappa1} and the definition of $\kappa_1$, we have
\begin{align}
f(y'_{k+1})
&\ge f(x'_{k+1}) - \kappa_1 \alpha \\
&\ge f(y_k) + (\eps-\alpha) \|\nabla f(y_k)\| - \tfrac12 \kappa_2 (\eps-\alpha)^2 - \kappa_1 \alpha \\
&\ge f(y_k) + \eps \|\nabla f(y_k)\| - C (\alpha+\eps^2).
\end{align}
We thus have 
\begin{align}
\nabla f(y_k)^\top u_{k+1} + \tfrac12 \kappa_2 \eps^2
\ge \eps \|\nabla f(y_k)\| - C_1 (\alpha+\eps^2),
\end{align}
implying
\begin{align}
N(y_k)^\top u_{k+1} 
&\ge \eps - C_2 (\alpha+\eps^2)/\|\nabla f(y_k)\| \\
&\ge \big(1 - C_2 (\alpha/\eps+\eps)/\|\nabla f(y_k)\|\big) \eps_{k+1},
\end{align}
with $\eps_{k+1} := \|u_{k+1}\|$. (Note that we used the fact that $\eps_{k+1} \le \eps$, by \lemref{medoid max shift step size}.) 
This is analogous to \eqref{max shift angle proof1} in the proof of \lemref{max shift angle}, and the remaining arguments are also analogous.
\end{proof}

We have thus established that medoid Max Shift satisfies Properties 1 and 3, and a weaker version of \propertyref{2}. But this is enough, by the same arguments underlying \thmref{prototype}, to show that, given a level $s > 0$, if $\alpha_s/\eps$ and $\eps$ are small enough, when initialized at any point in $\up_{2s}$ that is in the basin of attraction of some mode, medoid Max Shift converges to a medoid point within distance $C (\alpha_s/\eps+\eps)$ of that mode. 
We may thus state that, as $\alpha_s \to 0$ and $\eps \to 0$ in such a way that $\alpha_s/\eps \to 0$, the algorithm converges to that mode. In that sense, we may conclude that 
\begin{center}
{\em Medoid Max Shift is consistent}.
\end{center}

This is for the algorithm. We now discuss the consistency of the method, which is, as usual, defined by applying the algorithm to a KDE $\hat f_{n,h}$:
\begin{equation} \label{max shift method}
\hat x(0) = x_0; \quad
\hat x(k+1) \in \argmax_{y \in \cY \cap \ball(\hat x(k), \eps)} \hat f_{n,h}(y), \quad k \ge 0.
\end{equation}
The (uniform) consistency of this method can be established following the same line of arguments detailed in \secref{methods}, most specifically, \secref{max (slope) shift method}, from which we borrow some notation, and as we did there, we assume that $h\to 0$ and $\log n/(nh^{d+4}) \to 0$ as $n\to \infty$. In addition, we also require that $\alpha_{s_n}/\eps\to 0$ for a sequence of positive numbers $\{s_n\}$ tending to zero as $n\to\infty$. Then there is a measurable set $\Xi_{\eps,n,h}$ with probability measure tending to one as $\eps \to 0$ and $n\to\infty$, such that starting from any $x\in\Xi_{\eps,n,h}$, the sequence produced by the medoid Max Shift method converges to $\gamma_x(\infty)$. Details of proof are omitted.

\section{Discussion}
\label{sec:discussion}

Although we have covered a good amount of territory, there remain some interesting open questions in regards to the consistency of hill-climbing algorithms and methods.  

{\em Blurring Mean Shift.} 
While we have established the consistency of Mean Shift, the behavior of Blurring Mean Shift is not as well understood at the moment, even though some results do exist \citep{cheng1995, carreira2008generalised, carreira2006fast}. Although the Blurring Mean Shift is often seen as a faster version of Mean Shift, we have reasons to speculate that these two approaches are in fact quite distinct.

{\em Quick Shift.}
\citet{vedaldi2008quick} proposed the following hill-climbing algorithm. It is of medoid-type, with the sample being the default medoid set, as usual.
Assuming that the density is available, starting at an arbitrary medoid point, the algorithm iteratively moves to the closest medoid point within a certain neighborhood radius whose density is strictly larger than the current value, or in formula,
\begin{equation} \label{quick shift}
x(0) = x_0; \quad
x(k+1) \in \argmin \big\{\|y - x(k)\| : y \in \cY \cap \ball(x(k), \eps) \text{ with } f(y) > f(x(k))\big\}, \quad k \ge 0.
\end{equation}
A population analog of quick shift is not straightforward to define as, by continuity of the density, there is no closest point in the population within distance $h$ whose density value is strictly larger. 
\citet{NIPS2017_f457c545} showed that Quick Shift can be used for a variety of tasks, including finding density modes, and in subsequent work, \citet{jiang2018quickshift++} studied the consistency properties of this algorithm for the task of clustering. The latter is done for an initialization in some subset of the basin of attraction of a mode that, to quote the authors, ``satisfy the property that any path leaving [this region] must sufficiently decrease in density at some point". As is readily seen, in dimension $d \ge 2$, under the existence of a saddle point, this restricts the initialization to leaf clusters --- incidentally, the same restriction as in the study of Max Shift in \cite{chazal2013persistence}.
As far as we know, proving (or disproving) the consistency of Quick Shift over the entire basins of attraction remains an open problem.



\subsection*{Acknowledgments}
We are grateful to Melvin Leok for some pointers to the literature on ODEs.  
This work was partially supported by an NSF grant (DMS 1821154).

\bibliographystyle{chicago}
\bibliography{../ref}

\appendix

\section{Auxiliary results}
\label{sec:auxiliary}

\paragraph{Proof of \lemref{full continuity}}
\begin{proof}
We use the same notation as used in the proof of \thmref{prototype}, except that we make the dependence on the starting point $x$ explicit whenever needed as in, e.g., $t_\#(x)$ denoting $t_\#$ when associated with $x$. 
Let $\cA_*$ be the basin of attraction associated with a mode $x_*$, that is, $\cA_* = \{x\in\bbR^d: \lim_{t\to\infty} \gamma_x(t) = x_*\}$. 
With the notation $\cZ(x) := \zeta_x([0,\ell_x])$, where $\ell_x$ is the length of $\gamma_x$, we only need to show that for every $x\in\cA_*$, 
\begin{align}
\label{Z continuity}
\haus(\cZ(x), \cZ(y)) \to 0, \quad \text{as } \|y-x\| \to0.
\end{align}

By \lemref{cluster ball}, we can take $\delta_*>0$ small enough that there exist $\lambda>0$ and $c_*\ge1$ such that $\ball(x_*,c_*\delta_*)\subset\cA_*$, and all the eigenvalues of $\nabla^2 f(x)$ for all $x\in\ball(x_*,c_*\delta_*)$ are upper bounded by $-\lambda$, and $\inf_{x\in\bar\ball(x_*,\delta_*/3)} f(x) > \sup_{x\in\partial\ball(x_*,c_*\delta_*)} f(x)$.

For any point $x\in\cA_*\setminus \ball(x_*,\delta_*/3)$, define $t_\diamond(x) := \inf\{t \ge 0 : \|\zeta_x(t) - x_*\| = \delta_*/6\}$, and
$
\mu(x) := \frac12 \min\{\|\nabla f(z)\| : z \in \cZ_{t_\diamond(x)}(x)\}
$. Notice that $\|\nabla f(y)\|\ge \nu(x)$ for all $y\in\cS(x) :=\ball(\cZ_{t_\#(x)}(x), \delta_{\rm tube}^\prime)$, where $\delta_{\rm tube}^\prime:=\mu(x)/\kappa_2$ by \eqref{kappa2}. Similarly, by using \eqref{DN},
\begin{equation}
\sup_{y\in\cS(x)}\|DN(y)\| \leq \frac{\kappa_2}{\nu(x)} + \kappa_2 .
\end{equation}
%
Define $\cV(x):=\ball(\cZ_{t_\#(x)}(x), \delta_{\rm tube}^\prime/2)$, and notice that $\cS(x) = \ball(\cV(x), \delta_{\rm tube}^\prime/2)$. 
If $y,z \in \cV(x)$ are such that $\|y-z\| \le \delta_{\rm tube}^\prime/4$, then $z \in \ball(y, \delta_{\rm tube}^\prime/2)\subset\cS(x)$, and because that ball is convex, we have
\begin{equation}
\label{N_Lipschitz_1}
\|N(y) - N(z)\| 
\le \Big(\frac{\kappa_2}{\nu(x)} + \kappa_2\Big) \|y-z\|.
\end{equation}
If, on the other hand, $\|y-z\| > \delta_{\rm tube}^\prime/4$, then we can simply write
\begin{equation}
\label{N_Lipschitz_2}
\|N(y) - N(z)\| 
\le 2
= \frac{2}{\delta_{\rm tube}^\prime/4} \delta_{\rm tube}^\prime/4
\le \frac{8}{\delta_{\rm tube}^\prime} \|y-z\|.
\end{equation}
Hence, $N$ is $\kappa$-Lipschitz on $\cV(x)$, where $\kappa:= \max\{\kappa_2/\nu(x) + \kappa_2, 8/\delta_{\rm tube}^\prime\}$. 

Take a positive constant $\delta_\diamond \le \frac{1}{2}\exp\{-\kappa t_\diamond(x)\}\delta_{\rm tube}^\prime.$ 
Suppose that
\[
\cH(x) := \big\{\zeta_y(\tau): \tau \in [0,t_\diamond(x)], y \in \ball(x,\delta_\diamond)\big\}  \nsubset \cV(x).
\] 
Then there exist $y\in\ball(x,\delta_\diamond)$ and an escaping time $t_\square\in (0,t_\diamond(x))$ such that $\zeta_y(t_\square)\in\partial \cV(x)$ and $\{\zeta_y(\tau): \tau \in [0,t_\square)\} \subset \cV(x)$. This is impossible because applying a standard result on the dependence of the gradient flow on the initial condition, for example, the main theorem in \citep[Sec 17.3]{hirsch2012differential}, we have
\begin{equation} \label{hirsch_initial}
\|\zeta_x(t_\square) - \zeta_y(t_\square)\|
\le \|x-y\| \exp(\kappa t_\square) < \frac{1}{2} \delta_{\rm tube}^\prime,
\end{equation}
which would lead to $\zeta_y(t_\square) \in \cV(x)$, a contradiction against the definition of $t_\square$ as $\cV(x)$ is an open set. 
Therefore we must have $\cH(x) \subset \cV(x)$, and we can use the main theorem in \citep[Sec 17.3]{hirsch2012differential} to obtain
\begin{equation}
\label{initial_continuous}
\|\zeta_x(t) - \zeta_y(t)\|
\le \|x-y\| \exp(\kappa t) \le \frac{1}{2} \delta'_{\rm tube}, \quad \forall t \in [0,t_\diamond(x)], \quad \forall y \in \ball(x,\delta_\diamond).
\end{equation}

We further require that 
\begin{align}
\label{delta_diamond}
\delta_\diamond \le \frac{1}{6} \delta_* \exp(-\kappa t_\diamond(x)).
\end{align}
Then by the first inequality in \eqref{initial_continuous}, $\|\zeta_x(t_\diamond(x)) - \zeta_y(t_\diamond(x))\| \le \frac{1}{6}\delta_*$, and hence 
\[
\|\zeta_y(t_\diamond(x)) - x_*\| \le \|\zeta_x(t_\diamond(x)) - \zeta_y(t_\diamond(x))\| + \|\zeta_x(t_\diamond(x)) - x_*\| \le \frac{1}{3} \delta_*.
\]
This implies that $t_\#(y) \le t_\diamond(x)$, by definition of $t_\#(\cdot)$.

For $y \in \ball(x,\delta_\diamond)$, without loss of generality, suppose that $t_\#(y)\ge t_\#(x)$. Notice that 
\begin{equation}
\cZ_{t_\#(y)}(y) = \cZ_{t_\#(x)}(y) \,\bigcup\, \big\{\zeta_{y}(t): \; t\in[t_\#(x),t_\#(y)]\big\}.
\end{equation}
With notation $\phi = \sup_{t\in[t_\#(x),t_\#(y)]} \|\zeta_{y}(t) - \zeta_{y}(t_\#(x))\|$, we can write
\begin{align}
\label{haus_dist}
\haus(\cZ_{t_\#(x)}(x), \cZ_{t_\#(y)}(y)) 
&\le  \haus(\cZ_{t_\#(x)}(x), \cZ_{t_\#(x)}(y) ) + \phi \\
&\le  \sup_{t\in[0,t_\#(x)]}\|\zeta_x(t) - \zeta_{y}(t)\| + \phi\\
&\le  \|x-y\| \exp\{\kappa t_\#(x)\} + \phi, \label{haus_dist_last}
\end{align}
where the last equality is a result of \eqref{initial_continuous}.

For any $z\in\ball(x_*,c_*\delta_*)$ with $z \ne x_*$, using a Taylor expansion about $z$, we obtain
\begin{align}
0<f(x_*) - f(z) \le (x_* - z)^\top \nabla f(z) - \frac12 \lambda \|x-x_*\|^2.
\end{align}
Hence $(x_* - z)^\top \nabla f(z) > \frac12 \lambda \|z-x_*\|^2$. Denote $\xi_x(t) = \|\zeta_x(t) - x_*\|^2$. Then, for all $t\in[t_\#(x),\ell_x)$, where $\ell_x$ is the length of $\gamma_x$, we have $\zeta_x(t)\in\ball(x_*,c_*\delta_*)$, and
\begin{align}
\label{xidot}
\dot \xi_x(t) = 2(\zeta_x(t) - x_*)^\top \dot\zeta_x(t) = 2\frac{(\zeta_x(t) - x_*)^\top\nabla f(\zeta_x(t))}{\|\nabla f(\zeta_x(t))\|} 
< - \lambda \frac{\|\zeta_x(t) - x_*\|^2}{\|\nabla f(\zeta_x(t))\|} <0.
\end{align}
In other words, for all $t\in[t_\#(x),\ell_x)$, $\zeta_x(t)$ stays in $\ball(x_*,\delta_*/3)$ and its distance to $x_*$ strictly decreases as $t$ increases. Since $t_\#(y)\in[t_\#(x),t_\diamond(x)]$, we have
\begin{align}
\xi_x(t_\#(y)) & = \xi_x(t_\#(x)) + \int_{t_\#(x)}^{t_\#(y)} \dot \xi_x(s) \d s \\
& \le \frac{1}{9}\delta_*^2 - \lambda \int_{t_\#(x)}^{t_\#(y)} \frac{\|\zeta_x(s)-x_*\|^2}{\|\nabla f(\zeta_x(s))\|} \d s \\
& \le \frac{1}{9}\delta_*^2 -  \frac{\lambda}{\kappa_1} (\delta_*/3)^2 (t_\#(y)-t_\#(x)) \\
&=: \frac{1}{9}\delta_*^2 -  C_1(t_\#(y)-t_\#(x)).
\end{align}
Noticing that by \eqref{initial_continuous} and \eqref{delta_diamond},
\begin{align}
\|\zeta_y(t_\#(y)) - \zeta_x(t_\#(y))\| \le \frac{1}{6}\delta_*  < \frac{1}{3}\delta_* = \|\zeta_y(t_\#(y)) - x_*\|, 
\end{align}
and using the triangle inequality, we have
\begin{align}
\xi_x(t_\#(y)) &= \|\zeta_x(t_\#(y)) - x_*\|^2 \\
&\ge \Big( \|\zeta_y(t_\#(y)) - x_*\| - \|\zeta_y(t_\#(y)) - \zeta_x(t_\#(y))\| \Big)^2 \\
&\ge \Big( \delta_*/3 - \|x-y\| \exp(\kappa t_\#(y)) \Big)^2 \\
& = \frac{1}{9}\delta_*^2 - \frac{2}{3}\delta_* \exp ( \kappa t_\#(y)) \|x-y\| + \exp(2\kappa t_\#(y)) \|x-y\|^2\\
& \ge \frac{1}{9}\delta_*^2 - \Big\{\frac{2}{3}\delta_* \exp ( \kappa t_\diamond(x)) - \delta_\diamond \exp(2 \kappa t_\#(x))\Big\}\|x-y\| \\
& = : \frac{1}{9}\delta_*^2 - C_2(x) \|x-y\|.
\end{align}
Note that by \eqref{delta_diamond},
\begin{align}
C_2(x) \ge 4 \delta_\diamond \exp(2 \kappa t_\diamond(x)) - \delta_\diamond \exp(2 \kappa t_\#(x)) >0.
\end{align}
We thus obtain $t_\#(y)-t_\#(x) \le (C_2(x)/C_1) \|x-y\|$. 
Hence, using the fact that $\zeta_y$ is parameterized by arc length,
\begin{align}
\phi
&= \sup_{t\in[t_\#(x),t_\#(y)]} \Big\|\int_{t_\#(x)}^t \dot\zeta_y(t) \d t \Big\| \\
&\le t_\#(y) - t_\#(x) \\
&\le (C_2(x)/C_1) \|x-y\|.
\end{align}

Combining this with \eqref{haus_dist_last}, we obtain
\begin{equation}
\label{hausdorff_continuity}
\haus(\cZ_{t_\#}(x), \cZ_{t_\#}(y)) \le C_3(x)\|x-y\|,
\end{equation}
where $C_3(x) = \exp\{\kappa t_\#(x)\} + C_2(x)/C_1$. 

As shown in \eqref{xidot}, once a gradient flow enters $\ball(x_*,\delta_*/3)$, it never escapes from this ball. Using \eqref{hausdorff_continuity}, we have $\haus(\cZ(x), \cZ(y))\le \delta_*$ when $\|x-y\|$ is small enough. Since $\delta_*>0$ can be made arbitrarily small, we then obtain \eqref{Z continuity}, which gives the conclusion of the lemma.
\end{proof}

\begin{lemma}
\label{lem:nu_continuity}
$\nu$ is a continuous function on $\cA$.
\end{lemma}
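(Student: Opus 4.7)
The plan is to fix an arbitrary $x_0 \in \cA$ and show $\nu(y) \to \nu(x_0)$ as $y \to x_0$. Let $x_*$ be the mode with $\gamma_{x_0}(\infty) = x_*$, and let $\delta_*$ be the constant from \propertyref{2}; shrinking $\delta_*$ if needed, I may assume that the largest eigenvalue of $\nabla^2 f$ is bounded above by some $-\lambda < 0$ throughout $\ball(x_*, \delta_*)$. The argument decomposes naturally into three steps: (i) continuity of the first-hitting time $x \mapsto t_\#(x)$; (ii) continuity of the truncated gradient line $x \mapsto \cZ_{t_\#(x)}(x)$ in the Hausdorff metric; (iii) continuity of the minimum of $\|\nabla f\|$ over a compact set as that set varies in the Hausdorff metric.

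For step (i), I would first dispatch the easy case $\|x_0 - x_*\| \le \delta_*/3$ (where $t_\#(x_0) = 0$) via a short ODE argument and then concentrate on the main case $\|x_0 - x_*\| > \delta_*/3$. The key here is to establish transversality of $\zeta_{x_0}$ across $\partial \ball(x_*, \delta_*/3)$ at time $t_\#(x_0)$. A second-order Taylor expansion of $f$ about any $z \in \bar\ball(x_*, \delta_*) \setminus \{x_*\}$, combined with the Hessian bound, yields $(z - x_*)^\top \nabla f(z) > 0$ (exactly as in \eqref{xidot}), so that $\frac{d}{dt}\|\zeta_{x_0}(t) - x_*\|^2 < 0$ at $t = t_\#(x_0)$. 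From this I would extract $\eta, a > 0$ with $\|\zeta_{x_0}(t) - x_*\| \ge \delta_*/3 + a$ on $[0, t_\#(x_0) - \eta]$ and $\|\zeta_{x_0}(t_\#(x_0) + \eta) - x_*\| \le \delta_*/3 - a$. Invoking the standard continuous dependence of ODE solutions on initial data \citep[Sec 17.3]{hirsch2012differential} applied to $\dot\zeta = N(\zeta)$ on $[0, t_\#(x_0) + \eta]$, the analogous inequalities with $a/2$ in place of $a$ remain valid with $\zeta_y$ in place of $\zeta_{x_0}$ for all $y$ in a sufficiently small ball around $x_0$, forcing $|t_\#(y) - t_\#(x_0)| \le \eta$.

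For step (ii), I would combine continuity of $t_\#$ with uniform closeness of $\zeta_y$ to $\zeta_{x_0}$ on $[0, t_\#(x_0)]$ (again by continuous dependence) and the fact that any portion of $\zeta$ has length equal to the elapsed time (unit-speed parameterization). The extra or missing piece between $\cZ_{t_\#(y)}(y)$ and $\cZ_{t_\#(x_0)}(y)$ then has diameter at most $|t_\#(y) - t_\#(x_0)|$, and a triangle-inequality argument in the Hausdorff metric produces $\haus(\cZ_{t_\#(y)}(y), \cZ_{t_\#(x_0)}(x_0)) \to 0$. Step (iii) is a standard Hausdorff-metric fact: if compact sets $K_n \to K$ in Hausdorff distance and $g : \bbR^d \to \bbR$ is continuous, then $\min_{K_n} g \to \min_K g$; applied to $g = \|\nabla f\|$ and $K_n = \cZ_{t_\#(y_n)}(y_n)$, this yields $\nu(y_n) \to \nu(x_0)$.

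The main obstacle is step (i), specifically the transversality verification at $t_\#(x_0)$. This is where the Hessian negativity on $\ball(x_*, \delta_*)$ and the choice of $\delta_*$ enter in an essential way; once transversality is in hand, steps (ii) and (iii) are routine applications of continuous dependence and elementary properties of the Hausdorff metric.
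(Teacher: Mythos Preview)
Your proposal is correct and follows the same route as the paper: Hausdorff continuity of the truncated gradient line $x \mapsto \cZ_{t_\#(x)}(x)$ (your steps (i)--(ii), which the paper establishes quantitatively as the local Lipschitz bound \eqref{hausdorff_continuity} within the proof of \lemref{full continuity}, using the same transversality computation \eqref{xidot} you invoke), combined with the Lipschitz property of $\|\nabla f\|$ to pass from Hausdorff closeness to closeness of the minimum (your step (iii), the paper's $d_\nabla$ inequality). The only differences are cosmetic: the paper obtains a local Lipschitz estimate on $\nu$ rather than bare continuity, and its proof of the lemma is short because the substance of your steps (i)--(ii) is outsourced to the proof of \lemref{full continuity}.
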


\begin{proof}
Noticing that $\cA_*$ is an open set, and $\cA$ is the union of all the $\cA_*$'s, we only need to show $\nu$ is continuous $\cA_*$. Without loss of generality, we again assume that $\delta_*$ in Property 2 is small enough that the same assumption on $\delta_*$ in the proof of \lemref{full continuity} holds.

Note that $\nu(x) = \frac{1}{2}\|\nabla f(x)\|$ for any $x\in\bar\ball(x_*,\delta_*/3)$. It is clear that $\nu$ is continuous on $\bar\ball(x_*,\delta_*/3)$, and thus we only need to show that $\nu$ is continuous on $\cA_*\setminus \ball(x_*,\delta_*/3)$. 

By denoting $d_\nabla(\cA \mid \cB) = \sup_{v\in \cA} \; \inf_{w\in \cB} | \|\nabla f(v)\| - \|\nabla f(w)\| |$, it follows from \eqref{hausdorff_continuity} that
\begin{align}
&|\nu(x) - \nu(y)| \\
&\le  \max\{ d_\nabla(\cZ_{t_\#}(x) \mid \cZ_{t_\#}(y)),\; d_\nabla(\cZ_{t_\#}(y) \mid \cZ_{t_\#}(x)) \} \\
&\le  \kappa_2 \haus(\cZ_{t_\#}(x), \cZ_{t_\#}(y)) \\
&\le  C_4(x)\|x-y\|,
\end{align}
where $C_4(x)= \kappa_2C_3(x)$. We have thus shown that $\nu$ is a continuous function on $\cA$. 
\end{proof}

\end{document}